\documentclass{article}

\usepackage[english]{babel}

\usepackage[letterpaper,top=2cm,bottom=2cm,left=3cm,right=3cm,marginparwidth=1.75cm]{geometry}

\usepackage{amsmath}
\usepackage{amsthm}
\usepackage{graphicx}
\usepackage[colorlinks=true, allcolors=blue]{hyperref}
\usepackage{amssymb}
\usepackage{tikz}
\usetikzlibrary{positioning}
\usepackage{cmap}
\usepackage[utf8]{inputenc}
\usepackage[T1]{fontenc}
\usepackage{tcolorbox}
\usepackage{enumitem}
\usepackage{multicol}
\usepackage{caption}
\renewcommand{\arraystretch}{1.5} 
\usepackage[table]{xcolor} 
\usepackage{array} 
\usetikzlibrary{arrows}
\usetikzlibrary{arrows.meta}
\usepackage{subcaption}

\usepackage{svg}

\newtheorem{theorem}{Theorem}[section]
\newtheorem{corollary}[theorem]{Corollary}

\newtheorem{lemma}[theorem]{Lemma}

\usepackage{xcolor}

\newcommand{\R}[0]{\mathbb{R}}
\newcommand{\Pb}[0]{\mathbb{P}}

\newcommand{\Sb}[0]{\mathbb{S}}

\newcommand{\diag}[0]{\mathtt{diag}}

\newcommand{\gr}{\mathtt{gr}}
\newcommand{\id}{\mathtt{id}}

\title{Geometric Realizations with Strong Self-Duality \\[0.5em] \large Part I: Non-Bipartite Quadrangulations of the Projective Plane and Strongly Involutive Self-Dual Graphs}

\author{Gábor Damásdi\footnote{Supported by the ERC Advanced Grant no. 882971 GeoScape, by the János Bolyai Research Scholarship of the Hungarian Academy of Sciences, the STARTING-25 project no. 152199 and the EXCELLENCE-24 project no. 151504 Combinatorics and Geometry of the NRDI Fund.} \\ \small HUN-REN Alfr\'ed R\'enyi Institute of Mathematics,\\ \small ELTE Eötvös Loránd University, Budapest }

\begin{document}
\maketitle
\begin{abstract}
   In this series of articles, we study two graph classes and their various geometric representations. The first class consists of strongly involutive self-dual graphs (SISD graphs); the second consists of non-bipartite quadrangulations of the projective plane (NBQP graphs). These classes naturally arise in many geometric problems. For example, NBQP graphs are connected to diameter graphs in $\mathbb{R}^3$, tangency graphs of families of pairwise intersecting circles and pseudocircles, generalized thrackles, and graphs formed by the main diagonals of negatively self-polar polytopes. On the other hand, SISD graphs correspond to the skeletons of negatively self-polar polytopes, self-dual cones, and extremal ball-polytopes. 
    
    In Part I we show a natural one-to-one correspondence between non-bipartite quadrangulations of the projective plane and 2-connected strongly involutive self-dual maps. In particular, we show that the reduced vertex-face incidence structure of a 2-connected strongly involutive self-dual map is a non-bipartite quadrangulation of the projective plane, and every non-bipartite quadrangulation of the projective plane arises that way. This allows us to translate natural properties from one class to the other. These results lay the foundation for later parts of the series. 
\end{abstract}
\section{Introduction for the series}
One of the most joyful forms of discovery is when we find connections between seemingly different areas of life or mathematics. Bronowski writes about an interesting case of such discovery \cite{bronowski1956}. The native Sherpas, living on the slopes of Mount Everest, knew different faces of the mountain very well, but did not realize that they were looking at the same mountain. They even had different names for the same mountain. 

In a series of papers, we present a similar mountain in the world of discrete geometry. Without going into the technical details, here are some of the conjectures and theorems in which the peak of the mountain was spotted from a distance.

\begin{theorem}[Grünbaum's conjecture, solved by Ackerman et al. \cite{pseudodamasdi}]\label{thm:listgr}
    Every pairwise intersecting family of (pseudo) disks determines at most $2n-2$ touchings.
\end{theorem}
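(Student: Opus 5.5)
We would follow the strategy of Ackerman et al.\ and first reduce to a purely combinatorial statement about the tangency graph. Let $\mathcal{F}$ be a pairwise intersecting family of $n$ pseudo-disks, and let $G$ be the graph on $\mathcal{F}$ whose edges are the touching pairs. After a small perturbation we may assume the family is in general position: no three boundaries meet in a point, and every non-touching intersection is a proper crossing of the boundaries in exactly two points. The aim is to show $|E(G)|\le 2n-2$. We would get this from an embedding of $G$, possibly after adding edges, as a subgraph of a quadrangulation of the projective plane. A quadrangulation of $\Pb^2$ with $n$ vertices has exactly $2n-2$ edges, since Euler's formula gives $n-e+f=1$ and $4f=2e$. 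So it suffices to embed $G$ in $\Pb^2$ so that every face has length at least $4$.

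To build the embedding, for each touching pair we draw an edge through the tangency point, joining chosen interior points of the two pseudo-disks. The pairwise intersecting property should force the drawing to behave like a thrackle on the sphere: any two edges with no common endpoint must ``cross'' in a parity sense. The reason is as follows.
\begin{itemize}[nosep]
\item If $A$ touches $B$ and $C$ touches $D$, the four pseudo-disks pairwise intersect.
\item A case analysis on how $\partial C\cup\partial D$ meets $A\cup B$ then shows that the curves $AB$ and $CD$ cross an odd number of times.
\end{itemize}
Next we identify antipodal points, that is, we lift the drawing to the double cover $\Sb^2\to\Pb^2$. Pairs of edges that cross oddly then become disjoint. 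This gives an embedding of $G$ in $\Pb^2$ in which every cycle that bounds a disk has even length, and in particular $G$ has no triangular faces. This is the standard correspondence between generalized thrackles and graphs embeddable in $\Pb^2$ with the same parity of cycles.

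Assuming we can show every face has length at least $4$, the count is routine. We may assume $G$ is connected; otherwise we add edges between components inside faces, which only helps. If each face has length at least $4$, then $4f\le 2e$. Euler's formula on $\Pb^2$ gives $n-e+f\ge 1$. Combining these yields $e\le 2n-2$.

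The main obstacle is the parity lemma for touching pairs together with its extension to pseudo-disks. For pseudo-disks we cannot use convexity. Instead, we would use only the facts that two boundaries meet at most twice and that the touching points are isolated. The edge $AB$ would be drawn as a path inside $A\cup B$ through the touching point. For disjoint pairs $\{A,B\}$ and $\{C,D\}$, we would then count crossings of $AB$ with $\partial C$ and $\partial D$ modulo $2$, using that the endpoints of $AB$ lie in $A\cap C$-type regions in a controlled way. We also have to rule out faces of length $2$, which arise from parallel edges, and faces bounded by a non-contractible cycle of odd length. For these we would invoke the non-bipartite quadrangulation viewpoint of the paper: odd cycles are exactly the non-contractible ones. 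Adding edges to complete $G$ to an NBQP graph then gives the bound.
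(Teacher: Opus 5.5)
The paper does not prove this theorem. It is quoted from Ackerman et al.\ as motivation, so your argument has to stand on its own. The endgame is fine. If the touching graph $G$ has a drawing in which every two edges meet an odd number of times, then \cite{CN00} already gives $|E(G)|\le 2n-2$; alternatively, pass to a parity embedding in $\Pb^2$ and apply Lemma~\ref{lem:2nminus2isquad}.

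The gap is that this thrackle property is the whole content of the theorem, and you never establish it. The promised ``case analysis'' is not given, and for the natural drawing the claim is false. Let $A,B$ be disks of radius $100$ centred at $(\mp 100,0)$, touching at the origin. Let $C,D$ be disks of radius $0.3$ centred at $(0,1)$ and $(0,1.6)$, touching at $(0,1.3)$. All six pairs intersect, yet the centre-to-centre edges $AB\subset\{y=0\}$ and $CD\subset\{x=0,\ 1\le y\le 1.6\}$ are disjoint.

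Nor is the parity determined by the family itself. If the chosen point $c$ of $C$ lies in $A$, rerouting the $A$-half of $AB$ once around $c$ flips the parity of $AB$ with every edge at $C$. So you need a globally coordinated choice of vertex points and routes, together with a proof that it works. Your suggested tool does not supply this: counting crossings of $AB$ with $\partial C$ and $\partial D$ mod $2$ only detects whether $a,b$ lie in $C$ or $D$, not how $AB$ crosses the curve $CD$.

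Several further ingredients are missing.
\begin{enumerate}
\item A generalized thrackle also needs adjacent edges to cross an even number of times, and \cite{CN00} relies on this. Your argument never considers adjacent pairs; for instance, $AB$ and $AC$ may cross inside $B\cap C$.
\item Your general-position step assumes that any two non-touching members have crossing boundaries. In a pairwise intersecting family of pseudo-disks, one member can contain another with disjoint boundaries, and touchings can be internal. These cases are not treated.
\item The step ``identify antipodal points, that is, lift to the double cover, so that oddly crossing edges become disjoint'' does not describe a valid operation. Neither lifting nor quotienting turns a drawing into an embedding. The passage from a generalized thrackle to a parity embedding in $\Pb^2$ is a genuine redrawing theorem of Cairns and Nikolayevsky. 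You may cite it, but only once the full thrackle condition has been proved.
\end{enumerate}
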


\begin{theorem}[Vázsonyi problem \cite{ERD46}, solved by Grünbaum, Heppes, and Straszewicz \cite{GRU56,HEP56, STR57}]\label{thm:listvazsony}
   Every finite point set in $\R^3$ determines at most $2n-2$ diameters.  
\end{theorem}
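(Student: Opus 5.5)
The plan is the classical Hopf--Grünbaum--Heppes--Straszewicz argument: reduce the diameter graph to a configuration on a sphere of radius $D$, and then count via Euler's formula. Let $P$ be a set of $n$ points in $\R^3$ with diameter $D$, and let $G$ be the diameter graph, where $pq$ is an edge iff $|pq|=D$. We may delete isolated vertices, since this only helps. We then show $|E(G)|\le 2n-2$ by induction on $n$, combined with a direct argument when every vertex has degree at least $2$.

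\textbf{Step 1 (degree-1 reduction).} If some vertex has degree at most $1$, we remove it and apply induction; this loses at most one edge, and $2(n-1)-2+1\le 2n-2$. Hence we may assume every vertex has degree at least $2$.

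\textbf{Step 2 (spherical arcs).} For each vertex $p$, all its neighbours lie on the sphere $S_p$ of radius $D$ around $p$. We take the ball polytope $B=\bigcap_{p\in P} B(p,D)$; each neighbour $q$ of $p$ is a vertex of $B$ lying on $S_p$. Around each $q$ we join its neighbours $p_1,\dots,p_k$ by short geodesic arcs on the sphere $S_q$. The key geometric facts to establish are:
\begin{itemize}
\item[(a)] any two diameter segments $pq$ and $p'q'$ either share an endpoint or are ``linked'', meaning they cannot be separated, because all pairwise distances are at most $D$;
\item[(b)] consequently the neighbours of $q$ lie on a circle-like arrangement of diameter at most $D$ on $S_q$, with angular diameter at most $60^\circ$.
\end{itemize}

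\textbf{Step 3 (planar graph).} The cleanest route I would follow is Dol'nikov's formulation. Consider the graph $H$ on the set $P$ in which, for each $q$, we draw on a common sphere the polygon formed by $q$'s neighbours, ordered cyclically. Using (a), one shows the curves pairwise cross an odd number of times or share endpoints, so $G$ is a generalized thrackle. By the theorem of Cairns--Nikolayevsky, a generalized thrackle on $n$ vertices is realizable as a quadrangulation-type embedding in the projective plane. Euler's formula on $\Pb^2$ (Euler characteristic $1$) for a graph whose faces have length at least $4$ then gives $e\le 2n-2$. Concretely: with $f$ faces and $2e\ge 4f$, we have $n-e+f\ge 1$, so $e\le 2(n-1)$.

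\textbf{Main obstacle.} The hardest step is (a)/Step 3: proving that two disjoint diameter segments must ``cross oddly'' in a suitable projection. I would try first Kupitz--Perles-style reasoning. Project radially from the center region onto a sphere, mapping $pq$ to the pair of antipodal-ish arcs. Then use that any two diameters $pq$ and $p'q'$ satisfy $|pp'|,|pq'|,|qp'|,|qq'|\le D$. This forces the four points to be in ``linked'' position, since otherwise one of the cross-distances would exceed $D$ by a simple convexity (parallelogram-inequality) computation. If this fails, the fallback is the original Grünbaum/Heppes route. There, one shows directly that the Reuleaux-type ball polytope $B$ has a self-dual face structure in which diameters correspond to edge pairs, and then one counts edges of $B$ with Euler's formula on the sphere, with each diameter giving one edge of $B$ after halving.
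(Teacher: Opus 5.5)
The paper gives no proof of this statement. It is quoted as the classical theorem of Gr\"unbaum, Heppes and Straszewicz, with diameter graphs deferred to Part II, so your outline has to stand on its own. The only overlap is your closing Euler count, which is Lemma~\ref{lem:2nminus2isquad}. That count needs every face to have length at least $4$ (a triangulation of $\Pb^2$ has $3n-3$ edges), and only the parity property rules out triangles, as in the proof of Lemma~\ref{lem:maximal}.

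\textbf{The gap.} It sits exactly at the step you flag as the main obstacle: nothing in the proposal turns the configuration in $\R^3$ into a single generalized-thrackle drawing or parity embedding of $G$.
\begin{itemize}
\item Claim (a) is false as stated. Two vertex-disjoint diameters that do not meet are disjoint compact convex sets, hence strictly separable by a plane. Two opposite edges of a regular tetrahedron of side $D$ are an example: all six distances equal $D$.
\item No convexity computation can rescue (a). Euler's quadrilateral identity $|pp'|^2+|pq'|^2+|qp'|^2+|qq'|^2=2D^2+4|mm'|^2$, with $m,m'$ the midpoints of $pq$ and $p'q'$, only gives $|mm'|\le D/\sqrt2$.
\item The true pairwise fact is this: after projecting along $(q-p)\times(q'-p')$, the two segments cross. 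The projection keeps both lengths and does not increase distances, so the planar Hopf--Pannwitz argument applies. But the projection depends on the pair, so these facts do not assemble into one drawing.
\item Step 3 never defines a drawing of $G$. Neighbours of different vertices lie on different spheres $S_q$, and polygons through the neighbours of $q$ are curves of another graph, not curves representing the edges $pq$. The odd-meeting property is simply asserted. Also, adjacent edges of a generalized thrackle must meet an odd number of times counting the shared endpoint, so sharing an endpoint is not sufficient.
\item Citing Cairns--Nikolayevsky therefore only relocates the whole difficulty to this unproved step. Dol'nikov's theorem (odd cycles pairwise share a vertex) is a consequence of the parity structure, not a route to it.
\end{itemize}

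\textbf{The fallback and Step 2 also rest on false claims.} Take an equilateral triangle of side $D$. Its ball polytope $B=\bigcap_{p\in P}B(p,D)$ has three faces and two vertices, neither of which is in $P$, and each point of $P$ lies in the relative interior of an edge of $B$. So $B$ is not self-dual in general, and a neighbour of $p$ need not be a vertex of $B$.

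\textbf{What $B$ does supply is the missing lemma, via its Gauss map.}
\begin{itemize}
\item $B$ is strictly convex, so each unit vector $u$ has a unique support point $h(u)$.
\item Let $A_p\subset\Sb^2$ be the spherical convex hull of the vectors $(q-p)/D$, $q\in N(p)$; it lies in an open hemisphere by your (b).
\item $N(p)$ lies in the spherically convex set $S(p,D)\cap B$, which is an intersection of caps of angular radius less than $90^\circ$. Hence $h(u)=p+Du$ for every $u\in A_p$.
\item Each $(p'-q')/D$ with $q'\in N(p')$ is an outer normal of $B$ at $p'$, and the normal cone is convex. Hence $h(u)=p'$ for every $u\in -A_{p'}$.
\item Consequently the sets $A_p$ are pairwise disjoint and the sets $-A_p$ are pairwise disjoint. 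Moreover, $A_p\cap(-A_{p'})$ is empty unless $pp'$ is a diameter, in which case it is the single point $(p'-p)/D$.
\end{itemize}

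After your degree reduction, the $2n$ polygons $\pm A_p$ (arcs for degree-$2$ points) form a centrally symmetric map on $\Sb^2$. It has $2m$ vertices, each shared by exactly one $A_p$ and one $-A_q$. The complementary faces alternate between edges of the two kinds, and a $2$-gon would make some $A_p$ and $-A_q$ share two points, so these faces have length at least $4$. Euler's formula on $\Sb^2$ then gives $m\le 2n-2$. Some argument of this kind, or the original one of Gr\"unbaum, Heppes and Straszewicz, is what your outline leaves out.
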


\begin{theorem}[Question of Lovász, Pach, and Szegedy \cite{lovasz1997conway}, solved by Cairns and Nikolayevsky \cite{CN00}]
Every generalized thrackle has at most $2n-2$ edges.    
\end{theorem}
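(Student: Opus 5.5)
The plan is to convert a generalized thrackle drawing into an embedding of the graph in the projective plane $\Pb$ in which every facial walk has even length. Euler's formula then gives the bound. I follow the strategy of Cairns and Nikolayevsky, using a Hanani--Tutte type theorem on surfaces as the black box that turns parity information into a genuine embedding.

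Throughout, $G$ has $n$ vertices and $e$ edges, and we assume without loss of generality that it is connected with $n\ge 3$ and no isolated vertices. We are given a drawing of $G$ in the plane in which every pair of edges meets an odd number of times, where a shared endpoint counts as one meeting.

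\textbf{Step 1: reduce to the projective plane with even crossing parities.} Two independent edges cross an odd number of times. Two adjacent edges, counting their common endpoint, also meet an odd number of times, so they cross an even number of times in their interiors. I attach a single crosscap to the plane, placed far from the drawing, and reroute every edge once through it. Passing through the crosscap changes the parity of the number of crossings between every pair of rerouted edges. I then locally adjust the rotations at the vertices so that adjacent edges are treated consistently.

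The resulting drawing in $\Pb$ should have every pair of independent edges crossing an even number of times. The bookkeeping of the parity changes for adjacent versus independent pairs is the delicate part of this step. In the alternative form of the construction, where the crosscap is placed at a vertex and one flips the cyclic order there, the same bookkeeping is needed.

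\textbf{Step 2: from even crossings to an embedding.} Apply the strong Hanani--Tutte theorem on the projective plane (Pelsmajer--Schaefer--\v{S}tefankovi\v{c}, and Colin de Verdi\`ere et al.). It states that if $G$ has a drawing in $\Pb$ in which every pair of independent edges crosses evenly, then $G$ embeds in $\Pb$. We need a slightly stronger conclusion: the embedding can be chosen with the same homotopy data, namely that a cycle of $G$ is one-sided exactly when it passes through the crosscap an odd number of times in the drawing.

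By the construction in Step 1, that number is congruent to the length of the cycle modulo $2$. Hence odd cycles are one-sided and even cycles are two-sided. In particular, every contractible closed walk in the embedding has even length.

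\textbf{Step 3: counting.} In this embedding every face is bounded by a contractible closed walk. That walk has even length, and it cannot have length $2$ since the graph is simple, so it has length at least $4$. Therefore $4f\le 2e$. Euler's formula for $\Pb$ (or the corresponding inequality for a non-cellular embedding) gives $n-e+f\ge 1$. Combining the two yields $e\le 2n-2$.

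If $G$ happens to be bipartite, it embeds in the plane with all faces of even length, and the same argument gives the stronger bound $e\le 2n-4$. Equality $e=2n-2$ forces a cellular embedding in which every face is a quadrilateral. This is exactly a non-bipartite quadrangulation of the projective plane, which links the result to the NBQP graphs of this series.

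\textbf{Main obstacle.} I expect Step 2 to be the hardest. This is not the existence of an embedding, which the Hanani--Tutte theorem provides, but controlling its sidedness data so that the parity of cycle lengths matches one-sidedness. My first attempt would be to track the $\mathbb{Z}_2$-homology class of each cycle, defined as its crosscap-crossing parity. I would check that every redrawing move used in the Hanani--Tutte proof preserves this class, since the moves are local and only change crossings by even amounts.
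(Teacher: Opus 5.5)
\textbf{Gap: Step 2.} The paper does not prove this statement. It is quoted from Cairns and Nikolayevsky as motivation, and only the counting side appears in the paper (Lemma~\ref{lem:2nminus2isquad}, and Lemma~\ref{lem:eqmaxcases}, which identifies parity embeddings with subgraphs of NBQP graphs). Judged on its own terms, your Step~1 is correct: after every edge passes once through the crosscap, independent pairs cross evenly and adjacent pairs cross oddly. Your Step~3 is exactly the count of Lemma~\ref{lem:2nminus2isquad} applied to a parity embedding.

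The strong Hanani--Tutte theorem on $\Pb^2$ only gives \emph{some} embedding. As far as I recall, its proof by Pelsmajer--Schaefer--Stasi goes through the forbidden minors of the projective plane, so there would be no sequence of local moves whose effect on $\mathbb{Z}_2$-homology you could track. More importantly, the sidedness-preserving strengthening you ask for is false in general. Take the octahedron with apexes $u,d$ and equator $vxwy$. In its planar embedding, reroute the two opposite edges $uv$ and $uw$ once through a crosscap placed in the face $uvx$; the detour of $uw$ crosses $ux$ twice. Only the adjacent pair $uv,uw$ changes parity, so the drawing is independently even. Its one-sided cycles are those containing exactly one of $uv,uw$. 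No embedding realizes this. The wheel on $d,v,x,w,y$ would have only two-sided cycles, so $u$ must lie in a M\"obius-band face bounded by $vxwy$. An Euler characteristic count inside that face shows that at least two of the triangles $uvx,uxw,uwy,uyv$ bound disks, yet all four should be one-sided. So no general argument can close Step~2; it has to use where your drawing came from.

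\textbf{The missing idea.} Step~1 carries more information than you keep: after the crosscap rerouting, \emph{every} adjacent pair crosses oddly, uniformly. Apply a local half-twist at each vertex, reversing its rotation inside a small disk. This adds exactly one crossing to every pair of edges at that vertex and changes nothing else. In a simple graph two adjacent edges share exactly one endpoint, so all adjacent pairs become even, while independent pairs and the crosscap passages are untouched. This is presumably what your ``locally adjust the rotations'' was aiming at, but you stopped at independent evenness and then reached for the strong theorem.

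You now have a drawing in $\Pb^2$ in which \emph{every} pair of edges crosses evenly and every edge passes the crosscap once. The \emph{weak} Hanani--Tutte theorem on surfaces (Pelsmajer--Schaefer--\v{S}tefankovi\v{c}, \emph{Removing even crossings on surfaces}) gives an embedding with the same embedding scheme, in particular the same all-odd edge signature. In that embedding a cycle is one-sided if and only if it has odd length, so it is a parity embedding, and your Step~3 yields $e\le 2n-2$.
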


\begin{theorem}[Conjecture of Kupitz \cite{KUP84}, solved by Valtr \cite{VALTR98}]
 Every geometric graph without avoiding edges has at most $2n-2$ edges.   
\end{theorem}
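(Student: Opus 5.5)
We prove Kupitz's statement, Valtr's theorem, in the form: a geometric graph $G$ on $n$ points in the plane with no pair of avoiding edges has at most $2n-2$ edges. Here two segments $ab$ and $cd$ are \emph{avoiding} if each lies entirely on one side of the line spanned by the other. The plan is to turn the geometric condition into a local rotation condition at each vertex, which makes $G$ a generalized thrackle. We then use the bound of Cairns and Nikolayevsky stated above, and an extremal graph in that class is an NBQP graph, which explains the $2n-2$.

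The first step is local. Fix a vertex $v$ and order its incident edges by angle. For two edges $vx,vy$ at $v$ and an edge $e$ not incident to $v$, the no-avoiding condition forbids $e$ from lying strictly beside the cone spanned by $vx,vy$ in the wrong way. I would encode each edge $uv$ by the pair of directions $\pm(v-u)$ on the projective line of directions. I expect the condition to say that, for any two edges, the two direction-pairs and the position of the endpoints interleave consistently. This is the same interleaving that appears for diameters in $\R^3$ (Theorem~\ref{thm:listvazsony}) and for pairwise touching pseudodisks (Theorem~\ref{thm:listgr}).

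The second step builds a drawing of $G$ on the projective plane. I would lift the points to antipodal pairs on $\Sb^2$, or equivalently draw each segment together with its "complementary" arc through infinity. The aim is a drawing in which every pair of edges meets an odd number of times, counting a shared endpoint as a meeting. This is the definition of a generalized thrackle, and the Cairns--Nikolayevsky theorem then gives $|E(G)|\le 2n-2$ directly. A more self-contained alternative is to argue by induction on $n$. We would delete a vertex of degree at most $2$, which we must show exists using the angular order on the convex hull; if there is none, we instead contract a suitable pair. Each such step must keep the no-avoiding property.

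The main obstacle is the parity bookkeeping in the second step. For each disjoint pair of edges we must check that "not avoiding" corresponds exactly to an odd number of crossings once the arcs through infinity are added. For edges sharing an endpoint we must check that the local rotation makes their extensions cross an even number of further times. I would start with four points in general position and split into cases on their order type: convex position versus one point inside the triangle of the others. In each case I would verify the parity directly, since the whole condition depends only on the order type of the four endpoints. Once this four-point lemma holds, the global statement follows from the thrackle bound.
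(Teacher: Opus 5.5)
The paper gives no proof of this statement; it is quoted from Kupitz and Valtr as one of the motivating ``views''. Your proposal therefore has to stand on its own, and as written it does not: the four-point parity claim it rests on is false for the drawings you describe.

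For disjoint edges $ab,cd$ (points in general position), let $x$ be the intersection of their lines in $\Pb^2$. Then $x$ lies on the segment $ab$ exactly when the line $cd$ separates $a$ from $b$. This gives the following counts for the drawings you suggest.
\begin{itemize}
\item With straight segments, a non-crossing, non-avoiding pair (say $d$ inside triangle $abc$) meets $0$ times.
\item With each edge drawn as its complementary arc through infinity, two disjoint edges meet once \emph{precisely when they are avoiding}, and crossing pairs meet $0$ times.
\item Whole lines (segment plus complementary arc) meet once no matter what, so they carry no information.
\end{itemize}
Choosing segment or complement edge by edge fails too. An edge $ab$ that is crossed by some edge, and is also paired with an edge $cd$ where $a$ lies inside triangle $bcd$, would need both choices.

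So ``not avoiding $\Leftrightarrow$ odd'' has the wrong parity. Even a drawing in $\Pb^2$ with all pairs meeting oddly would not let you quote Cairns--Nikolayevsky, whose bound is for drawings in the plane. The induction fallback also breaks. $K_4$ with one point inside the triangle of the other three has no avoiding pair, $6=2n-2$ edges, and minimum degree $3$. ``Contracting a pair'' has no meaning that preserves the straight-line hypothesis.

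The missing idea is to aim for an \emph{embedding} rather than a thrackle. Draw each edge $pq$ as the arc of the line $pq$ through infinity. By the computation above, disjoint edges meet if and only if they are avoiding. Adjacent edges lie on distinct lines, so they meet only at the shared vertex. Hence a geometric graph $G$ with no avoiding pair is embedded in $\Pb^2$.

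Each edge crosses the line at infinity exactly once, so a cycle is essential if and only if it is odd. This is a parity embedding, so by Lemma~\ref{lem:eqmaxcases} $G$ is a subgraph of an NBQP graph. That is the connection to NBQP graphs the introduction alludes to.

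For the edge count, project the points centrally to $\hat p$ on the upper hemisphere of $\Sb^2$. For each edge $pq$, draw the shorter great-circle arcs from $\hat p$ to $-\hat q$ and from $-\hat p$ to $\hat q$. This is a crossing-free drawing of $G\times K_2$ (compare Lemma~\ref{lem:bqlift}), which is a simple bipartite planar graph on $2n$ vertices. Hence $2|E(G)|\le 4n-4$, that is, $|E(G)|\le 2n-2$.
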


\begin{theorem}[Consequence of the Euler formula]
    A non-bipartite quadrangulation of the projective plane has $2n-2$ edges.
\end{theorem}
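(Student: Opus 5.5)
The plan is to apply Euler's formula for the projective plane, which has Euler characteristic $1$, to a cellular embedding in which every face is a quadrilateral. Let the quadrangulation have $n$ vertices, $e$ edges and $f$ faces. Since the embedding is cellular, $n - e + f = 1$. Each face is bounded by a closed walk of length four, and each edge is traversed exactly twice in total by these boundary walks, so $4f = 2e$, i.e. $f = e/2$. Substituting gives $n - e/2 = 1$, hence $e = 2n-2$.

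The real content is checking that the hypotheses make this count legitimate, and this is where non-bipartiteness enters. On the sphere the same count gives $e = 2n-4$. So the point is that the embedding really lives on the projective plane. It must not be, for example, a planar quadrangulation sitting inside a disk, which would give a non-cellular embedding.

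I would argue as follows. Suppose some face of the embedding is not an open disk. Since the projective plane has only one nontrivial homotopy class of closed curves (those that are one-sided), a non-disk face must contain a one-sided (orientation-reversing) closed curve. Then the whole graph lies in the complement of a Möbius band, which is a disk. Every face boundary in such a disk embedding is a contractible closed walk. Contractible walks have even length when all faces are even, so the graph is bipartite: every cycle in a planar graph with all faces of even length is even, since its length has the same parity as the sum of the face lengths inside it. This contradicts the assumption that the quadrangulation is non-bipartite. Hence the embedding is cellular and Euler's formula with $\chi = 1$ applies.

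Conversely, one checks that a quadrangulation in the cellular sense on $\mathbb{P}^2$ really can be non-bipartite. This is not needed for the counting statement itself, but it explains why the hypothesis is natural.

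The main obstacle is making precise what "quadrangulation" means when face boundaries are walks rather than cycles. Degenerate faces, such as those with a repeated vertex or an edge traversed twice on the same face, must still be counted correctly in $4f = 2e$. I would handle this by counting edge-sides, or darts, along facial walks rather than edges per face. This makes the double-counting identity hold regardless of degeneracies.
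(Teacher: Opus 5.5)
Your proof is correct and matches the paper's argument, which is the equality case of Lemma \ref{lem:2nminus2isquad}: Euler's formula $n-e+f=1$ for a 2-cell embedding in $\mathbb{P}^2$, combined with the double count $4f=2e$, gives $e=2n-2$. In the paper a quadrangulation is 2-cell by definition, so your cellularity argument via non-bipartiteness is not needed; the count in fact holds for bipartite quadrangulations of $\mathbb{P}^2$ too, so non-bipartiteness plays no role in the edge count, though your argument is a valid way to handle a looser definition of quadrangulation.
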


Our goal in this series is to show that the occurrence of $2n-2$ in so many results is not just a coincidence. We are seeing the same mountain -- the same graph class in this case -- from many different valleys. That is, not only is the maximal number of edges the same in these problems, but the same class of graphs also arises in them. Just as in the case of Everest, many connections were known or conjectured, but in some cases even researchers who worked on several of these problems missed the big picture. For example, Grünbaum solved Theorem \ref{thm:listvazsony} and conjectured Theorem \ref{thm:listgr}, but to the best of our knowledge never showed any connection between the problems. 

A common theme of the series is strong forms of self-duality. There are various notions of duality in discrete geometry, such as duality of planar graphs, duality of polytopes and duality of ball polytopes. For each of these we study structures that are self-dual in some strong sense. Interestingly, many of them show the same underlying combinatorial structures. We will see that strongly involutive self-dual maps correspond to skeletons of self-dual structures and non-bipartite quadrangulations of the projective plane encode the vertex-face incidence structure, see Figures \ref{fig:sisdintro} and \ref{fig:nbqpintro}, respectively. 

We have also created a webpage \cite{damasdi_selfdual_visualizations} for these results where we collected a large number of examples and visualizations. You can also find a number of  talks on the webpage that the author has given on the topic in the past few years. This is going to be a long journey. Stay with us and enjoy the many views of the mountain.

\begin{figure}[!ht]
    \centering
    
    \includegraphics[width=0.18\linewidth]{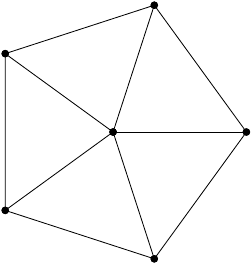}
    \includegraphics[width=0.18\linewidth]{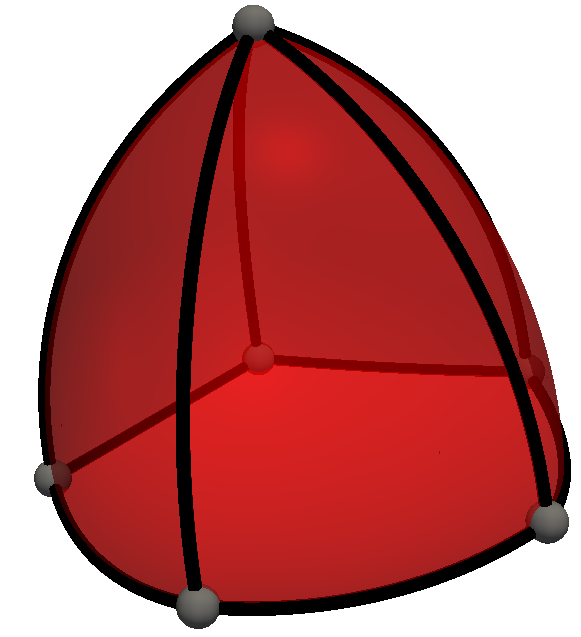}
    \includegraphics[width=0.18\linewidth]{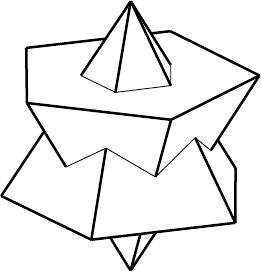}
    \caption{A strongly involutive self-dual graph, a Reuleaux polyhedron and a negatively self-polar polytope.}
    \label{fig:sisdintro}
\end{figure}

\begin{figure}[!ht]
    
    \centering
    \includegraphics[width=0.18\linewidth]{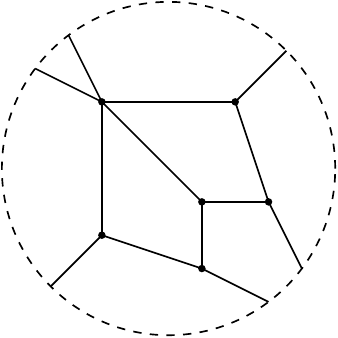}
    \includegraphics[width=0.18\linewidth]{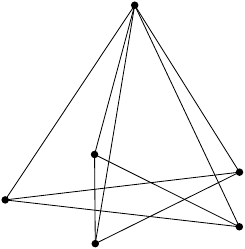}
    \includegraphics[width=0.18\linewidth]{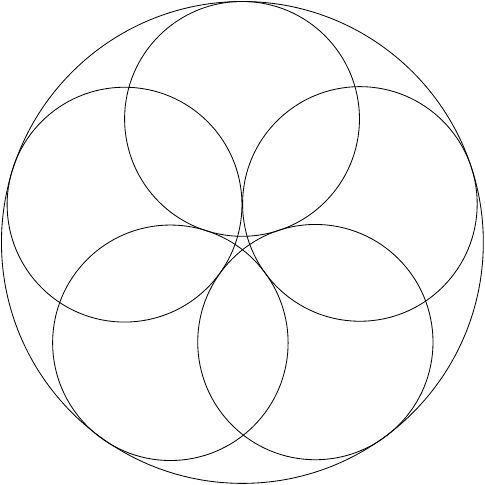}
    \includegraphics[width=0.18\linewidth]{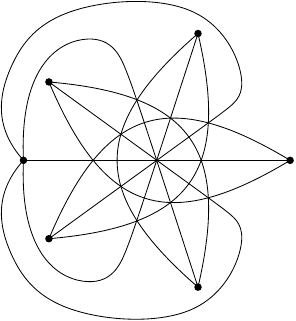}
   
    \caption{ A non-bipartite quadrangulation of the projective plane, a diameter graph in $\R^3$, a family of pairwise intersecting circles, a generalized thrackle graph.}
    \label{fig:nbqpintro}
\end{figure}

The planned parts of the series are as follows.

\begin{itemize}
    \item[Part I.] We show a natural one-to-one correspondence between non-bipartite quadrangulations of the projective plane and 2-connected strongly involutive self-dual maps. 
    \item[Part II.] We fully characterize diameter graphs in $\R^3$. Using this characterization and a novel argument in rigidity theory, we confirm a conjecture of Kupitz-Martini-Perles and a conjecture of Montejano-Pauli-Raggi-Roldán-Pensado on skeletons of Reuleaux polyhedrons. 
    \item[Part III.] We study tangency graphs of pairwise intersecting disk and pseudodisk families in the plane. We characterize the possible tangency graphs, extending the recent resolution of a conjecture of Grünbaum.
    \item[Part IV.]  We consider self-dual cones in $\R^4$ and negatively self-polar polytopes in $\R^3$. We confirm a conjecture of Gouveia-Lourenço and a conjecture of Kupitz-Martini-Perles by showing that the possible skeletons of negatively self-polar polytopes are the 3-connected strongly involutive self-dual maps.
\end{itemize}

\section{Introduction for Part I}

In this paper, we focus on building a connection between two important map classes, the strongly involutive self-dual maps and the non-bipartite quadrangulations of the projective plane. This will provide the combinatorial tools needed for later parts of the series.

  Let us start by explaining a well-known correspondence between planar maps, their duals and quadrangulations.  We consider planar maps as graphs embedded in the sphere, i.e., there is no distinguished outer face. Given a 2-connected planar map $G=(V,E,F)$, its dual map $G^*=(V^*,E^*,F^*)$ is constructed by placing a vertex in each face of $G$ and connecting neighboring faces through their common edges.

  The \emph{radial map $R(G)$} of a planar map is constructed as follows. The vertex set is the vertices and faces of $G$, and edges correspond to vertex-face incidences. We can naturally embed the radial map on the plane by placing each face-vertex in the corresponding face of $G$ and connecting it to incident vertices; see Figure \ref{fig:planarradial}. 

    \begin{figure}[!ht]
        \centering
        \includegraphics[width=0.2\linewidth]{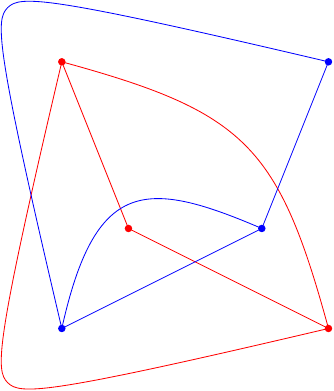}\hspace{20px}
        \includegraphics[width=0.2\linewidth]{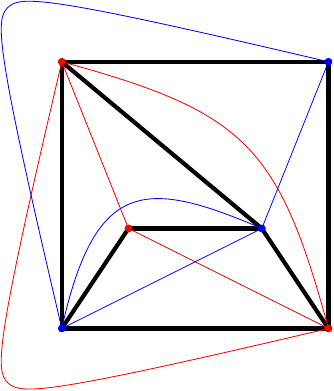}
        \caption{A planar map with its dual and the corresponding common radial graph.}
        \label{fig:planarradial}
    \end{figure}

  The radial map is always a quadrangulation. A planar map and its dual have the same radial map. Conversely, consider a planar quadrangulation. It is bipartite, hence we can construct two diagonal graphs by drawing in each quadrangle the diagonal whose endpoints belong to a given color class. The two diagonal maps are dual to each other and their radial map is the quadrangulation. This connection is very useful as it allows us to translate properties of planar maps to properties of quadrangulations and vice versa. See, for example, Theorem \ref{thm:radialprops} where connectivity properties of the planar map are considered.

    A planar map $G$ is \emph{self-dual} if there is a map isomorphism between an embedding of $G$ and an embedding of $G^*$. The simplest example of a self-dual map is  $K_4$. Self-dual maps already appear in the works of Smith and Tutte \cite{smith1950}. Their first systematic study is due to Archdeacon and Richter, who classified simple 3-connected self-dual planar maps \cite{archdeacon1992construction}. Servatius and Servatius studied various forms of self-duality and classified self-dual pairings of planar graphs \cite{ServatiusServatius1994,ServatiusServatius1995,servatius1996selfdual}. 
    
    The idea to generate self-dual maps using radial graphs already appears in the work of Archdeacon and Richter \cite{archdeacon1992construction}. The idea is very natural: if the planar map is self-dual, then this self-duality gives a map automorphism of the radial graph (and also of the full barycentric subdivision). Hence we can construct self-dual graphs by finding quadrangulations that have a non-trivial automorphism. Their work focuses on 3-connected simple graphs. We extend one of their constructions and several of their observations to the 2-connected, not necessarily simple setting.  We make the correspondence explicit, and collect the elementary consequences that will be used repeatedly in the later papers of the series.

Suppose $G$ is a self-dual map. Following the correspondence given by $*$ and then the isomorphism of $G$ and $G^*$, we obtain a permutation $\tau:V\cup E\cup F\rightarrow V\cup E \cup F$, which we call a \emph{self-duality} of $G$. There might be multiple isomorphisms between $G$ and $G^*$, hence a self-dual map might have several self-dualities. We say that a self-duality  $\tau:G\rightarrow G^*$ is \emph{involutive} if $\tau^2=\id$. Solving a question of Grünbaum and Shepard, Jendrol showed that not all self-dual planar maps exhibit involutive self-dualities \cite{grunbaum1988,jendrol1989}.

We say that the self-duality  $\tau:V\cup E\cup F\rightarrow V\cup E \cup F$ is \emph{strongly involutive} if it satisfies the following conditions:
\begin{enumerate}
    \item[1)]  The self-duality is involutive, i.e., $\tau^2=\id$.
    \item[2)]  For every vertex $u\in V$ we have $u \notin \tau (u)$.
\end{enumerate}

Under the 2-connected hypotheses used below, condition $2)$ is equivalent to saying that the induced automorphism of the barycentric subdivision is fixed-point-free (see Lemma \ref{lem:sisdotherdef}).

Maps having a strongly involutive self-duality will be called \emph{SISD maps}, and graphs that have SISD maps will be called \emph{SISD graphs}. This family of graphs has attracted a lot of attention recently for its connection to several geometric questions. For example, they appear in the study of self-dual polytopes \cite{lovasz1983self,gouveia2023self,horvath2021strongly,horvath2025strongly}, in relation to Reuleaux polyhedrons \cite{montejano2020graphs,lopez2023borsuk}, and their general properties are studied in \cite{montejano2022self,bracho2020strongly,bracho202410}.

SISD maps were almost exclusively studied under the assumption that the graph is 3-connected. We extend some of these results to a more general setting. In \cite{bracho2020strongly} Bracho et al. showed that for a simple 3-connected SISD map there is only one strongly involutive self-duality. We extend this to the 2-connected and not necessarily simple case. 

\begin{theorem}\label{thm:onlyonestrong}
If $G$ is a 2-connected strongly involutive self-dual map, then there is only one self-duality which is strongly involutive. 
\end{theorem}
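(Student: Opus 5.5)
Suppose $\tau$ and $\sigma$ are two strongly involutive self-dualities of $G$. I will show $\tau=\sigma$ by studying $\pi=\sigma\circ\tau$. Each of $\tau,\sigma$ maps $V\to F$, $F\to V$, $E\to E$ and reverses incidence, so $\pi$ preserves dimension and incidence. Hence $\pi$ is a map automorphism of $G$, or at least an automorphism of the radial map $R(G)$ preserving the vertex/face bipartition; with 2-connectivity it extends to the barycentric subdivision.

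The plan is to show $\pi=\id$ using the geometric picture from the abstract. Work on the sphere. Each strongly involutive self-duality induces a fixed-point-free involution of the barycentric subdivision (Lemma~\ref{lem:sisdotherdef}). Since it swaps vertices and faces, it acts on $R(G)$ as an orientation-reversing fixed-point-free involution of $\Sb^2$; the quotient $R(G)/\tau$ is the associated non-bipartite quadrangulation $Q_\tau$ of the projective plane. The automorphism $\pi$ commutes with neither involution a priori. However, $\tau\pi\tau^{-1}=\tau\sigma=\pi^{-1}$, so $\tau$ conjugates $\pi$ to its inverse, and similarly for $\sigma$. Thus $\langle \tau,\sigma\rangle$ is a dihedral group of automorphisms of the barycentric subdivision, acting on $\Sb^2$ by homeomorphisms.

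The key step uses the fact that a finite group acting on the sphere by combinatorial automorphisms is conjugate to a subgroup of $O(3)$. In a dihedral subgroup of $O(3)$, the involutions $\tau,\sigma$ are antipodal-type elements: they are orientation reversing and have no fixed points, so each equals $-I$ (only $-I$ is fixed-point-free among orientation-reversing involutions of $O(3)$). Then $\tau=\sigma$ and $\pi=\id$. To stay combinatorial, I would instead argue directly. Since $\tau$ and $\sigma$ are fixed-point-free orientation-reversing involutions, their product $\pi$ is orientation preserving. If $\pi\neq\id$, it is a finite-order rotation-like automorphism with exactly two fixed cells (Lefschetz/Euler characteristic count: fixed points of a nontrivial orientation-preserving periodic map of $\Sb^2$ sum to $2$). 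Since $\tau\pi\tau=\pi^{-1}$, $\tau$ permutes the fixed set of $\pi$, and as $\tau$ has no fixed points it swaps the two fixed cells $x,\tau(x)$. The same holds for $\sigma$, so $\sigma(x)=\tau(x)$. I then need to propagate equality outward, which is where the combinatorics of $G$ must enter.

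The main obstacle is this last propagation step, or equivalently justifying the $O(3)$-linearization for possibly non-simple 2-connected maps with digons and multiple edges. There, "automorphism of the map" must be taken on flags rather than on vertices. I would first try working with flags. The barycentric subdivision of a 2-connected map is a genuine triangulation, and its automorphisms act freely on flags once one flag is fixed. So $\pi$ is determined by the image of one flag. Using strong involutivity condition 2), I would show that $\pi$ fixes some flag, which forces $\pi=\id$: $\tau$ and $\sigma$ both send $x$ to $\tau(x)$ compatibly with the incidence ordering around $x$, and reversing orientation both ways yields a fixed flag at the fixed cell $x$ of $\pi$. If that fails, I would fall back on the Euler-characteristic count on the quotient $\Sb^2/\langle\tau,\sigma\rangle$. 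This quotient must be a closed surface or orbifold with $\chi = 2/|\langle\tau,\sigma\rangle|$. Freeness of every involution of the form $\tau\pi^k$ would force the order to be at most $2$.
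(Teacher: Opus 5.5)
Your $O(3)$ argument is the paper's proof. The paper realizes all of $Aut(B(G))$ by isometries of $\Sb^2$ via Lemma~\ref{lem:straight}. By Lemma~\ref{lem:sisdotherdef}, $\overline\tau$ and $\overline\sigma$ are then fixed-point-free involutive isometries, so both are the antipodal map and $\tau=\sigma$.

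The obstacle you raise about non-simple maps with digons and parallel edges does not arise, because the linearization is applied to $B(G)$, not to $G$:
\begin{itemize}
\item A 2-connected self-dual map is loop-free (Lemma~\ref{lem:selfdualloopfree}), so $B(G)$ is a simple 3-connected triangulation (Lemma~\ref{lem:barycentric3conn}).
\item Every edge of $G$, parallel or not, becomes its own vertex $b_e$, and the triangles of $B(G)$ are exactly the flags. So automorphisms of $B(G)$ already act on flags.
\item Whitney's theorem lets every graph automorphism of $B(G)$ extend to a map automorphism.
\item Mani's theorem, or Lemma~\ref{lem:straight} (which needs only connectivity), gives the isometric realization.
\end{itemize}
So you can simply commit to this route; no ``propagation'' step is needed.

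The combinatorial substitutes you sketch, however, would not work as written.

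In the flag argument, you know that $\tau$ and $\sigma$ both send the fixed cell $x$ of $\pi$ to the same cell, each reversing the cyclic order of flags. That only shows that $\pi=\sigma\tau$ fixes $x$ and acts on the flags around $x$ as a rotation. A nontrivial $\pi$ does exactly that, so no fixed flag is produced.

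In the Euler-characteristic fallback, the reflections $\tau\pi^k$ are indeed free, since each is conjugate to $\tau$ or $\sigma$. But then $\Sb^2/\langle\tau,\sigma\rangle$ would be $\Pb^2$ with one cone point of order $n$, the order of $\pi$. Its orbifold Euler characteristic is $1/n = 2/(2n)$ for every $n$, so the count is consistent and bounds nothing. Ruling out $n\ge 2$ this way needs an extra input. For instance, the orbifold fundamental group of that quotient is cyclic of order $2n$, whereas $\langle\tau,\sigma\rangle$ is dihedral. At that point you are essentially reproving the linearization.
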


Secondly, we study simple non-bipartite quadrangulations of the projective plane, and we will call the corresponding map an \emph{NBQP map}. Graphs that can be embedded as an NBQP will be called \emph{NBQP graphs}. These graphs already appear in the works of Negami \cite{NE86,NE92}. They became well known after the famous result of Youngs \cite{YOU96} who showed that the chromatic number of an NBQP is always exactly 4. Since then, NBQP graphs have been extensively studied. We can find results on their chromatic properties \cite{mohar2006quadrangulations,nakamoto2004}, on their face-width \cite{esperet2018}, on their generating methods \cite{NEG93} and many more \cite{cairns2008,KAISER20151,MAT16}.  They are also closely related to \emph{parity embeddings}; see Section \ref{sec:applications}. 

The main goal of the paper is to show that these two classes of maps are in a natural one-to-one correspondence.  More precisely,  given a strongly involutive self-dual planar map $G$, one can take its radial map $R(G)$. This results in a planar quadrangulation that has an automorphism given by the strongly involutive self-duality of the original graph. As the self-duality is strongly involutive, this is a fixed-point-free automorphism and we can factor the map by this automorphism to obtain a quadrangulation $\overline R(G)$ of the projective plane; see Section \ref{sec:sisdgraphs} for details and Figure \ref{fig:sisdtonbqp} for an illustration. 

Conversely, consider the following construction due to Archdeacon and Richter \cite{archdeacon1992construction}. Given a simple non-bipartite quadrangulation $Q$ of $\Pb^2$, there is a natural diagonal map $\diag(Q)$, which is a planar strongly involutive self-dual map. To obtain $\diag(Q)$ we lift the quadrangulation $Q$ to the sphere and obtain a planar bipartite quadrangulation denoted by $\widetilde Q$. The map $\diag(Q)$ is constructed by taking all diagonals in $\widetilde Q$ corresponding to one of the color classes; see Section \ref{sec:quad} for details and Figure \ref{fig:diaggraph} for an illustration.

Our main result states that we have the following connections between these operations.

\begin{theorem}[SISD--NBQP correspondence]\label{thm:commute} The two operations, $\diag()$ and $\overline R()$, are inverse bijections between the family of simple non-bipartite quadrangulations of the projective plane  and the family of 2-connected strongly involutive self-dual maps. They satisfy the following properties.  
\begin{enumerate}
    \item[a)] If $G$ is a 2-connected strongly involutive self-dual map, then $G=\diag(\overline R(G))$ as planar maps.
    \item[b)] If $Q$ is a non-bipartite quadrangulation of the projective plane, then $Q=\overline R(\diag(Q))$ as planar maps.
    \item[c)] If $Q$ is a non-bipartite quadrangulation of the projective plane with cover map $\widetilde Q$, then $\widetilde Q= R(\diag(Q))$ as projective planar maps.
\end{enumerate}
    \end{theorem}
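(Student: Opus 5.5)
The plan is to work throughout on the sphere with the planar bipartite quadrangulation $\widetilde Q$ and its fixed-point-free orientation-reversing involution $\alpha$ (the antipodal deck transformation). Both constructions live at this level: $\overline R(G)$ is $R(G)/\tau$, and $\diag(Q)$ is read off $\widetilde Q$. So the three statements reduce to comparing $R(\cdot)$, $\diag(\cdot)$ and quotient/lift operations on the sphere. I would prove c) first, then b), then a), and deduce bijectivity from a) and b). Along the way I need that the two operations land in the right classes: $\diag(Q)$ is a 2-connected SISD map, and $\overline R(G)$ is a simple NBQP.

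\textbf{Step 1 (well-definedness of $\diag$ and part c).} Since $Q$ is non-bipartite, the lift $\widetilde Q$ is connected. As a quadrangulation of the sphere it is bipartite, with colour classes $A$ and $B$. The deck involution $\alpha$ swaps $A$ and $B$: a non-contractible closed walk in $Q$ has odd length and lifts to a path from $v$ to $\alpha(v)$. Let $D_A$ and $D_B$ be the diagonal maps on $A$ and on $B$. By the classical planar correspondence recalled above (the radial-map discussion), $D_A$ and $D_B$ are dual to each other and $R(D_A)=\widetilde Q$; this is exactly c), with $\diag(Q)=D_A$. The involution $\alpha$ maps $D_A$ onto $D_B=D_A^*$. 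It therefore induces a self-duality $\tau$ with $\tau^2=\id$, because $\alpha^2=\id$.

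For strong involutivity, a vertex $u\in A$ with $u\in\tau(u)$ would mean that $\alpha(u)$ is a face-vertex of $\widetilde Q$ adjacent to $u$. That gives a quadrangle face of $Q$ containing a loop, or an edge $u\,\alpha(u)$ that projects to a loop, contradicting simplicity of $Q$. 2-connectivity of $D_A$ should follow from Theorem~\ref{thm:radialprops} once I check that $\widetilde Q$ has no loops or multiple edges in the relevant sense, since simplicity of $Q$ lifts.

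\textbf{Step 2 (well-definedness of $\overline R$ and part b).} For a 2-connected SISD map $G$, Lemma~\ref{lem:sisdotherdef} says $\tau$ acts freely on the barycentric subdivision, hence freely on $R(G)$. It is orientation-reversing, since it exchanges vertices and faces in a way that swaps the two colour classes of the sphere structure. So $R(G)/\tau$ is a quadrangulation of $\Pb^2$. It is non-bipartite because $\tau$ swaps the bipartition classes of $R(G)$: a path from $v$ to $\tau(v)$ projects to an odd closed walk. Simplicity should again come from Theorem~\ref{thm:radialprops} together with the condition $u\notin\tau(u)$.

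For part b), apply Step 1 and quotient: $R(\diag(Q))=\widetilde Q$, and the self-duality $\tau$ coming from $\alpha$ acts as $\alpha$. Hence $\overline R(\diag(Q))=\widetilde Q/\alpha=Q$.

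\textbf{Step 3 (part a).} Put $Q=\overline R(G)$. Then $\widetilde Q=R(G)$, with deck map $\tau$, by uniqueness of the orientation double cover of $\Pb^2$. The diagonals of $R(G)$ on the vertex class $V$ reproduce $G$, by the planar correspondence. The one remaining point is that $\diag$ involves choosing a colour class, and that choice must be harmless. The two choices give $G$ and $G^*$, which are identified by $\tau$ itself, so the result is the same up to map isomorphism. Theorem~\ref{thm:onlyonestrong} guarantees that the strong involution on $\diag(\overline R(G))$ is the original $\tau$. Combining a) and b) gives the inverse bijections.

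\textbf{Main obstacle.} I expect the hardest part to be the degenerate cases in the 2-connected, non-simple setting. Specifically, I have to verify that simplicity of $Q$ corresponds exactly to 2-connectivity of $G$ together with the condition $u\notin\tau(u)$. In $R(G)$ a multiple edge arises from a vertex meeting a face twice, which happens at a cut vertex. I would attack this by a direct case check of length-2 cycles and loops in $R(G)$ and in its quotient, using Theorem~\ref{thm:radialprops}.
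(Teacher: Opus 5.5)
Your proposal follows essentially the same route as the paper. Part c) comes from the planar diagonal/radial correspondence on $\widetilde Q$, and b) from quotienting $R(\diag(Q))=\widetilde Q$ by the deck involution. Part a) comes from reading off the diagonals of $R(G)$ on the class $V$ and using that $\tau$ identifies $G$ with $G^*$. Bijectivity then follows from a), b) and the well-definedness of both operations.

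Two small corrections are needed.

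\begin{itemize}
\item In b), your Step 2 silently uses that the involution defining $\overline R(\diag(Q))$ is the one induced by $\alpha$. That identification is exactly what Theorem~\ref{thm:onlyonestrong} provides, and it is where the paper invokes the theorem.
\item In Step 2, your reason for orientation reversal is false. A rotation can swap the colour classes of a bipartite quadrangulation, as with the cube. The claim is also unnecessary: what you need is an honest fixed-point-free involutive homeomorphism representing $\overline{\tau}$, and the quotient of $\Sb^2$ by such a map is automatically $\Pb^2$. The paper obtains this involution from the antipodal embedding of Theorem~\ref{thm:strisantipodal}.
\end{itemize}
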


 See Figure \ref{fig:bigpic} for an illustration of the correspondence. We have also collected a large number of examples in  Appendix \ref{app:examples}. The operations are sensitive to the embeddings of the graphs; hence, the need to keep track of embeddings deserves emphasis. Several later geometric realizations depend not only on the abstract graph but also on the embedding of the graph. 

The main contribution of Part I is the clarification and packaging of known constructions into a precise dictionary between the non-bipartite quadrangulations of the projective plane and 2-connected strongly involutive self-dual maps, see Table \ref{tab:nbqp-sisd-dictionary}.

 \begin{table}[ht]
\centering
\renewcommand{\arraystretch}{1.25}
\begin{tabular}{c|c}
\textbf{NBQP side} & \textbf{SISD side} \\
\hline
Non-bipartite quadrangulation $Q$ of $\Pb^2$
&
Reduced radial map $\overline R(G)$
\\
Diagonal map  $\diag(Q)$
&
2-connected strongly involutive self-dual planar map $G$
\\

Canonical double cover map $\widetilde Q$
&
Radial map $R(G)$
\\

A face quadrangle of $Q$
&
A primal-dual edge pair $e,e^*$ of $G$
\\

The two diagonal classes of $\widetilde Q$
&
The primal and dual edge sets of $G$
\\

No separating $4$-cycle in $Q$
&
3--connectivity of $\diag(Q)$
\\

A separating $4$-cycle in $Q$
&
A $2$-cut in $\diag(Q)$
\\

\end{tabular}
\caption{Dictionary between non-bipartite quadrangulations of the projective plane and 2-connected strongly involutive self-dual planar maps.}
\label{tab:nbqp-sisd-dictionary}
\end{table}

Furthermore, we develop a number of useful lemmas on various properties of SISD maps and NBQP maps. For example, we have the following relations between planar maps and their radial maps. We say that a 4-cycle of $Q$ is a \emph{separating 4-cycle} if it is a 4-cycle that does not bound a face of $Q$.  

\begin{theorem}[{\cite[Theorem~5]{BrinkmannEtAl2005}}]\label{thm:radialprops}   
Let $G$ be a connected planar map, and let $R(G)$ be its radial map. 
\begin{enumerate}
    
    \item $R(G)$ is a simple quadrangulation if and only if $G$ is loop-free and $2$-connected (but not necessarily simple).
    \item $R(G)$ is a simple quadrangulation with minimum degree $3$ if and only if $G$ is loop-free and 2-connected (but not necessarily simple) with no vertices of degree 2 or faces of size 2;
    \item $R(G)$ is a (simple) 3-connected quadrangulation if and only if $G$ is simple, 2-connected, and 3-edge-connected;
    \item $R(G)$ is a (simple) 3-connected quadrangulation without separating 4-cycles if and only if $G$ is simple and 3-connected.
\end{enumerate}
\end{theorem}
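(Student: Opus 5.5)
This is a cited result (Brinkmann et al.), but here is how I would prove it directly. The common tool is that the radial map $R(G)$ is a quadrangulation whose faces correspond to the edges of $G$. Its $4$-cycles of the form $v,f,w,g$ correspond to an edge $vw$ of $G$ separating faces $f,g$, or to more general vertex-face incidence patterns. Throughout I use the symmetry $R(G)=R(G^*)$, so every statement about $G$ has a dual counterpart for $G^*$. I would prove the four items in order, each building on the previous.

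\emph{Item 1.} A multiple edge $vf$ in $R(G)$ means that vertex $v$ appears at least twice on the boundary walk of face $f$. I would show this happens exactly when $v$ is a cut vertex of $G$ or lies on a loop. If $v$ occurs twice on the facial walk of $f$, a simple closed curve through $f$ and $v$ separates the rest of $G$, so $v$ is a cut vertex unless the repetition comes from a loop at $v$. Conversely, a cut vertex, or a loop, yields such a repetition. Hence ``simple'' is equivalent to ``loop-free and $2$-connected''. Multiple edges in $G$ are harmless, since $2$-gon faces still give distinct incidences.

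\emph{Item 2.} Add the degree condition. In $R(G)$, the degree of a vertex-vertex $v$ equals $\deg_G(v)$ (in the simple case), and the degree of a face-vertex $f$ equals the size of $f$. Degree-$1$ vertices are already excluded by $2$-connectivity, so minimum degree $3$ is exactly the absence of degree-$2$ vertices and $2$-faces.

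\emph{Items 3 and 4.} A simple quadrangulation is $3$-connected iff it has no separating $4$-cycle whose two sides each contain a vertex and that is ``degenerate'' in a suitable sense. Concretely, a $2$-cut $\{x,y\}$ in a simple quadrangulation $Q$ forces $x,y$ to be in the same colour class and a separating $4$-cycle $x a y b$ with both sides nonempty. I would use this standard characterization, proved via the facial structure around a $2$-cut.

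For item 3, a $2$-cut $\{x,y\}$ with $x,y$ both vertices of $G$ corresponds to two faces $a,b$ of $G$ both incident to $x$ and $y$ such that $\{x,y\}$ separates. In $G$ this is a $2$-vertex-cut, or a multiple edge (a $2$-face). Symmetrically, $x,y$ both faces gives a $2$-edge-cut of $G$, or a degree-$2$ vertex, which is a $2$-vertex-cut of $G^*$. Matching these cases yields: simple and $3$-edge-connected, plus $2$-connected from item 1. The subtle direction is that a $2$-vertex-cut of $G$ without a multiple edge need not give a $2$-cut of $R(G)$, because $R(G)$ can have alternative paths through the faces. So the condition reduces to simplicity of $G$ rather than $3$-connectivity, and I would verify this carefully by case analysis.

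For item 4, any separating $4$-cycle $x a y b$ with $x,y\in V$ and $a,b\in F$ gives a closed curve through two vertices and two faces. This curve meets $G$ only at $x,y$ and separates vertices of $G$, which is exactly a $2$-vertex-cut, i.e.\ a failure of $3$-connectivity. The converse builds such a curve from any $2$-cut of $G$.

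\textbf{Main obstacle.} The hardest part is the bookkeeping in item 3: distinguishing separating $4$-cycles that yield $2$-cuts of $R(G)$ from those that merely separate without disconnecting. This requires the exact quadrangulation connectivity lemma, which I would prove first.
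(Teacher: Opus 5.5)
The paper does not prove this statement; it is quoted from Brinkmann et al., so your sketch has to stand on its own. Items 1, 2 and 4 are essentially right. Item 3, the only non-routine part, has a genuine gap.

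The tool you propose for item 3 is that a $2$-cut $\{x,y\}$ of a simple quadrangulation yields a separating $4$-cycle $xayb$. This is not quite true: $C_4$ (the radial map of a digon) and $K_{2,3}=R(C_3)$ have $2$-cuts, but every $4$-cycle in them is facial. More seriously, it discards exactly the information you need. For $x,y\in V(G)$, a separating $4$-cycle $xayb$ only tells you that $\{x,y\}$ is a $2$-vertex-cut of $G$ or that there is a digon, and $2$-vertex-cuts of $G$ are permitted by the hypotheses of item 3. So the direction ``$G$ simple, $2$-connected, $3$-edge-connected $\Rightarrow$ $R(G)$ is $3$-connected'' cannot be obtained this way. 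You observe that a $2$-vertex-cut of $G$ ``need not'' give a $2$-cut of $R(G)$, but item 3 requires that it \emph{never} does (absent parallel edges and $2$-edge-cuts). You leave this to an unspecified case analysis, and it is the whole content of item 3.

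The missing idea is to characterize $2$-cuts by faces rather than by $4$-cycles. In a simple quadrangulation, $\{x,y\}$ is a $2$-cut if and only if $x$ and $y$ are opposite corners of at least two distinct faces.

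For the forward direction, take the $\{x,y\}$-bridges in their cyclic order around $x$. The face in each gap between consecutive bridges has a boundary cycle running from $x$ to $y$ through one bridge and back through the next. A gap between two non-trivial bridges therefore gives a face $xsyt$. By bipartiteness $x$ and $y$ then have the same colour, so $xy$ is not an edge and all bridges are non-trivial. The (at least two) gaps are distinct corners at $x$, so they give distinct faces with opposite corners $x,y$.

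For the converse, a closed curve through $x$, the first face, $y$ and the second face separates the two remaining corners of the first face.

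Now use that the faces of $R(G)$ are the quadrangles $vfwg$, one for each edge $vw$ of $G$ with incident faces $f,g$. Two faces of $R(G)$ with opposite corners $x,y\in V(G)$ are the same thing as two parallel edges $xy$ in $G$. Two faces with opposite corners $x,y\in F(G)$ are the same thing as two edges shared by the faces $x$ and $y$, i.e.\ a parallel pair in $G^*$, i.e.\ a $2$-edge-cut of $G$. Together with item 1 this gives item 3 at once. It also explains why $2$-vertex-cuts of $G$ are invisible to the connectivity of $R(G)$ and only appear as separating $4$-cycles in item 4.

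One smaller point on item 4: your claim that the curve through $x,a,y,b$ ``separates vertices of $G$'' needs simplicity of $G$. For a triangle $xyz$ with the edge $xy$ doubled, the non-facial $4$-cycle $xayb$ of $R(G)$ has only the digon face on one side and $z$ on the other, so it is not a $2$-vertex-cut of $G$.
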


In the same spirit, we show the following.  

\begin{theorem}\label{thm:3connected}
     For a non-bipartite quadrangulation of the projective plane $Q$, the following are equivalent:
     \begin{enumerate}
         \item[a)] $Q$ has no separating 4-cycle,
         \item[b)] $\diag(Q)$ is 3-connected,
         \item[c)] for every proper subset $S\subset V(Q)$ that contains at least 4 vertices, we have $|E(Q[S])|< 2|S|-2$.
     \end{enumerate}
\end{theorem}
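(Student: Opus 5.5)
We prove a)$\Leftrightarrow$b) by lifting to the sphere and using Theorem~\ref{thm:radialprops}, and a)$\Leftrightarrow$c) by an Euler-characteristic count on subgraphs of $Q$.

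\textbf{a)$\Leftrightarrow$b).} By Theorem~\ref{thm:commute}, $G=\diag(Q)$ is a 2-connected SISD map with $R(G)=\widetilde Q$. By part 4 of Theorem~\ref{thm:radialprops}, $G$ is simple and 3-connected if and only if $\widetilde Q$ is a simple 3-connected quadrangulation with no separating 4-cycles. So it suffices to show that $\widetilde Q$ has a separating 4-cycle (or a 2-cut, or $G$ has a multiple edge) exactly when $Q$ has a separating 4-cycle.

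Let $\pi:\widetilde Q\to Q$ be the covering map, with deck involution $\sigma$.
\begin{itemize}
\item If $C$ is a separating 4-cycle of $Q$, it bounds no face. Its preimage under $\pi$ is either two disjoint 4-cycles swapped by $\sigma$ (when $C$ is contractible) or one 8-cycle (when $C$ is non-contractible). A non-contractible cycle has odd length in a non-bipartite quadrangulation, so the first case occurs. Each lift is then a 4-cycle of $\widetilde Q$ that bounds no face, hence is separating.
\item Conversely, let $D$ be a separating 4-cycle of $\widetilde Q$. If $\pi|_D$ is injective, $\pi(D)$ is a 4-cycle of $Q$ bounding no face, since a face of $Q$ lifts to faces. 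If $\pi|_D$ is not injective, then $D$ meets $\sigma(D)$. Since $\sigma$ is fixed-point free and $D$ is short, the only possibility is $\sigma(D)=D$ with antipodal vertices identified. Then $\pi(D)$ is a closed walk of length 2, which is impossible because $Q$ is simple.
\end{itemize}
Multiple edges or 2-cuts in $G$ should be handled the same way. A 2-cut $\{u,v\}$ of $G$, or a digon, gives a 4-cycle $u,f,v,g$ in $R(G)=\widetilde Q$ that bounds no face. So these degenerations are also detected by separating 4-cycles of $\widetilde Q$. This lets us bypass the simplicity hypothesis in Theorem~\ref{thm:radialprops} by a direct argument.

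\textbf{c)$\Rightarrow$a).} Let $C$ be a separating 4-cycle of $Q$. It is contractible, so it bounds a disk $\Delta\subset\Pb^2$ that contains at least one vertex of $Q$ in its interior. Let $S$ be the set of vertices in the closed disk, and let $k$ be the number of faces of $Q$ inside $\Delta$. Euler's formula for the disk, with every inner face a quadrangle and boundary of length 4, gives
\[
|E(Q[S])|=2|S|-4+\text{(chords)},
\]
which is too weak as it stands. Instead, take $S$ to be the vertices in the complementary region, which is a M\"obius band, together with $C$. Euler characteristic $0$, quadrangular faces and boundary length 4 give $|E(Q[S])| = 2|S|-2$ as long as $S$ is proper. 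This violates c).

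\textbf{a)$\Rightarrow$c).} Take a proper subset $S$ with $|S|\ge 4$ and $|E(Q[S])|\ge 2|S|-2$. Embed $H=Q[S]$ in $\Pb^2$ as induced by $Q$. Every face of $H$ has length at least 4, and Euler's formula in $\Pb^2$ gives $|E|\le 2|S|-2$, with equality only when $H$ is cellularly embedded with all faces quadrangles.
\begin{itemize}
\item If some face of $H$ is not a face of $Q$, that face is a 4-cycle bounding a disk containing vertices of $Q$, so it is a separating 4-cycle of $Q$.
\item Otherwise every face of $H$ is a face of $Q$, which forces $H=Q$ and contradicts that $S$ is proper.
\end{itemize}

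\textbf{Main obstacle.} The subtle case is when $H$ is not cellularly embedded, for instance when $H$ is bipartite and lies in a disk. Then the Euler bound becomes $2|S|-4$, which gives the contradiction directly. This case must still be checked carefully, as must the degenerate-walk case in the a)$\Leftrightarrow$b) lifting argument.
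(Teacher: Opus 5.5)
Your overall route is the paper's. You get a)$\Leftrightarrow$b) through $R(\diag(Q))=\widetilde Q$ and Theorem~\ref{thm:radialprops}, c)$\Rightarrow$a) from the M\"obius-band side of a separating 4-cycle, and a)$\Rightarrow$c) from the equality case of the Euler bound (Lemmas~\ref{lem:2nminus2isquad} and~\ref{lem:maximal} in the paper). Your explicit lifting and projecting of separating 4-cycles fills in a step the paper only asserts.

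There is, however, a missing input in b)$\Rightarrow$a). Part 4 of Theorem~\ref{thm:radialprops} needs $G=\diag(Q)$ to be \emph{simple}, while b) only gives 3-connectivity. This is not cosmetic. Suppose a 3-connected $G$ had parallel edges $e_1,e_2$ between $u$ and $v$. Since $\{u,v\}$ is not a 2-cut, one side of $e_1\cup e_2$ contains no vertex. Let $h_1,h_2$ be the faces on the other sides of $e_1,e_2$. Then $u\,h_1\,v\,h_2$ is a non-facial 4-cycle of $R(G)$, and it projects to a separating 4-cycle of $Q$. What rules this out is self-duality: a parallel pair is a 2-edge-cut of $G^*\cong G$. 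This is Lemma~\ref{lem:selfdualsimple}, and your argument never invokes it in this direction.

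Three further steps need repair.

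\textbf{The 2-cut claim in a)$\Rightarrow$b).} Your sentence ``a 2-cut $\{u,v\}$ gives a 4-cycle $u,f,v,g$ that bounds no face'' is the entire content of the paper's proof of this direction. It is true, but only for a suitable choice of faces: $f$ and $g$ must be the two faces flanking one non-trivial bridge of $\{u,v\}$ in the rotation at $u$. If an edge $uv$ is present, a careless choice gives exactly the facial quadrangle of that edge. Either argue it this way, or follow the paper's region $\bigcup\mathcal A$, whose boundary is forced to be a single 4-cycle $a_2i_1b_2j_1$.

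\textbf{The non-injective case of your second bullet.} The configuration you describe ($d_2=\sigma(d_0)$) cannot occur. The involution $\sigma$ swaps the colour classes $V_1,V_2$, while $d_0$ and $d_2$ have the same colour. Hence any identification $d_j=\sigma(d_i)$ would make $d_i$ adjacent to $\sigma(d_i)$, which is impossible because $Q$ has no loops. So $\pi|_D$ is always injective; your appeal to a ``closed walk of length 2'' is not the right reason.

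\textbf{Face lengths in a)$\Rightarrow$c).} The claim that every face of $H$ has length at least 4 needs the parity argument of Lemma~\ref{lem:maximal}. A face of size 3 would be bounded by a triangle. That triangle is essential by Corollary~\ref{cor:nbqp-parity}, and a face running along an essential cycle meets it from both sides, so its size is larger than 3. Your worry about non-cellular $H$ is already settled by the equality clause of Lemma~\ref{lem:2nminus2isquad}.
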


Then,  by Theorem \ref{thm:commute}, we immediately obtain the following.
 
\begin{corollary}\label{col:3conequiv}
    Let $G$ be a 2-connected strongly involutive self-dual map. Then the following are equivalent:
    \begin{enumerate}
        \item $G$ is 3-connected.
        \item $\overline R(G)$ has no separating 4-cycles.
        \item For every proper subset $S\subset V(\overline R(G))$ that contains at least 4 vertices, we have $|E(\overline R(G)[S])|< 2|S|-2$.
    \end{enumerate}
    
\end{corollary}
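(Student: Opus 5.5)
The plan is to deduce the corollary directly from Theorem \ref{thm:commute} and Theorem \ref{thm:3connected}, with no new combinatorics. Let $G$ be a 2-connected strongly involutive self-dual map and set $Q=\overline R(G)$.

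\textbf{Step 1: $Q$ is a simple non-bipartite quadrangulation of $\Pb^2$.} This holds because $\overline R$ maps the family of 2-connected SISD maps into the family of simple NBQP maps; that is part of the statement that $\diag()$ and $\overline R()$ are inverse bijections between these two families in Theorem \ref{thm:commute}. If one wants this more concretely, the construction gives it directly. $R(G)$ is a planar quadrangulation, and it is simple by Theorem \ref{thm:radialprops}(1), since $G$ is 2-connected and loop-free. Loop-freeness should follow from strong involutivity: a loop at $u$ would be hard to reconcile with $u\notin\tau(u)$ under 2-connectivity, but I would simply cite the bijection.

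Quotienting by the fixed-point-free involution induced by $\tau$ gives a quadrangulation of the projective plane. It is non-bipartite because its double cover is the sphere quadrangulation $R(G)$, and this is its orientable (canonical) double cover. The one point I would double-check is that the quotient stays simple. That is again guaranteed by the bijection in Theorem \ref{thm:commute}, so I would rely on it rather than reprove it.

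\textbf{Step 2: identify $G$ with $\diag(Q)$.} By Theorem \ref{thm:commute}(a), $G=\diag(\overline R(G))=\diag(Q)$ as planar maps. Hence $G$ is 3-connected if and only if $\diag(Q)$ is 3-connected.

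\textbf{Step 3: apply Theorem \ref{thm:3connected}.} Applied to $Q$, it says that "$\diag(Q)$ is 3-connected" is equivalent to "$Q$ has no separating 4-cycle". It is also equivalent to the edge-count condition $|E(Q[S])|<2|S|-2$ for all proper $S\subset V(Q)$ with $|S|\ge 4$. Substituting $G=\diag(Q)$ and $Q=\overline R(G)$ gives exactly items 1, 2 and 3 of the corollary.

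\textbf{Main obstacle.} The only subtle point is making sure the hypotheses match: that $\overline R(G)$ really lies in the domain where Theorem \ref{thm:3connected} applies (simple and non-bipartite), and that "$=$" in Theorem \ref{thm:commute}(a) is an equality of maps. The latter is needed so that 3-connectivity, a graph property, transfers verbatim. Both points are provided by Theorem \ref{thm:commute}, so the argument is essentially a two-line substitution.
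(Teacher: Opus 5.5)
Your proposal is correct and matches the paper, which obtains the corollary in one line by combining Theorem~\ref{thm:commute} with Theorem~\ref{thm:3connected}: $\overline R(G)$ is an NBQP map (Lemma~\ref{lem:sisdtoquad}), $G=\diag(\overline R(G))$ by Theorem~\ref{thm:commute}(a), and the three conditions of Theorem~\ref{thm:3connected} then translate verbatim. Your informal asides in Step~1 are loose but harmless because you defer to those results anyway: loop-freeness comes from Lemma~\ref{lem:selfdualloopfree} (a loop in $G$ gives a cut-edge in $G^*\cong G$), not from strong involutivity, and non-bipartiteness comes from the involution swapping the two colour classes of $R(G)$ (the vertices and the faces of $G$).
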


The paper is organized as follows. In Section \ref{sec:notions} we introduce our basic notions. In Section \ref{sec:quad} we discuss non-bipartite quadrangulations of the projective plane and define the diagonal map. In Section \ref{sec:sisdgraphs} we study strongly involutive self-dual maps. We introduce the reduced radial map and show the NBQP-SISD correspondence, Theorem \ref{thm:commute}. We discuss connectivity properties of strongly involutive self-dual maps in Section \ref{sec:applications} and we also show a connection to parity embeddings.

\section{Embeddings, maps, and self-duality}\label{sec:notions}

A key idea that we explore in this series is self-duality. 
Several notions of self-duality for planar graphs appear in the literature, and they are not always equivalent; see \cite{servatius1996selfdual}. Since the distinction is important for the arguments in this paper, we begin by fixing our terminology carefully.

An embedding of a graph $G=(V,E)$ in a surface $\Sigma$ is a topological embedding $\varphi: G \hookrightarrow \Sigma$. An embedding divides the surface into regions called the \emph{faces} of the embedding. We denote the set of faces by $F$. An embedding is called a \emph{2-cell embedding} if every face is homeomorphic to an open disk. The \emph{size} of a face is the number of edges on its boundary, counted with multiplicity.

 Suppose $\varphi_1:G_1\hookrightarrow \Sigma$  and $\varphi_2:G_2\hookrightarrow \Sigma$ are embeddings of two graphs in the same surface. We say that $\varphi_1$ and $\varphi_2$ are  \emph{topologically isomorphic} if they differ only in a homeomorphism of the underlying surface; that is, if there is a homeomorphism $h:\Sigma\rightarrow \Sigma$ and a graph isomorphism $\gamma:G_1\rightarrow G_2$ such that $h\circ\varphi_1 =\varphi_2\circ\gamma$. Note that isomorphism of embeddings is not merely an incidence-preserving isomorphism of the incidence structure of the cells, but requires a homeomorphism of the underlying surface.

 A \emph{map}, in this paper, means an equivalence class of embeddings under topological isomorphism. That is, a map is not a particular embedding of a graph, but an embedded graph considered up to homeomorphism of the underlying surface. When a concrete embedding is needed, we call it an embedding of the map. Two maps are \emph{isomorphic} if they have topologically isomorphic embeddings. The notions defined for embeddings such as  vertices, edges, faces, etc. naturally extend to maps.  We use $\gr(M)$ to denote the graph of the map $M$.

For simple 3-connected graphs embedded in the plane, the definitions can be simplified. Indeed, by Whitney's theorem \cite{whitney19332}, any two embeddings of a simple 3-connected graph in the plane are topologically isomorphic. Thus the underlying graph already determines the map.  
However, we do not assume 3-connectivity or simplicity in this paper. For example, the same abstract graph may have non-isomorphic embeddings. Hence, all isomorphisms, automorphisms, and dualities used below are meant in the sense of topological isomorphism.

\subsection{On figures}
 We consider embeddings in the plane and in the projective plane. Mathematically we always consider planar embeddings as embeddings on a sphere, but the accompanying figures will be drawn in the plane. In some cases we would need to show centrally symmetric drawings on the sphere. Such drawings are rather hard to produce, so we trust the readers' imagination and show simple planar drawings instead.
 
 We depict projective planar embeddings in a disk, where opposite points of the boundary are identified; see, for example, Figure \ref{fig:q4} for a drawing of the complete graph on four vertices. If a figure has no dashed bounding circle around it, it is a planar drawing, or it represents a planar portion of the projective plane.

\section{Quadrangulations of the projective plane} \label{sec:quad}
 Let $\Sb^2$ denote the unit sphere in $\R^3$. We will think of the projective plane $\Pb^2$ as the topological space obtained by identifying antipodal pairs of points on $\Sb^2$. A \emph{cross-cap}, also known as a \emph{Möbius strip}, is the topological space created by deleting an open disk from $\Pb^2$. An \emph{open cross-cap} is a cross-cap minus its boundary. 
 
 In an embedding in the projective plane, each cycle of the graph is embedded as a closed curve. If this curve is not contractible, we say that the cycle is \emph{essential}.  

 A projective embedding divides the projective plane into \emph{faces}. Note that the interior of a face is not necessarily homeomorphic to an open disk, but if the graph is connected, then there are only two options: the interior of a face is either an open disk or an open cross-cap. It is also easy to see that if we have an embedding in $\Pb^2$ where each cycle of the graph is contractible, then the embedding is nothing more than a planar embedding where the interior of a cell is replaced with an open cross-cap. 

We say that a graph embedded on a surface $\Sigma$ is a \emph{quadrangulation} if it is a 2-cell embedding and each face has four distinct vertices and four edges on its boundary.  A quadrangulation $Q$ is \emph{simple} if it contains no loop edges and no parallel edges. Throughout this paper we consider simple quadrangulations. Note that in a simple quadrangulation we allow two quadrangles to share more than one edge, and it is also possible that a pair of vertices occurs as the edge of a quadrangle and at the same time as the diagonal of another one. See, for example, the quadrangulation of the projective plane in Figure \ref{fig:q4a}, which we denote by $Q_4$. In $Q_4$ every pair of vertices is an edge of a quadrangle and the diagonal of a different quadrangle. Another quadrangulation is $Q_6$, shown in Figure \ref{fig:q4b}.

\begin{figure}[!ht]
    \centering
     \begin{subfigure}[t]{0.2\textwidth}
    \centering
    \includegraphics[width=1.\linewidth]{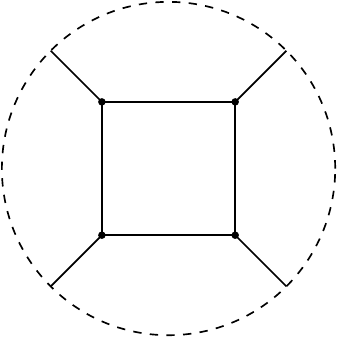}
    \caption{$Q_4$}
    \label{fig:q4a}
  \end{subfigure}
    \begin{subfigure}[t]{0.2\textwidth}
    \centering
    \includegraphics[width=1.\linewidth]{figures/q6_nocap.pdf}
    \caption{$Q_6$}
    \label{fig:q4b}
  \end{subfigure}

    \caption{The quadrangulations of $\Pb^2$ called $Q_4$ and $Q_6$. 
    }
    \label{fig:q4}
\end{figure}
 
 A \emph{quadrangulation map} is an equivalence class of topologically equivalent quadrangulations. We focus on simple non-bipartite quadrangulations of the projective plane, and we call such an embedding an \emph{NBQP embedding}.  We call the corresponding map an \emph{NBQP map}. Graphs that can be embedded in $\Pb^2$ as an NBQP will be called \emph{NBQP graphs}. 

    Note that an NBQP graph might have topologically non-equivalent NBQP embeddings; see Figure \ref{fig:isomdif} for an example. (On the other hand, we will see in Lemma \ref{lem:uniquestrduality} that graphs of NBQP maps without a separating 4-cycle admit a unique NBQP map.)

\begin{figure}[!ht]
    \centering
    \includegraphics[width=0.5\linewidth]{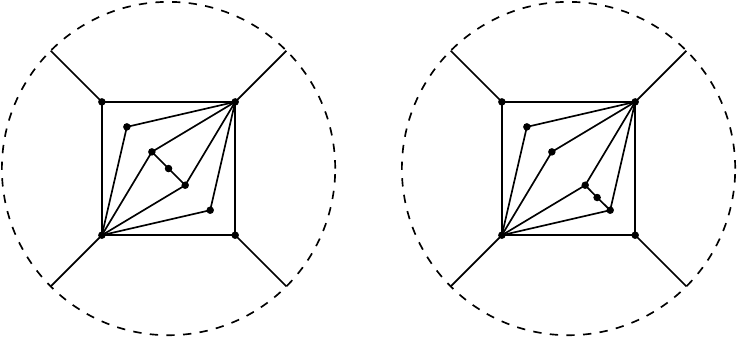}
    \caption{Two combinatorially different quadrangulations of $\Pb^2$ with isomorphic underlying graphs.}
    \label{fig:isomdif}
\end{figure}

The following lemma plays a central role in the study of quadrangulations.

\begin{lemma}[Negami and Nakamoto, Lemma 9 in \cite{NEG93}]\label{lem:cyclepar} Let $G$ be a graph embedded on a closed surface $\Sigma$ whose faces all have even size. If two cycles of $G$ have homotopic embeddings, then their lengths have the same parity.
\end{lemma}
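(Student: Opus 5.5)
We work in $\mathbb{Z}/2$-homology and use the parity of closed walks. Let $G$ be embedded on $\Sigma$ with every face of even size, and let $C_1,C_2$ be cycles whose embeddings are homotopic closed curves. The idea is that the symmetric difference (mod 2 sum) of their edge sets behaves like a sum of face boundaries, and every face boundary has even length. Working mod 2 avoids all orientation issues, which matters because $\Sigma$ may be non-orientable, e.g. $\Pb^2$.

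First, homotopic closed curves are homologous over $\mathbb{Z}/2$. Next, the embedding is 2-cell (as in the quadrangulation setting), so the cellular chain complex of the cell structure given by $G$ computes $H_1(\Sigma;\mathbb{Z}/2)$. The cellular 1-cycles $C_1$ and $C_2$ therefore differ by a cellular boundary:
\[
C_1+C_2=\partial\Big(\sum_{f\in A} f\Big)
\]
for some set $A$ of faces, with all sums taken mod 2 in the edge space.

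Now define the linear functional $\lambda$ on the $\mathbb{Z}/2$ edge space that sends each edge to $1$; it counts edges mod 2. For a face $f$, the boundary $\partial f$ counts each edge with its multiplicity along the facial walk, taken mod 2. Hence $\lambda(\partial f)\equiv |f| \equiv 0 \pmod 2$, since an edge traversed twice contributes $0$ both to $\partial f$ and to the size mod 2. It follows that
\[
|C_1|+|C_2|\equiv \lambda(C_1+C_2)\equiv 0 \pmod 2,
\]
which is the claim.

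The main obstacle is the non-2-cell case, if the lemma is meant for general embeddings. In the projective plane with a connected graph, a face may be an open cross-cap (as noted earlier in the paper). To handle this, I would refine the embedding: add edges inside each non-disk face to cut it into disks, and subdivide so that the new faces have even size. For instance, a cross-cap face can be cut along an essential arc between two boundary vertices. This arc can be drawn as a path of suitable length so that both resulting face-walks have even length. That is possible because the parity of the original face length is even and the path length is free to choose.

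The cycles $C_1,C_2$ are unchanged by this refinement and remain homotopic, so the 2-cell argument applies. A disconnected graph is handled the same way, by adding connecting paths. Alternatively, since the paper only applies the lemma to quadrangulations, which are 2-cell by definition, one can restrict to that case.
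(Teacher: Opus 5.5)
The paper gives no proof of this lemma. It is quoted from Negami and Nakamoto and used as a black box, so your argument has to stand on its own, and it does. In the 2-cell case, which is the only case the paper ever uses (quadrangulations and their lifts), every step is sound:
\begin{itemize}
\item freely homotopic curves are $\mathbb{Z}/2$-homologous;
\item cellular homology writes $C_1+C_2$ as a mod-2 sum of face boundaries;
\item the edge-counting functional $\lambda$ vanishes on each $\partial f$, because $\lambda(\partial f)\equiv |f|$.
\end{itemize}
Your homological route also yields a stronger and more usable statement than the one cited. Parity depends only on the $\mathbb{Z}/2$-homology class. The argument also applies verbatim to closed walks, since an edge traversed twice contributes $0$ both to the chain and to the length mod 2. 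That walk version is exactly what the paper silently needs in the proof of Lemma~\ref{lem:bqlift}, where the lemma is applied to walks rather than cycles.

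Two remarks on your non-2-cell paragraph. First, cutting a cross-cap face along an essential arc of length $\ell$ does not produce two faces. It produces a single disk whose boundary walk traverses the new path twice, so its size is $|f|+2\ell$, which is even for every $\ell$. The freedom to choose $\ell$ is only needed for arcs that split a face into two. There the two new sizes sum to $|f|+2\ell$, so they have the same parity, and a suitable $\ell$ makes both even.

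Second, you can avoid the refinement altogether. Consider the exact sequence $H_2(\Sigma,G;\mathbb{Z}/2)\to H_1(G;\mathbb{Z}/2)\to H_1(\Sigma;\mathbb{Z}/2)$. Excision identifies the first group with a direct sum of copies of $\mathbb{Z}/2$, one mod-2 fundamental class per face. Each class maps to the mod-2 sum of all boundary walks of that face. So the kernel is spanned by full face boundaries, and your $\lambda$ argument proves the lemma for arbitrary embeddings, with the paper's definition of face size.
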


\begin{corollary}\label{cor:nbqp-parity}
Let $Q$ be an NBQP map. A cycle of $Q$ is contractible if and only if it has even length. Equivalently, a cycle of $Q$ is essential in $\mathbb P^2$ if and only if it has odd length.
\end{corollary}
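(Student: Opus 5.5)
We prove Corollary \ref{cor:nbqp-parity} by applying Lemma \ref{lem:cyclepar} to the NBQP map $Q$, using the fact that $\pi_1(\Pb^2)\cong \mathbb Z/2$. Every face of $Q$ is a quadrangle, so all faces have even size and the lemma applies. Since $\pi_1(\Pb^2)$ has only two elements, every closed curve is homotopic either to a constant loop or to a fixed generator. Any two contractible cycles are freely homotopic to each other, and so are any two essential cycles. Hence, by Lemma \ref{lem:cyclepar}, all contractible cycles of $Q$ have the same length parity $p_0$, and all essential cycles share a parity $p_1$.

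Next we identify $p_0$. A facial $4$-cycle is contractible, since it bounds an open disk because the embedding is 2-cell. Hence $p_0$ is even. This step needs at least one face, which $Q$ has as a quadrangulation.

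Next we show that $p_1$ is odd. Since $Q$ is non-bipartite, it contains an odd cycle $C$. As $C$ has odd length and $p_0$ is even, $C$ cannot be contractible. So $C$ is essential and $p_1$ is odd. The corollary follows: a cycle is contractible if and only if its length is even, and essential if and only if its length is odd.

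The only delicate point is the use of Lemma \ref{lem:cyclepar}. Its notion of ``homotopic'' must allow free homotopy of cycles with arbitrary orientation and basepoint. In $\Pb^2$ this causes no difficulty: the fundamental group is abelian of order two, so free homotopy classes coincide with elements of $\mathbb Z/2$, and reversing orientation does not change the class. This gives exactly the two classes used above. No other obstacle arises.
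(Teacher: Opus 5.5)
Your proof is correct and follows essentially the same route as the paper's. A facial quadrangle together with Lemma \ref{lem:cyclepar} makes every contractible cycle even; non-bipartiteness supplies an odd cycle, which must therefore be essential; and since all essential cycles in $\Pb^2$ are homotopic, Lemma \ref{lem:cyclepar} makes them all odd. Your remark that $\pi_1(\Pb^2)\cong\mathbb Z/2$ makes free homotopy independent of basepoint and orientation just makes explicit what the paper uses implicitly.
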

\begin{proof}
Since $Q$ is a 2-cell embedding, every facial $4$-cycle of $Q$ is contractible and even. By Lemma~\ref{lem:cyclepar}, every contractible cycle has even length. Since $Q$ is non-bipartite, it contains an odd cycle, and such a cycle cannot be contractible; hence it is essential. Finally, all essential simple closed curves in the projective plane are homotopic. Applying Lemma~\ref{lem:cyclepar} again, every essential cycle has the same parity as the odd one, and is therefore odd.
\end{proof}

Next, we derive an upper bound on the number of edges in a projective quadrangulation.

\begin{lemma}\label{lem:2nminus2isquad}
    Let $G=(V,E)$ be a simple graph embedded in the projective plane such that each cell has size at least 4. Then $|E|\le2|V|-2$ with equality if and only if $G$ is a quadrangulation. 
\end{lemma}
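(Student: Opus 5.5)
The plan is to apply Euler's formula for the projective plane together with double counting of edge–face incidences. The main subtlety is that the embedding need not be 2-cell, and $G$ need not be connected. In that case Euler's formula only holds as an inequality, and faces may fail to be disks. So the first step is to reduce to the connected, 2-cell case, or to use the general inequality $|V|-|E|+|F|\ge 1+c-1$ with equality exactly for 2-cell embeddings of connected graphs. Here $c$ is the number of components, and Euler characteristic $\chi(\Pb^2)=1$. Concretely, I would use $|V|-|E|+|F|\ge \chi(\Pb^2)=1$ for a connected graph, with equality iff the embedding is cellular. For several components I would add edges joining components inside a common face. This only increases $|E|$ and does not decrease face sizes below 4, provided the graph has enough vertices. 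Alternatively, I would argue componentwise via the bound $|V|-|E|+|F|\ge 2-\ldots$; I will settle on the edge-adding reduction.

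For the counting step, each face has size at least 4, where size counts boundary edges with multiplicity. Each edge contributes exactly two to the total face-size sum, since it is counted twice when it appears twice on the same face. This gives $4|F|\le 2|E|$, i.e. $|F|\le |E|/2$. Plugging this into $1\le |V|-|E|+|F|$ gives $1\le |V|-|E|/2$, hence $|E|\le 2|V|-2$.

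For the equality case, equality forces two things. First, Euler's formula must be tight, which means $G$ is connected and the embedding is 2-cell. Second, every face must have size exactly 4. It remains to check that a 2-cell face of size 4 in a simple graph has four distinct vertices and four edges, as the definition of quadrangulation requires. A closed boundary walk of length 4 with a repeated vertex would be a walk $u,v,u,w$ or would contain a loop or a repeated edge. Simplicity excludes loops and parallel edges. A walk $u v u w u$ traverses the edges $uv$ and $uw$ each twice, so the face boundary is a tree-like walk around a path $v-u-w$. For a disk face this happens only if the whole graph is that path. Then the face is the only face, its complement is the whole of $\Pb^2$ minus a tree, and that is not a disk — a contradiction. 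The converse direction is immediate: for a quadrangulation, Euler's formula holds with equality and $4|F|=2|E|$, giving $|E|=2|V|-2$.

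I expect the main obstacle to be the careful handling of non-cellular embeddings and disconnected graphs. In particular, I must make sure the reduction step (adding edges, or using the inequality form of Euler's formula) is justified and preserves the face-size hypothesis. The degenerate small cases, such as very few vertices, would be checked directly.
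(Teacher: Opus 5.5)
Your argument is correct, and its core is the paper's: the Euler inequality $|V|-|E|+|F|\ge 1$ for a connected graph in $\Pb^2$ (cells are disks or one cross-cap, with equality exactly for 2-cell embeddings), combined with $4|F|\le 2|E|$. You differ from the paper in the reduction and in the equality case.

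\textbf{Reduction.} The paper proves the bound for 2-connected $G$ and then says the general case follows by applying the argument to each block. You note that connectivity already suffices for the Euler inequality. You then handle $c\ge 2$ components by adding $c-1$ edges inside common faces, which keeps $G$ simple, keeps $|F|$ unchanged, and only enlarges cell sizes. Your reduction is the more robust one, because blocks do not inherit the hypothesis that every cell has size at least $4$. A bridge is a block whose only cell has size $2$. Worse, an octahedron drawn in a disk of $\Pb^2$, with a pendant edge added inside each of its eight faces, satisfies the hypothesis, yet its octahedral block has $12>2\cdot 6-2$ edges. So the blockwise step cannot be applied verbatim. Your reduction instead gives $|E|\le 2|V|-2-(c-1)$, which also shows at once that equality forces connectivity; you should state this explicitly.

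\textbf{Equality case.} You verify the ``four distinct vertices'' clause of the definition of a quadrangulation, which the paper passes over. This can be stated more cleanly. A boundary walk $u,v,u,v$ is impossible, since an edge has only two sides. For a walk $u,v,u,w$, the corners force $\deg v=\deg w=1$ and $\deg u=2$. Hence the connected graph $G$ is a path on three vertices, contradicting $|E|=2|V|-2$.

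Finally, the caveat ``provided the graph has enough vertices'' is unnecessary: an edge joining two different components is never a loop or a parallel edge.
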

\begin{proof}
    First assume that $G$ is 2-connected. Let $F_i$ denote the $i$-th cell of the embedding. Let $\chi$ denote the Euler characteristic of a topological space. Using the generalized Euler formula we obtain that for a 2-connected graph embedded in $\Pb^2$ we have

    $$|V|-|E|+\sum_i\chi (F_i)=\chi(\Pb^2)=1.$$

    Since each $F_i$ is a disk or a cross-cap, we have $\chi(F_i)=1$ or $\chi(F_i)=0$ for each $i$. Hence,

    $$|V|-|E|+|F|\ge 1.$$

    Equality holds if and only if the embedding is a 2-cell embedding.   As $G$ has no cell of size less than 4, we have $4|F|\le 2|E|$ with equality if and only if each cell is bounded by 4 edges. Hence,

     $$ 4(1-|V|+|E|)\le 4|F|\le 2|E|.$$

    Therefore,
    
    $$|E|\le 2|V|-2.$$

    Equality holds if and only if each face is bounded by exactly 4 edges and the embedding is a 2-cell embedding, that is, if and only if $G$ is a quadrangulation.

    For graphs that are not necessarily 2-connected, the lemma follows from applying the same argument for each block of the graph.

\end{proof}

 Lemma \ref{lem:2nminus2isquad} implies a natural sparsity condition for quadrangulations; no subset of the vertices can span too many edges. For NBQP maps we make an even sharper observation.

\begin{lemma}\label{lem:maximal}
    Let $Q'=(V,E)$ be the subgraph of an NBQP graph $Q$. Then $Q'$ has $2|V|-2$ edges if and only if $Q'$ is an NBQP graph itself.
\end{lemma}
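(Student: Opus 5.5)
Here $Q'$ is a subgraph of $Q$; it inherits the embedding of $Q$ in $\Pb^2$, and we write $V$ and $E$ for its vertex and edge sets. The plan is to show that this inherited embedding is itself a non-bipartite quadrangulation whenever $|E|=2|V|-2$. The converse is immediate: if $Q'$ is an NBQP graph, then Lemma~\ref{lem:2nminus2isquad}, applied to an NBQP embedding of $Q'$, gives exactly $2|V|-2$ edges.

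For the forward direction, assume $|E|=2|V|-2$. We check that Lemma~\ref{lem:2nminus2isquad} applies to $Q'$ with the embedding inherited from $Q$, so we need every cell of this embedding to have size at least $4$.
\begin{itemize}
\item $Q'$ is simple, being a subgraph of a simple graph.
\item Each cell of $Q'$ is a union of faces of $Q$ (together with deleted edges and vertices). Its boundary walk therefore consists of edges of the bipartite-like local structure of $Q$.
\item A cell of size $1$ or $2$ would require a loop or parallel edges, which $Q'$ does not have.
\item A cell of size $3$ would be bounded by a triangle. If the cell is a disk, the triangle is contractible, contradicting Corollary~\ref{cor:nbqp-parity}, which says contractible cycles are even. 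If the cell is a cross-cap, its boundary is the square of an essential curve, hence contractible. A face of odd size $3$ is then impossible as well: by the parity argument of Lemma~\ref{lem:cyclepar}, the boundary walk of a contractible region has even length.
\item Small degenerate cells, where the boundary walk is not a cycle (trees inside a cell and similar), only increase the size.
\end{itemize}
I expect this step to be the main obstacle. The facial walks of a non-2-cell embedding need not be cycles, so the parity argument has to be made for closed walks. The cleanest route is the edge-sum version of Lemma~\ref{lem:cyclepar}: a contractible closed walk is a sum of facial boundaries of $Q$ mod 2, so its length is even. The boundary of a disk cell is null-homotopic. The boundary of a cross-cap cell is twice an essential curve, so it is also null-homotopic in $\Pb^2$.

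Once all cells have size at least $4$, Lemma~\ref{lem:2nminus2isquad} with equality shows that the inherited embedding of $Q'$ is a quadrangulation of $\Pb^2$. In particular it is a 2-cell embedding, and every face is a $4$-cycle with four distinct vertices. The lemma's proof handles non-2-connected $Q'$ block by block, and equality forces the relevant conditions on each block.

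It remains to see that $Q'$ is non-bipartite. Its faces are disks bounded by contractible 4-cycles. If $Q'$ were bipartite, every cycle of $Q'$ would be even. A 2-cell embedding in $\Pb^2$ must contain an essential cycle, since otherwise the embedding would be planar-like, as remarked in Section~\ref{sec:quad}, and some cell would be a cross-cap. But that essential cycle is also a cycle of $Q$, so by Corollary~\ref{cor:nbqp-parity} it is odd, a contradiction. Hence $Q'$ is an NBQP embedding, and $Q'$ is an NBQP graph.
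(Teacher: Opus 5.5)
Your proposal is correct and follows the paper's proof essentially step for step: the converse via Lemma~\ref{lem:2nminus2isquad}, excluding cells of size at most $3$ in the inherited embedding, the equality case of Lemma~\ref{lem:2nminus2isquad}, and non-bipartiteness from an essential, hence odd, cycle. The only variation is that you rule out triangular cells via evenness of contractible boundary walks rather than the paper's one-sidedness remark; your mod-$2$ face-sum version is the cleaner justification, since it shows directly that every cell of $Q'$ has even size.

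Like the paper, you tacitly exclude the case $|V|=1$, where a single vertex satisfies $|E|=2|V|-2$ but is not an NBQP graph, so your claim that all cells have size at least $4$ fails there. This is harmless for the later use in Theorem~\ref{thm:3connected}, where $|S|\ge 4$.
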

\begin{proof}
If $Q'$ is an NBQP graph, then by Lemma \ref{lem:2nminus2isquad} it has exactly $2|V|-2$ edges. 

In the other direction, suppose $Q'$ has $2|V|-2$ edges. Consider the embedding of $Q'$ inherited from the NBQP embedding of $Q$. Corollary \ref{cor:nbqp-parity} implies that a cycle of $Q$ is essential if and only if it has an odd length. This also means that $Q'$ cannot have a cell of size 3: walking along a cycle of length 3 we would end up on the opposite side of the starting edge. As $Q$ is simple we do not have loop edges or parallel edges, hence there are no cells of size 1 or 2.   

Hence, each cell of $Q'$ has size at least 4. By Lemma \ref{lem:2nminus2isquad} we obtain that $Q'$ (with the inherited embedding) is a quadrangulation of the projective plane. As it is a quadrangulation, it is a 2-cell embedding. Hence there must be a cycle in $Q'$ that is an essential cycle, otherwise the graph would lie in a planar part of $\Pb^2$, and the outer face would not be homeomorphic to a disk. Since $Q'$ inherited the embedding from $Q$, this cycle must be an odd cycle, and hence $Q'$ is non-bipartite. Therefore, $Q'$ is an NBQP graph.
\end{proof}

\subsection{The canonical double cover and the cover map}\label{subs:rotation}

 The \emph{canonical double cover}, denoted by  $H \times K_2$, of a graph $H=(V,E)$ is the graph where we split each vertex $i$ of $H$ into two copies $i_1,i_2$ and we replace each edge $ij$ of $H$ by two edges $i_1j_2$ and $i_2j_1$; see Figure \ref{fig:doublecover}. Throughout the paper, we will use $V_1$ and $V_2$ to denote the sets  $\{i_1\mid i\in V\}$ and $\{i_2\mid i\in V\}$, respectively. The canonical double cover of projective maps is a well-studied topic, see for example the works of Negami \cite{NE86,NE92}.

\begin{figure}[!h]
    \centering
    \includegraphics[width=0.7\linewidth]{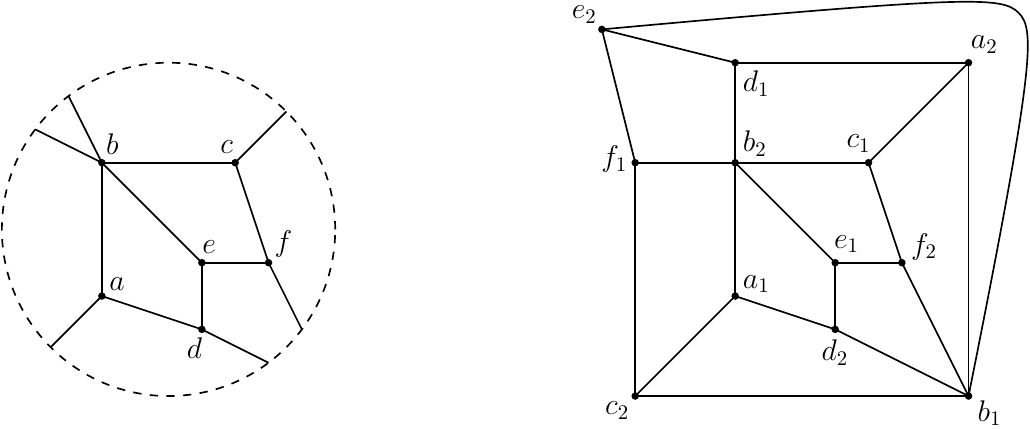}
    \caption{A quadrangulation of the projective plane and its canonical double cover on the plane.}
    \label{fig:doublecover}
\end{figure}

The canonical double cover has a number of nice properties. For example, it is always bipartite by construction. It is also clear that the number of vertices and the number of edges are doubled in the process.  $H\times K_2$ comes with a \emph{deck involution} $\iota$ which swaps $i_1$ with $i_2$ for each $i\in V$. The deck involution satisfies $v=\iota(\iota(v))$, $v$ and $\iota(v)$ are never neighbors, and $v\iota(w)$ is an edge if and only if $w\iota(v)$ is an edge.

Another nice property of $ H\times K_2$ is that it is connected if and only if $H$ is connected and not bipartite. If $H$ is bipartite, then  $H\times K_2$ is just the union of two disjoint copies of $H$. Furthermore, if $H$ is connected, then there is a unique 2-coloring of $H\times K_2$ such that each pair $i_1,i_2$ is colored differently, namely the coloring with color classes $V_1=\{i_1\mid i\in V\}$ and $V_2=\{i_2\mid i\in V\}$.

The sphere $\Sb^2$ covers the projective plane $\Pb^2$ via the quotient map $\pi:\Sb^2\to\Pb^2$ that identifies antipodal pairs. Hence, we can \emph{lift} maps from $\Pb^2$ to maps on $\Sb^2$. The resulting map has twice as many vertices and twice as many edges, and the new map is antipodally symmetric. For a projective map $M$, let $\widetilde M$ denote the lifted map on the sphere.

A graph $H'=(V',E')$ is a \emph{covering graph} of $H$ if there is a covering map from the vertex set of $H'$ to the vertex set of $H$. That is, a map $f:V'\rightarrow V$ such that for each $i\in V'$ the neighborhood $N_{H'}(i)$ is mapped bijectively onto the neighborhood $N_{H}(f(i))$ in $H$. For example, if $M$ is a projective map, then $\gr(\widetilde M)$ is a covering graph of $\gr(M)$. We simply map both $i_1$ and $i_2$ to $i$ for each $i\in V$.

When we lift a graph embedding from the projective plane to the sphere, we obtain a planar embedding of a covering graph. The resulting covering graph depends on the embedding of $H$ in the projective plane. For example, if $C_3$ is embedded as a contractible cycle, then it is lifted to two disjoint copies of $C_3$, but if it is embedded as an essential cycle, then it is lifted to a $C_6$. In the next lemma we show a very useful property of NBQP maps: if we lift them, we get the canonical double cover as the covering graph. The following lemma follows from a more general theorem of Negami, see \cite{NE92}. To keep this paper self-contained, we include an elementary proof for this case. 

\begin{lemma}[Special case of Theorem 4 in \cite{NE92}]\label{lem:bqlift}
Let $Q$ be an NBQP embedding. If we lift $Q$ to the sphere, we obtain a planar embedding of the canonical double cover of the graph of $Q$. That is, $\widetilde Q$ is a planar map of $\gr (Q)\times K_2$. 
\end{lemma}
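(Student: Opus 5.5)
We prove that the lifted graph $\gr(\widetilde Q)$ is isomorphic, via a map commuting with the projections to $\gr(Q)$, to $\gr(Q)\times K_2$. The argument has two steps: show that $\widetilde Q$ is connected, then show that it is bipartite, and read off the canonical double cover from the resulting 2-coloring.

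\textbf{Setup.} Let $\pi:\Sb^2\to\Pb^2$ be the antipodal quotient. For each vertex $i$ of $Q$, the preimage $\pi^{-1}(i)$ consists of two antipodal points. These are the vertices of $\widetilde Q$ over $i$, and the antipodal map $a$ swaps them. Each edge $ij$ of $Q$ lifts to two antipodal edges. Each lifted edge joins one preimage of $i$ to one preimage of $j$, and the two lifts together cover all four incidences exactly once. So $\gr(\widetilde Q)$ is a 2-fold covering graph of $\gr(Q)$ with $a$ as deck involution.

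\textbf{Connectivity.} The sphere is connected, $Q$ is a 2-cell embedding, and every face of $\widetilde Q$ is a lifted disk. Hence $\widetilde Q$ is a 2-cell embedding in $\Sb^2$, and its graph is connected.

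\textbf{Bipartiteness.} Every face of $\widetilde Q$ is the lift of a quadrangle of $Q$. A quadrangle is a disk, so it lifts to two disjoint quadrangles, and every face of $\widetilde Q$ has size 4. A planar map all of whose faces have even size is bipartite: the face boundaries generate the cycle space, so every cycle is even. Fix the resulting 2-coloring $V_1'\cup V_2'$, which is unique because the graph is connected.

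\textbf{Antipodal vertices get opposite colors.} This is the key step. Take $v$ over $i$. Since $Q$ is non-bipartite, it contains an odd cycle $C$ through $i$. By Corollary~\ref{cor:nbqp-parity}, $C$ is essential. Lift $C$ starting at $v$. Because $C$ is not contractible, its lift is not a closed loop, so the lifted path ends at $a(v)$. This path has odd length, so $v$ and $a(v)$ lie in different color classes. Note that $C$ need not be a simple cycle through $i$ from the outset; if necessary, use a closed walk through $i$ that consists of an odd cycle together with a path to it traversed twice. Such a walk is homotopic to the cycle up to conjugation, and it still lifts to a path from $v$ to $a(v)$.

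\textbf{Conclusion.} Name the preimage of $i$ in $V_1'$ as $i_1$ and the other as $i_2$. By the covering property, each edge $ij$ of $Q$ lifts to two edges. Bipartiteness forces each lift to join opposite colors, and the two lifts together use both preimages of $i$ and of $j$. The lifts are therefore exactly $i_1j_2$ and $i_2j_1$. Since simplicity of $Q$ rules out loops and parallel edges, this gives a bijection $\gr(\widetilde Q)\cong\gr(Q)\times K_2$.

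The main care point is the lifting step in the antipodal-colors argument: the claim that the lift of an essential closed walk ends at the antipode. This is standard covering-space theory, since $\pi_1(\Pb^2)=\mathbb Z_2$ and the nontrivial element acts by the deck transformation $a$.
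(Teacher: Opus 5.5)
Your proof is correct, but it takes a different route from the paper in where the parity information comes from.

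The paper works entirely downstairs in $\Pb^2$. It uses the essential odd cycle to join $s_1$ and $s_2$, which gives connectivity of the lift. It then applies Lemma~\ref{lem:cyclepar} to projections of arbitrary walks in the lift: walks between $i_1$ and $i_2$ project to essential closed walks and are odd, while closed walks project to contractible ones and are even. This yields the labeling relative to a fixed $s_1$.

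You instead get the bipartition upstairs, from the purely planar fact that $\widetilde Q$ is a plane quadrangulation. You use the projective plane only to see that an odd closed walk lifts to a walk joining antipodes. This buys something: you never need the homotopy-parity lemma for closed walks. (The paper applies it to walks, although it is stated for cycles.)

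Your argument can also be shortened further.
\begin{itemize}
\item Corollary~\ref{cor:nbqp-parity} and the $\pi_1(\Pb^2)$ lifting argument can be dropped. Since $\widetilde Q$ is bipartite, the lift from $v$ of an odd closed walk at $i$ cannot close up, so it must end at $a(v)$, and it has odd length. This makes the proof independent of the Negami--Nakamoto lemma.
\item Your connectivity step is not actually used. Any proper 2-coloring of $\widetilde Q$ already puts $v$ and $a(v)$ in opposite classes, which is all the final read-off of $i_1j_2$, $i_2j_1$ needs. Connectivity of $\widetilde Q$ then follows from that of $\gr(Q)\times K_2$.
\end{itemize}

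The paper's route, in turn, keeps the argument uniform with the homotopy-parity tool it relies on throughout Section~\ref{sec:quad}.
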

\begin{proof}

    Let $H=\gr(\widetilde Q)$ be the cover graph of $Q$ that we obtain by lifting the embedding. Since $Q$ is connected and non-bipartite, there is an odd cycle $C$ in $Q$ and we fix a vertex $s$ of this cycle. Let $s_1,s_2\in V(H)$ be the two vertices covering $s$.  Since $C$ is an essential cycle, there is a walk in $H$ connecting $s_1$ and $s_2$. As $Q$ is connected, this implies that $H$ is also connected. When we lift the drawing of $Q$, each vertex $i$ is mapped to two vertices $i_1,i_2$ on the sphere. Any walk between $i_1$ and $i_2$ corresponds to an essential curve in the projective plane. By Lemma \ref{lem:cyclepar}, all these walks must have an odd length. Hence, by switching the roles of $i_1$ and $i_2$ if necessary, we can assume that for each $i$ any walk between $i_1$ and the fixed $s_1$ vertex has even length, and any walk between $i_2$ and $s_1$ has odd length. This immediately shows that $H$ is isomorphic to $ {\gr(Q)}\times K_2$, as each edge must be of the form $k_1l_2$ for some $k,l\in V(Q)$.

\end{proof}

\subsection{Diagonal map}
Suppose we have an NBQP map $Q=(V,E)$. Its \emph{diagonal graph} is the graph on vertex set $V$ where two vertices are joined with as many parallel edges as there are faces of $Q$ in which they form a diagonal. That is, if $ac$ appears as a diagonal in more than one face of $Q$, then there are parallel edges between $a$ and $c$. For each $Q$ we can create a drawing (but not an embedding) of the diagonal graph in the projective plane in the following way. We take an NBQP embedding of $Q$ and then draw each diagonal in the corresponding quadrangle (Figure \ref{fig:diagrapha}). In such a drawing the only intersections between edges are the ones between two diagonals of a given quadrangle.   

\begin{figure}[!ht]
    \centering
    \begin{subfigure}[c]{0.24\textwidth}
    \centering
    \includegraphics[width=0.99\linewidth]{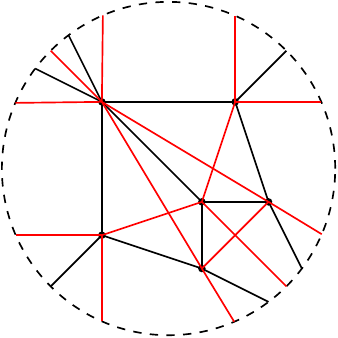}
    \caption{Diagonal graph drawn in $\Pb^2$.}
    \label{fig:diagrapha}
  \end{subfigure}
    \begin{subfigure}[c]{0.24\textwidth}
    \centering
    \includegraphics[width=0.99\linewidth]{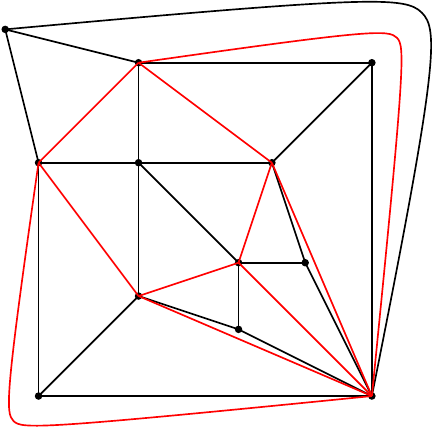}
    \caption{First copy of $\diag(Q)$ on $\widetilde Q$.}
    \label{fig:diagraphb}
  \end{subfigure}
    \begin{subfigure}[c]{0.24\textwidth}
    \centering
    \includegraphics[width=0.99\linewidth]{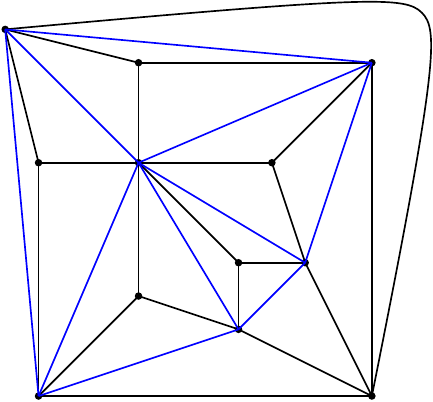}
    \caption{Second copy of $\diag(Q)$ on $\widetilde Q$.}
    \label{fig:diagraphc}
  \end{subfigure}
    \begin{subfigure}[c]{0.24\textwidth}
    \centering
    \includegraphics[width=0.99\linewidth]{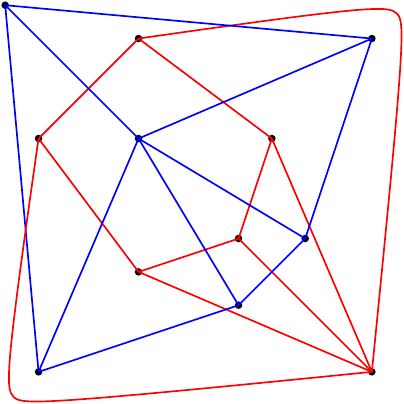}
    \caption{Self-dual embedding of the two copies of $\diag(Q)$.}
    \label{fig:diagraphd}
  \end{subfigure}
  
    \caption{Steps of creating a SISD map from an NBQP map.}
    \label{fig:diaggraph}
\end{figure}

We claim that the diagonal graph has a natural planar embedding. Indeed, by Lemma \ref{lem:bqlift} if we lift $Q$ to the sphere we get an embedding of ${\gr(Q)}\times K_2$. Each diagonal $ij$ is lifted to two diagonals, $i_1j_1$ and $i_2j_2$. Hence, if we lift the projective drawing of the diagonal graph to the sphere, we get two disjoint copies of the diagonal graph, one on vertex set $V_1$ and the other on vertex set $V_2$. In each quadrangle on the sphere one diagonal is in one of the copies and the other one is in the other. This implies that both copies of the diagonal graph are planar embeddings, and that they are antipodally symmetric. We define the \emph{diagonal map of $Q$}, denoted by $\diag(Q)$, as the planar map on $V_1$. Antipodally symmetric embeddings are topologically equivalent, hence the ambiguity of the lift does not cause any problems.

 The two copies of $\diag(Q)$ are clearly dual to each other; see Figure \ref{fig:diagraphd}. Therefore $\diag(Q)$ is a self-dual map on the sphere. This construction is actually one of the six constructions used by Archdeacon and Richter to generate all 3-connected simple self-dual planar graphs \cite{archdeacon1992construction}. We show that the self-duality is \emph{strongly involutive}, i.e., there is a self-duality $\tau$ that satisfies the following conditions: $\tau^2=\id$, and for every vertex $u$ we have $ u \notin \tau (u)$.
 
\begin{lemma}\label{lem:diagissisd}
    If $Q$ is an NBQP map, then the diagonal map $\diag(Q)$ is a SISD map.
\end{lemma}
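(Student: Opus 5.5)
We will build the self-duality explicitly from the antipodal (deck) involution $\iota$ of $\widetilde Q$. Let $D_1=\diag(Q)$ be the copy of the diagonal map on $V_1$ and $D_2$ the copy on $V_2$. Both are drawn on the sphere inside the quadrangles of $\widetilde Q$, and $\iota$ maps $D_1$ onto $D_2$. By Lemma~\ref{lem:bqlift}, $\widetilde Q$ is a planar embedding of $\gr(Q)\times K_2$, so each face of $\widetilde Q$ is a quadrangle $a_1b_2c_1d_2$. Its two diagonals $a_1c_1\in E(D_1)$ and $b_2d_2\in E(D_2)$ cross exactly once inside the quadrangle.

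The first step is to verify that $D_2$ is the geometric dual of $D_1$. This gives the standard bijection $*$ between $V(D_1)\cup E(D_1)\cup F(D_1)$ and $F(D_2)\cup E(D_2)\cup V(D_2)$. Each vertex $v_2\in V_2$ lies in a unique face of $D_1$, namely the union of the half-quadrangles around $v_2$ cut off by the $D_1$-diagonals. This is the usual radial-map picture: $\widetilde Q$ is a planar quadrangulation whose color classes are $V_1,V_2$. Each edge $a_1c_1$ corresponds to the unique crossing edge $b_2d_2$. This is exactly the classical correspondence recalled in the introduction, so $\widetilde Q=R(D_1)$ and $D_2=D_1^*$. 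We then define
\[
\tau(x)=\pi_1\bigl(\iota(x^*)\bigr),
\]
where $\iota$ is applied to the dual cell of $D_2$, giving a cell of $D_1$. Concretely, $\tau(v_1)$ is the face of $D_1$ containing $\iota$ of the point $v_2$, i.e.\ containing the vertex $v_2$ itself. Likewise $\tau(a_1c_1)=\iota(b_2d_2)=b_1d_1$, and $\tau(f)$ is the vertex $\iota(w_2)=w_1$, where $w_2$ is the $V_2$-vertex inside the face $f$. Since $\iota$ is a homeomorphism of $\Sb^2$ carrying $D_2$ to $D_1$, it is a map isomorphism $D_1^*=D_2\to D_1$. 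Hence $\tau$ is a self-duality.

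Next we check $\tau^2=\id$. On edges, $\tau(a_1c_1)=b_1d_1$. Applying $\tau$ again to the edge $b_1d_1$, whose quadrangle is $\iota$ of the original one, namely $a_2b_1c_2d_1$, gives $\iota(a_2c_2)=a_1c_1$. On vertices, $\tau(v_1)=f$ is the face containing $v_2$, and $\tau(f)=\iota(v_2)=v_1$. The face case follows symmetrically. Throughout, the identity used is $\iota\circ *=*\circ\iota$, which holds because $\iota$ preserves $\widetilde Q$ and swaps the colour classes.

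Finally, the condition $u\notin\tau(u)$ is the main point where non-bipartiteness and simplicity are used. The face $\tau(u_1)$ is the face of $D_1$ containing $u_2$. Its boundary vertices are exactly the $V_1$-vertices lying on quadrangles of $\widetilde Q$ at $u_2$, i.e.\ the vertices $c_1$ such that $u_2$ and $c_1$ are opposite in some face. This is the expected obstacle. We must show $u_1$ is never opposite $u_2$ in a face of $\widetilde Q$, equivalently that $u$ is never opposite itself in a face of $Q$. This holds because every face of the quadrangulation $Q$ has four distinct vertices by definition. We also have to make sure the boundary of the face is described correctly when the dual is degenerate, e.g.\ with parallel edges. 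For this we argue directly with the face of $\widetilde Q$ around $u_2$, so no 2-connectivity assumption is needed.
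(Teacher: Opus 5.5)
Your overall route is the paper's: $\tau$ comes from the antipodal deck involution $\iota$ of $\widetilde Q$, $D_2=D_1^*$ via $\widetilde Q=R(D_1)$, and $\tau^2=\id$ follows from $\iota^2=\id$. Those steps are fine, apart from the slip ``$\iota$ of the point $v_2$'', which should read $\iota(v_1)=v_2$.

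The gap is in the final step, which you yourself flag as the main point. You paraphrase the boundary of $\tau(u_1)$ as the vertices $c_1$ \emph{opposite} $u_2$ in some face, and then set out to show that $u_1$ is never opposite $u_2$. But $\widetilde Q$ is bipartite with colour classes $V_1,V_2$, so every face has the form $a_1b_2c_1d_2$ and opposite corners always have the same colour. A $V_1$-vertex is therefore never opposite a $V_2$-vertex, so the statement you prove is vacuous and does not give $u_1\notin\tau(u_1)$.

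Your first description was the correct one: the $V_1$-vertices on quadrangles at $u_2$ are exactly the \emph{neighbours} of $u_2$ in $\widetilde Q$. This is just $\widetilde Q=R(D_1)$, whose edges are vertex--face incidences. So $c_1\in\tau(u_1)$ iff $c_1u_2\in E(\widetilde Q)$, and what must be verified is $u_1u_2\notin E(\gr(Q)\times K_2)$, i.e.\ that $Q$ has no loop at $u$. This is how the paper concludes.

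Your translation ``equivalently, $u$ is never opposite itself in a face of $Q$'' is also not the relevant condition. A face $abcd$ of $Q$ with $a=c$ lifts to a quadrangle in which one lift of $a$ occupies two opposite corners, i.e.\ a loop of $D_1$. That is unrelated to $u\notin\tau(u)$.

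The repair is immediate with the fact you cited. If $u_1$ and $u_2$ lay on a common face $a_1b_2c_1d_2$ of $\widetilde Q$, then $u\in\{a,c\}\cap\{b,d\}$. So $u$ would sit in two \emph{adjacent} corners of the face $abcd$ of $Q$, contradicting that faces of $Q$ have four distinct vertices (equivalently, that $Q$ is loop-free).

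Note also that non-bipartiteness is not used at this step. It enters earlier, through Lemma~\ref{lem:bqlift}, which guarantees that $\widetilde Q$ is $\gr(Q)\times K_2$ with colour classes $V_1,V_2$ swapped by $\iota$.
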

\begin{proof}
Let $G_1$ and $G_2$ be the two copies of $\diag(Q)$ on vertex sets $V_1,V_2$, respectively. Let $\tau$ be the self-duality given by the antipodal map $\alpha(x)=-x$. Then $\tau$ maps $i_1$ to $i_2$ for all $i\in V$; on the lifted quadrangulation this is the deck involution of the double cover ${\gr (Q)}\times K_2 $. From the construction it is clear that $\alpha$ is indeed a self-duality homeomorphism; see Figure \ref{fig:diaggraph}. Furthermore, as $\alpha^2=\id$ we have $\tau^2=\id$. For lifted vertices $i_1,j_1\in V_1$, we have $i_1\in \tau(j_1)$ if and only if $i_1j_2$ is an edge of $\gr(Q)\times K_2$. As ${\gr(Q)}\times K_2$ is loop-free we have $i_1\notin \tau(i_1)$ for all $i\in V$. Therefore $\tau$ is strongly involutive.
\end{proof}

\begin{lemma}\label{lem:2connectivity}
If $Q$ is an NBQP map, then $\diag(Q)$ is 2-connected.     
\end{lemma}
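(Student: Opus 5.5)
We prove that $\diag(Q)$ is 2-connected by working on the sphere with the lifted quadrangulation $\widetilde Q$, which by Lemma \ref{lem:bqlift} is a planar embedding of $\gr(Q)\times K_2$. Write $G_1=\diag(Q)$ on $V_1$ and $G_2$ for its antipodal copy on $V_2$. Every face of $\widetilde Q$ is a quadrangle $i_1 j_2 k_1 l_2$ with exactly one diagonal $i_1k_1$ in $G_1$ and one $j_2l_2$ in $G_2$. Thus $\widetilde Q$ is the radial map of the planar map $G_1$, and $G_2$ is its dual. The plan is to reduce 2-connectivity of $G_1$ to simplicity of $\widetilde Q$ via Theorem \ref{thm:radialprops}(1), which says that $R(G)$ is a simple quadrangulation if and only if $G$ is loop-free and 2-connected.

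\textbf{Step 1: $\widetilde Q = R(G_1)$.} The vertices of $R(G_1)$ are $V_1$ together with the faces of $G_1$. Each face of $G_1$ contains exactly one vertex of $V_2$: the regions cut out by $G_1$ are unions of half-quadrangles glued along edges of $\widetilde Q$, and these pieces group around the $V_2$ vertices. An edge $i_1j_2$ of $\widetilde Q$ then says exactly that $i_1$ is incident to the face of $G_1$ containing $j_2$. We must also check that multiple incidences are accounted for correctly, so that $\widetilde Q$ is the radial map as a map and not merely as a graph. This identification is the standard bipartite-quadrangulation/radial-map dictionary recalled in the introduction. We only need $G_1$ to be connected, which holds because $\widetilde Q$ is connected ($\gr(Q)$ is connected and non-bipartite).

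\textbf{Step 2: simplicity.} $\widetilde Q$ is simple. It has no loops since it is bipartite. It has no parallel edges because the edges of $\gr(Q)\times K_2$ are the pairs $i_1j_2$, $i_2j_1$ coming from simple edges $ij$ of $Q$, so each pair of vertices is joined at most once. The graph is also a genuine quadrangulation: each lifted face has four distinct vertices because faces of $Q$ do, and the two lifts of a face are disjoint. Also, $G_1$ is loop-free, since a diagonal $i_1k_1$ joins distinct vertices $i\neq k$ of a quadrangle of $Q$. Theorem \ref{thm:radialprops}(1) then gives that $G_1=\diag(Q)$ is 2-connected.

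\textbf{Expected obstacle.} The main obstacle is Step 1, namely the rigorous identification of the face structure of $G_1$ with the vertices of $V_2$, including faces of $G_1$ whose boundary repeats vertices. If this is awkward, an alternative is a direct argument. Suppose $G_1$ had a cut vertex $v_1$. Then some face of $G_1$ meets $v_1$ twice on its boundary, so the corresponding vertex $w_2$ of $V_2$ is adjacent to $v_1$ through two distinct edges of $\widetilde Q$. Those edges would be parallel, contradicting the simplicity established in Step 2. Connectivity of $G_1$, needed for this alternative too, follows because $G_1\cup G_2$ together with the crossing points forms a connected subdivision.
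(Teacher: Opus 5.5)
Your proposal is correct and follows essentially the same route as the paper, which simply observes that $\diag(Q)$ is one of the diagonal graphs of the simple planar quadrangulation $\widetilde Q$ and applies Theorem~\ref{thm:radialprops}(1). You also spell out what the paper leaves implicit: the identification $\widetilde Q=R(\diag(Q))$, the simplicity of $\widetilde Q$ via Lemma~\ref{lem:bqlift}, and the connectivity of $\diag(Q)$ needed to invoke that theorem.
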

\begin{proof}
   
    By definition $\diag(Q)$ is one of the diagonal graphs of the planar quadrangulation $\widetilde{Q}$. Since $Q$ is simple, so is $\widetilde{Q}$.  By Theorem \ref{thm:radialprops} the diagonal graph of $\widetilde Q$ is 2-connected.

\end{proof}

\begin{corollary}\label{cor:diagis2connsisd}
If $Q$ is an NBQP map, then $\diag(Q)$ is a $2$-connected SISD map.
\end{corollary}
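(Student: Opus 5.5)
This corollary is a direct combination of the two preceding lemmas, so the plan is simply to put them together. Let $Q$ be an NBQP map. By Lemma \ref{lem:diagissisd}, the diagonal map $\diag(Q)$ is a SISD map. Its strongly involutive self-duality $\tau$ is induced by the antipodal map $\alpha$ acting on the lift $\widetilde Q$. This $\alpha$ swaps the two antipodal copies $G_1$ and $G_2$ of the diagonal graph, and these copies are dual to each other. By Lemma \ref{lem:2connectivity}, the same planar map $\diag(Q)$ is 2-connected. Both properties refer to the same object, namely the planar map on $V_1$ obtained from $\widetilde Q$. Hence $\diag(Q)$ is a $2$-connected SISD map.

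The only point needing care is that the two lemmas use the same notion of $\diag(Q)$ as a map, not just as a graph. Both take $\diag(Q)$ to be the planar map on $V_1$ inherited from the antipodally symmetric embedding of $\widetilde Q$. This choice is independent of the choice of lift, since antipodally symmetric embeddings are topologically equivalent.

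For Lemma \ref{lem:2connectivity}, the input is that $\widetilde Q$ is a simple planar quadrangulation. It is simple because $\widetilde Q$ is the planar map of $\gr(Q)\times K_2$ by Lemma \ref{lem:bqlift}, and the double cover of a simple graph is simple. The argument also uses that $\widetilde Q$ is the radial map of its diagonal map $\diag(Q)$, so that Theorem \ref{thm:radialprops} (item 1) applies to $\diag(Q)$. I would state this explicitly: in each quadrangle of $\widetilde Q$, one diagonal joins the two $V_1$-vertices and the other joins the two $V_2$-vertices. Consequently, the faces of $\diag(Q)$ correspond to the $V_2$-vertices, and the radial map of $\diag(Q)$ recovers $\widetilde Q$.

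I expect no real obstacle, since the corollary is a conjunction of earlier results. The mildly delicate step is the radial-map identification just described, which justifies invoking Theorem \ref{thm:radialprops}.
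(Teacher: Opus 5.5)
Your proposal is correct and matches the paper: the corollary is stated there without separate proof, as the immediate conjunction of Lemma \ref{lem:diagissisd} (SISD) and Lemma \ref{lem:2connectivity} (2-connectivity). Your remarks on why $\widetilde Q$ is simple and why $\widetilde Q = R(\diag(Q))$, which justifies applying Theorem \ref{thm:radialprops}, spell out what the paper's proof of Lemma \ref{lem:2connectivity} uses implicitly; the paper states that identification explicitly later, as part c) of Theorem \ref{thm:commute}.
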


\section{Strongly involutive self-dual graphs}\label{sec:sisdgraphs}

First, two simple but useful observations.  

\begin{lemma}\label{lem:selfdualloopfree}
    Any 2-connected self-dual planar graph is loop-free. 
\end{lemma}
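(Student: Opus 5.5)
The plan is to use planar duality: a loop in $G$ corresponds to a bridge (cut-edge) in $G^*$. Since $G$ is self-dual, $G^*$ is isomorphic to $G$ as a map, so a loop in $G$ would yield a bridge in $G$ itself. I will then show that a 2-connected graph cannot have a bridge, up to a small degenerate case that must be handled separately.

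\textbf{Step 1: loops dualize to bridges.} Suppose $e$ is a loop in $G$ at a vertex $v$. By the Jordan curve theorem, the closed curve formed by $e$ separates the sphere into two regions, each containing a face of $G$ incident to $e$. These two faces are therefore distinct, say $f_1\neq f_2$, and the dual edge $e^*$ joins $f_1$ and $f_2$.

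\textbf{Step 2: $e^*$ is a bridge of $G^*$.} Every other dual edge connects two faces lying on the same side of $e$, since edges of $G$ other than $e$ do not cross the curve $e$. Hence removing $e^*$ separates the faces inside the loop from the faces outside it, so $e^*$ is a bridge of $G^*$. The isomorphism $G\cong G^*$ then gives a bridge in $G$.

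\textbf{Step 3: a bridge contradicts 2-connectivity.} If $G$ has a bridge $xy$ and at least three vertices, then one endpoint, say $x$, has another neighbour. In that case $x$ is a cut vertex, contradicting 2-connectivity.

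\textbf{Step 4: the degenerate case.} This is the only real obstacle: the convention for 2-connectivity of very small graphs. If $G$ has at most two vertices, I use the standard convention that a 2-connected graph has at least three vertices, or else that a graph with a loop is not considered 2-connected, since under the block convention a loop forms its own block. Concretely, the small cases are $K_2$, which has no loop, and a single vertex with loops, which is excluded by the convention. If the paper's convention allows multigraphs on two vertices, such as a 2-cycle, these are loop-free anyway; the other small configurations with a loop have a bridge dual, and the isomorphism carries that bridge back to $G$, so they are not 2-connected.

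Alternatively, I could argue directly: a loop at $v$ with edges both inside and outside it makes $v$ a cut vertex. If instead one side of the loop is empty, the adjacent face is a face of size $1$, whose dual vertex has degree $1$. That degree-$1$ vertex is carried by the isomorphism to a degree-$1$ vertex of $G$, which is impossible in a 2-connected graph with at least three vertices.
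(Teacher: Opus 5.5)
Your proof is correct and is the same as the paper's: a loop of $G$ dualizes to a cut-edge of $G^*$, the self-duality carries it to a cut-edge of $G$, and a cut-edge is incompatible with 2-connectivity. Your extra steps (the Jordan-curve justification, the bridge-implies-cut-vertex argument, and the small-case conventions) make explicit what the paper's two-line proof leaves implicit. The convention point genuinely matters: a vertex with a loop and a pendant edge is self-dual and has no cut vertex.
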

\begin{proof}
    For a planar graph $G$, any loop edge corresponds to a cut-edge in the dual graph. If $G$ is self-dual, this implies that there is a cut edge in $G$ itself, and hence $G$ is not 2-connected.
\end{proof}

\begin{lemma}\label{lem:selfdualsimple}
    Any 3-connected self-dual planar graph is simple. 
\end{lemma}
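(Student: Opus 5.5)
Since a 3-connected graph is in particular 2-connected, Lemma~\ref{lem:selfdualloopfree} already shows that $G$ is loop-free. It remains to exclude parallel edges, and I would do this by passing to the dual, just as in the proof of Lemma~\ref{lem:selfdualloopfree}. The key observation is that parallel edges in a planar map correspond to a $2$-edge-cut in the dual. If $e$ and $f$ are two parallel edges between $u$ and $v$, the closed curve formed by $e\cup f$ is a simple closed curve on the sphere. By the Jordan curve theorem it separates the sphere into two regions. Each region contains at least one face of $G$, because each side of $e$ (and of $f$) is incident to a face.

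In $G^*$, the only edges crossing this curve are $e^*$ and $f^*$. Hence removing $\{e^*,f^*\}$ disconnects $G^*$ into the face-vertices on the two sides, so $\{e^*,f^*\}$ is an edge cut of size at most $2$ in $G^*$. Since $G$ is self-dual, $G^*\cong G$ as maps, and in particular as graphs. So $G$ itself has an edge cut of size at most $2$.

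The remaining step, which needs the most care, is to turn this $2$-edge-cut into a contradiction with $3$-connectivity. Vertex $3$-connectivity implies $3$-edge-connectivity, by Whitney's inequality $\kappa\le\lambda$, as long as the graph has at least $4$ vertices. So the generic case is settled and only degenerate small cases remain. By the usual convention, a $3$-connected graph has at least $4$ vertices. With at least $4$ vertices, the edge cut $\{e',f'\}$ in $G$ splits $V$ into two nonempty parts, and one finds a vertex cut of size at most $2$ as follows. If some side has at least $2$ vertices, take an endpoint of each cut edge lying on that side; this gives at most two vertices whose removal separates the rest, provided the other side survives. If both sides are singletons, then $|V|=2$, which is excluded. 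This gives the contradiction, so $G$ has no parallel edges and is simple.

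I would double-check the degenerate situation where both cut edges share an endpoint on the larger side. In that case removing that single vertex already disconnects $G$, which contradicts even $2$-connectivity, so this case is harmless.
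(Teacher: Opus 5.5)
Your argument is correct and is essentially the paper's proof: two parallel edges bound a Jordan curve, their duals form a $2$-edge-cut of $G^*$, and self-duality turns this into a contradiction with the $3$-connectivity of $G$. You are in fact more explicit than the paper about converting the edge cut into a vertex cut via Whitney's inequality $\kappa\le\lambda$, which holds for multigraphs as well and already suffices. One small blemish is in your hand-made justification of that step: taking both cut-edge endpoints on a side with exactly two vertices can delete that whole side. The fix is to choose a side with at least three vertices, or, in a $2+2$ split, one endpoint on each side.
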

\begin{proof}
    Let $G$ be a 3-connected self-dual planar map. By the previous lemma $G$ is loop-free. If we have parallel edges $e_1,e_2$, then together they form a closed curve $\gamma$. Consider the dual graph $G^*$. The dual edges $e_1^*$ and $e_2^*$ form an edge-cut of size 2 between the vertices of $G^*$  that are inside $\gamma$ and those outside $\gamma$. Hence the dual graph $G^*$ is not 3-connected. By self-duality $G$ is also not 3-connected, a contradiction. Hence we do not have parallel edges and $G$ is simple.
\end{proof}

Now we recall the barycentric subdivision of a planar map. Let $M$ be a planar map. The \emph{barycentric subdivision} $B(M)$ is the map obtained as follows. For every vertex $v\in V(M)$, every edge $e\in E(M)$, and every
face $f\in F(M)$, we introduce a vertex $b_v,b_e,b_f$, respectively. That is, the vertex set is $V(M)\cup E(M)\cup F(M)$. In the embedding of $B(M)$, the vertices $b_e$ are placed on the corresponding edges of $M$, and the vertices $b_f$ are placed inside the corresponding faces of $M$. We join two vertices for every incidence between the corresponding objects of the map. That is, $b_v$ is connected to $b_e$ if $v$ is an endpoint of $e$. The drawing of the $b_vb_e$ edge follows the $e$ edge. Note that if $e$ is a loop edge, we have two $b_vb_e$ edges. Similarly, we connect $b_f$ to every edge and vertex incident to $f$, possibly multiple times. Hence, the faces of $B(M)$  correspond to the flags $(v,e,f)$, where $v$ is an endpoint of $e$ and $e$ lies on the boundary of $f$. See Figure \ref{fig:bary} for an example.

\begin{figure}[!ht]
    \centering
    \includegraphics[width=0.95\linewidth]{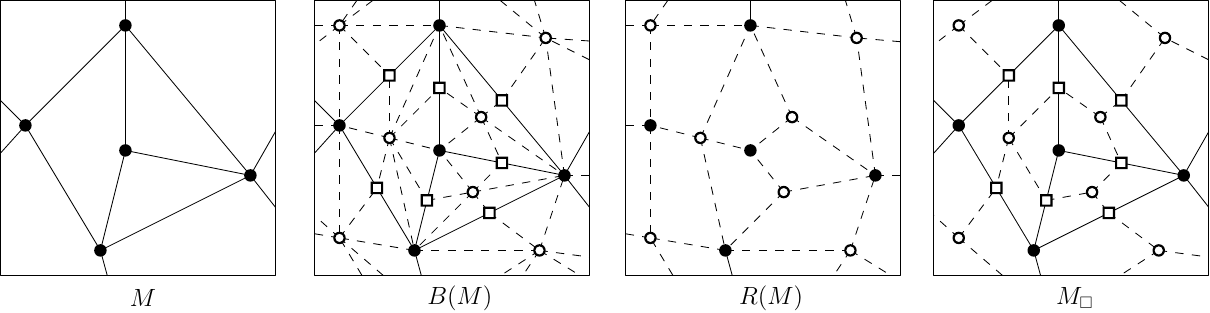}
    \caption{The barycentric subdivision, the radial map and the map of squares.}
    \label{fig:bary}
\end{figure}

We define two useful submaps of the barycentric subdivision. The \emph{map of squares of $M$} is the submap of the barycentric subdivision where we do not include the vertex-face edges, i.e. the edges of the form $b_vb_f$. The map of squares is denoted by $M_{\square}$.

The \emph{radial map of $M$} is the map given by the vertex-face edges, i.e. the induced subgraph on vertex set $V(M)\cup F(M)$. The radial map is denoted by $R(M)$. Each edge of the barycentric subdivision is either in the map of squares or in the radial map, see Figure \ref{fig:bary}.

\begin{lemma}\label{lem:barycentric3conn}
Let $M$ be a loop-free $2$-connected planar map. Then the barycentric subdivision $B(M)$ is a simple $3$-connected plane triangulation.
\end{lemma}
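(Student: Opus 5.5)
We prove the two claims separately: first that $B(M)$ is a simple triangulation, then that it is $3$-connected.

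\emph{Triangulation.} The faces of $B(M)$ are the flags $(v,e,f)$. Each such face is bounded by the three edges $b_vb_e$, $b_eb_f$, $b_fb_v$, so it is a triangle. These three vertices are distinct, since they are of different types.

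\emph{Simplicity.} Every edge of $B(M)$ joins vertices of different types, so there are no loops. Parallel edges could only arise from repeated incidences. There are three kinds to check:
\begin{itemize}
\item a pair $b_vb_e$ is doubled only if $e$ is a loop, which is excluded;
\item a pair $b_eb_f$ is doubled only if $e$ appears twice on the boundary of $f$, i.e.\ $e$ is a bridge, impossible in a $2$-connected map;
\item a pair $b_vb_f$ is doubled only if $v$ appears twice on the facial walk of $f$, which would make $v$ a cut vertex.
\end{itemize}
For the last two items we use the standard fact that in a $2$-connected plane map (with at least two edges) every face is bounded by a cycle. The degenerate tiny cases, a single edge or a single vertex, are either excluded by loop-freeness and $2$-connectivity or checked by hand.

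\emph{$3$-connectivity.} We use the standard fact that a simple plane triangulation on at least $4$ vertices is $3$-connected if and only if it has no separating triangle, i.e.\ no $3$-cycle that is not a face boundary. So it suffices to rule out separating triangles.

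Let $T$ be a $3$-cycle of $B(M)$. Since $B(M)$ is $3$-partite with classes $V$, $E$ and $F$, the cycle $T$ must use one vertex of each type: $b_v,b_e,b_f$ with $v\in e$, $e\subset\partial f$ and $v\in\partial f$. We must show that $T$ is the boundary of the flag face $(v,e,f)$ or of the other flag on the same side. The point is that, because faces are bounded by cycles, the incidences $v$--$f$ and $e$--$f$ are each realized by a unique edge of $B(M)$. Hence the triangle $b_vb_eb_f$ is drawn inside the closed face $f$ along the edge $e$. Since $v$ is an endpoint of $e$ on $\partial f$, this triangle is exactly the face of $B(M)$ corresponding to the flag $(v,e,f)$. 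Therefore every triangle is facial.

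We expect this identification to be the main obstacle. It relies on simplicity (so that each incidence gives a unique edge) and on the facial-cycle property. One must also handle the case where $e$ is one of several parallel edges: the flag $(v,e,f)$ is still unique, because the side of $e$ is determined by $f$.

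An alternative, if the separating-triangle criterion feels too indirect, is to remove two vertices and check connectivity by cases on their types. This can be done using that $M$ minus one vertex, and $M^*$ minus one vertex, remain connected.
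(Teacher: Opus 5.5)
Your triangulation and simplicity arguments are the same as the paper's. Your justification of the $b_vb_f$ case is actually more accurate than the paper's: a repeated vertex on a facial walk would be a cut vertex, since faces of a $2$-connected map are bounded by cycles. The paper instead appeals to the absence of cut edges, which only covers the $b_eb_f$ case.

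The divergence is in the $3$-connectivity step, and there the criterion you quote is misstated. It is not true that a simple plane triangulation on at least $4$ vertices is $3$-connected if and only if it has no separating triangle. For example, take $K_4$, insert an extra vertex into one face and join it to that face's three corners. The result is a $3$-connected simple triangulation with a separating triangle. The correct standard facts are these:
\begin{itemize}
\item \emph{every} simple plane triangulation on at least $4$ vertices is $3$-connected;
\item such a triangulation on at least $5$ vertices is \emph{$4$-connected} if and only if it has no separating triangle.
\end{itemize}

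The direction you actually use (no separating triangle implies $3$-connected) is true, so your proof still goes through. But it holds only because the conclusion is true unconditionally. That unconditional fact is exactly what the paper invokes, so its proof ends as soon as simplicity and the triangulation property are established.

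Your extra work does buy a stronger result. You observe that every $3$-cycle of $B(M)$ has one vertex of each type and, by simplicity, uses the unique edges between them, so it bounds the flag face $(v,e,f)$. This shows that $B(M)$ has no separating triangles at all, so $B(M)$ is in fact $4$-connected. That is more than the lemma needs. The alternative case analysis on removed vertex pairs that you sketch at the end is likewise unnecessary.
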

\begin{proof}
    By construction $B(M)$ is clearly loop-free. Suppose $v\in V(M),e\in E(M)$ and $f\in F(M)$. As $M$ is loop-free, there is at most one edge in $B(M)$ between $b_v$ and $b_e$. As $M$ is 2-connected, there are no cut edges. Hence there is at most one edge between $b_v$ and $b_f$ and at most one edge between $b_e$ and $b_f$. Therefore, $B(M)$ is a simple graph. $B(M)$ is a triangulation by construction, and every simple triangulation is 3-connected.
       
\end{proof}

Every self-duality $\tau$ of the map $M$ naturally induces a graph automorphism $\widehat{\tau}$ of $\gr(B(M))$: it sends the vertex $b_x$ corresponding to an object $x\in V(M)\cup E(M)\cup F(M)$ to $b_{\tau(x)}$. We will shortly see that this graph automorphism extends to a map automorphism $\overline{\tau}$. The second condition of being a strongly involutive self-duality is that $v\notin \tau(v)$ for every $v\in V$. Now we show that this condition corresponds to $\overline{\tau}$ being fixed-point-free. Variants of this idea already appeared in \cite{kupitz2010ball,bracho2020strongly,montejano2022self}, but again, we need a slightly more general version.

\begin{lemma}\label{lem:sisdotherdef}
    Suppose $G$ is a 2-connected self-dual planar map with involutive self-duality $\tau$, i.e., $\tau^2=\id$. Then the graph automorphism $\widehat{\tau}$ of $\gr(B(G))$ extends to a map automorphism $\overline{\tau}$ of $B(G)$. Furthermore, $\overline{\tau}$ is fixed-point-free if and only if $\tau$ is strongly involutive.
\end{lemma}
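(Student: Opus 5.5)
We prove the two claims separately: first that $\widehat\tau$ extends to a map automorphism, then the fixed-point characterization.

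\textbf{Extension.} By Lemma \ref{lem:selfdualloopfree}, $G$ is loop-free. Since it is also 2-connected, Lemma \ref{lem:barycentric3conn} shows that $B(G)$ is a simple 3-connected plane triangulation. Whitney's theorem \cite{whitney19332} says a simple 3-connected planar graph has a unique embedding up to homeomorphism of the sphere. So every graph automorphism of $\gr(B(G))$ is realized by a homeomorphism of $\Sb^2$ mapping the embedding to itself. Applied to $\widehat\tau$, this gives the map automorphism $\overline\tau$.

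One point needs care: $\widehat\tau$ must really be a graph automorphism. A self-duality swaps $V$ and $F$ and preserves incidences. Since $B(G)$ is simple, the edges of $B(G)$ are determined by incidences, so incidence preservation suffices. We may also choose $\overline\tau$ to be an involution, since $\widehat\tau^2=\id$ and the realizing homeomorphism is determined up to isotopy. We only need that $\overline\tau$ permutes vertices, edges and faces of $B(G)$ as $\widehat\tau$ prescribes.

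\textbf{Fixed points.} A homeomorphism realizing a map automorphism can be chosen so that it is fixed-point-free exactly when it fixes no cell of $B(G)$. Here a cell means a vertex, an edge, or a face, each mapped to itself possibly with a flip. Indeed, a homeomorphism of a closed cell onto itself has a fixed point by Brouwer. Conversely, if no cell is fixed we can choose the homeomorphism cellwise without fixed points. So the plan is to translate ``$\overline\tau$ fixes some cell of $B(G)$'' into ``some $u\in V$ has $u\in\tau(u)$.''

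\emph{Vertices of $B(G)$.} These are $b_x$. Since $\tau$ maps $V$ to $F$ and $F$ to $V$, no $b_v$ or $b_f$ is fixed; only some $b_e$ could be. The condition $\tau(e)=e$ means $e$ is its own dual edge. If $e=uw$, then $e^*$ crosses $e$, and $\tau(e)=e$ forces $\tau$ to map the faces on the two sides of $e$ to $u$ and $w$. By the involution, one of two things happens. Either $\tau(u)$ is a face containing $u$, or $\tau$ swaps $u$ with a face containing $w$ while $\tau(w)$ is a face containing $u$. In either case some vertex lies in its image, because both faces adjacent to $e$ contain both $u$ and $w$.

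\emph{Edges of $B(G)$.} A $b_vb_f$ edge is fixed (possibly reversed) iff $\tau(v)=f$ with $v\in f$. This is exactly a violation of condition 2). Edges $b_vb_e$ or $b_fb_e$ are fixed only if $b_e$ is fixed, which reduces to the previous case.

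\emph{Faces (flags).} A flag $(v,e,f)$ is sent to $(\tau(f),\tau(e),\tau(v))$. A fixed flag requires $\tau(v)=f$ with $v\in f$, again a violation.

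Conversely, if $u\in\tau(u)$, then the edge $b_u b_{\tau(u)}$ is mapped to itself with its ends swapped, so $\overline\tau$ has a fixed point on it. This proves the equivalence.

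\textbf{Main obstacle.} The step most likely to need care is the extension and its compatibility with fixed points. We must make sure the Whitney homeomorphism can be taken as an involution without spurious fixed points. The cleanest route is to build $\overline\tau$ directly, cell by cell: map each flag triangle affinely onto its image triangle. This is an honest homeomorphism because the flags form a triangulation and $\widehat\tau$ preserves adjacency. It is involutive, and its fixed points occur exactly in cells fixed setwise.
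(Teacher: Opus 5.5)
Your proposal is correct and follows essentially the same route as the paper. Whitney's theorem applied to the simple $3$-connected triangulation $B(G)$ gives the extension; a fixed point of $\overline\tau$ forces the cell containing it to be fixed setwise; and the case analysis (a fixed $b_e$, a reversed edge $b_vb_f$, or a fixed flag) yields a vertex with $v\in\tau(v)$, while conversely $v\in\tau(v)$ makes $\overline\tau$ reverse the edge $b_vb_{\tau(v)}$. Your extra remarks (Brouwer on closed cells, and the cellwise affine involutive realization) only make explicit what the paper leaves implicit, namely that fixed-point-freeness does not depend on which realizing homeomorphism is chosen.
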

\begin{proof}
    By Lemma \ref{lem:selfdualloopfree} $G$ is loop-free, and hence, by Lemma \ref{lem:barycentric3conn}, $B(G)$ is a simple  3-connected triangulation. By Whitney's theorem \cite{whitney19332}, any two embeddings of a simple $3$-connected graph in the plane are topologically isomorphic. Hence $\widehat{\tau}$ extends to a map automorphism $\overline{\tau}$.
    
    Suppose $v\in \tau(v)$. Then $\overline{\tau}$ maps the edge $b_vb_{\tau(v)}$ to itself, and hence there is a fixed point. 

    In the other direction, suppose $x$ is a fixed point of $\overline{\tau}$. The fixed point is either a vertex of $B(G)$, or it lies in an edge or face of $B(G)$.
    
    If $x\in V(B(G))$, then it must be a vertex $b_{uv}$ corresponding to an edge $uv\in E(G)$, as the face-vertices and vertex-vertices are swapped. But if $b_{uv}$ is a fixed point, then $v$ is mapped to a face incident to $b_{uv}$, and hence $v\in \tau(v)$.

    If $x$ lies in an edge $b_vb_e$, then $\{b_v,b_e\}$ is set-wise fixed by $\overline{\tau}$. If $x$ lies in an edge $b_eb_f$, then $\{b_e,b_f\}$ is set-wise fixed by $\overline{\tau}$. In both cases $b_e$ must be a fixed point and we are done by the previous argument. 

    If $x$ lies in an edge $b_vb_f$, then $\{b_v,b_f\}$ is set-wise fixed by $\overline{\tau}$, and hence  $\overline{\tau}$ must swap them. This implies that $v\in f=\tau(v).$

    If $x$ lies in a face $b_vb_eb_f$, then $\{b_v,b_e,b_f\}$ must be set-wise fixed by $\overline{\tau}$. This implies that $b_e=\overline{\tau}(b_e)$, which implies that $v\in \tau(v)$, as we have seen. 

    Therefore, if there is a fixed point, we always find a vertex $v$ such that $v\in\tau(v)$ and the self-duality is not strongly involutive.

\end{proof}

In \cite{bracho2020strongly} Bracho et al. studied simple 3-connected SISD graphs. Although a planar map might have more than one self-duality (consider $K_4$), they showed that a simple 3-connected planar map cannot have two different strongly involutive self-dualities.

\begin{lemma}[Corollary 10 in \cite{bracho2020strongly}]\label{lem:dualityunique}
    If $G$ is a simple 3-connected strongly involutive self-dual map, then there is only one self-duality which is strongly involutive.
\end{lemma}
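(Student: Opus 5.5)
We need to prove Lemma \ref{lem:dualityunique}: a simple 3-connected SISD map has only one strongly involutive self-duality.

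\medskip

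The plan is to suppose that $\tau$ and $\sigma$ are two strongly involutive self-dualities of $G$ and show $\tau=\sigma$. The natural object to study is $\phi=\sigma\circ\tau$. Since both $\tau$ and $\sigma$ swap vertices with faces, $\phi$ sends vertices to vertices, edges to edges and faces to faces, and it preserves incidences. So $\phi$ is a map automorphism of $G$. By Lemma \ref{lem:sisdotherdef} both $\tau$ and $\sigma$ induce fixed-point-free involutive map automorphisms $\overline\tau,\overline\sigma$ of the barycentric subdivision $B(G)$, which is a simple 3-connected triangulation by Lemma \ref{lem:barycentric3conn}. Then $\overline\phi=\overline\sigma\,\overline\tau$ is an automorphism of $B(G)$, and it is the identity if and only if $\tau=\sigma$. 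Factoring $\Sb^2$ by the free involution $\overline\tau$ gives a triangulation of $\Pb^2$. It also means $\overline\tau$ is conjugate to the antipodal map, so it reverses orientation; the same holds for $\overline\sigma$. Hence $\overline\phi$ is orientation-preserving.

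The key step is to find a vertex $v$ with $\phi(v)=v$ and then use rigidity. First I would try to show $\phi$ fixes some flag, and not merely a vertex. Pick a vertex $v$ and set $f=\tau(v)$. Since $v\notin f$, the vertex $v$ and the face $f$ are not incident. The face $\sigma(v)$ is also not incident to $v$. The idea is to use the strong condition combinatorially: in the 3-connected simple case, the faces not incident to $v$ have to be controlled by the structure of the radial quadrangulation. Once $\phi$ fixes a flag $(v,e,f)$ and is orientation-preserving, it fixes all adjacent flags, and by connectivity of the flag graph it is the identity.

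If that flag-level argument stalls, I would argue topologically instead. $\overline\phi$ is an orientation-preserving homeomorphism of $\Sb^2$ of finite order, so it is conjugate to a rotation. Because $\overline\phi$ is a product of two commuting-with-$\overline\phi^{-1}$ reflections-type free involutions, we have $\overline\tau\,\overline\phi\,\overline\tau=\overline\phi^{-1}$. A rotation conjugated by the antipodal map is itself, so $\overline\phi=\overline\phi^{-1}$, which gives $\overline\phi^2=\id$. Then $\overline\phi$ is either the identity or a half-turn with two antipodal fixed points $p$ and $\overline\tau(p)$. In the half-turn case, $\overline\sigma=\overline\phi\,\overline\tau$ would be the composition of a half-turn with the antipodal map, which is a reflection in a plane. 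A reflection has fixed points, contradicting Lemma \ref{lem:sisdotherdef}. Therefore $\overline\phi=\id$ and $\sigma=\tau$.

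The main obstacle is justifying the simultaneous linearization: one must know that $\overline\tau$ and $\overline\phi$ can be conjugated together to isometries. This holds because the group generated by $\overline\tau$ and $\overline\sigma$ acts on the finite 3-connected triangulation $B(G)$, so it is a finite group of homeomorphisms of $\Sb^2$. A finite group acting this way is conjugate to a subgroup of $O(3)$ by the classical result (Kerékjártó–Eilenberg), or can be realized directly via a symmetric Koebe–Andreev–Thurston circle packing of $B(G)$. I would use the circle packing route, since $B(G)$ is a simple triangulation and its packing is unique up to Möbius maps, so automorphisms become isometries after normalization.
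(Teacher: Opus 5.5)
Your topological route is correct and is essentially the argument the paper gives for the more general Theorem~\ref{thm:onlyonestrong}; the paper only cites this lemma, and its proof realizes the automorphisms of the simple 3-connected triangulation $B(G)$ simultaneously as isometries of $\Sb^2$ (via Lemma~\ref{lem:straight}, where you use Kerékjártó--Eilenberg or a symmetric circle packing) and then uses that a fixed-point-free involutive isometry of $\Sb^2$ must be the antipodal map. Once $\overline\tau$ and $\overline\sigma$ are simultaneously isometries, that fact already gives $\overline\tau=\overline\sigma=(x\mapsto -x)$, so your detour through $\overline\phi=\overline\sigma\,\overline\tau$ and the half-turn/reflection case analysis is redundant, and the flag-level paragraph, which is a sketch rather than an argument, can be dropped.
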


Here we extend this to the 2-connected case by proving Theorem \ref{thm:onlyonestrong}. We show that if $G$ is a 2-connected strongly involutive self-dual map, then there is only one self-duality which is strongly involutive.     

 We follow the same ideas as in \cite{bracho2020strongly}, slightly extending them. The main difference is that they relied on the map of squares $G_\square$, and used that it is 3-connected if $G$ is 3-connected. This is not necessarily true under the 2-connected assumption; hence,  we consider the full barycentric subdivision which is 3-connected by Lemma \ref{lem:barycentric3conn}. 
 
 Mani showed that every simple 3-connected planar graph has a representation as the skeleton of a polytope such that every automorphism of the graph extends to an isometry of the polytope \cite{Mani1971}. This also implies that every simple 3-connected planar graph has an embedding on $\Sb^2$ such that every automorphism extends to an isometry of $\Sb^2$. This was extended to the non-3-connected case by Servatius and Servatius in the following way.

\begin{lemma}[Lemma 1 in \cite{ServatiusServatius1995}]\label{lem:straight}
If $Aut(M)$ is the group of map automorphisms
of a connected map $M$, then the map can be redrawn so that $Aut(M)$ acts as a group of isometries on $\Sb^2$.
\end{lemma}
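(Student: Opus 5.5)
The plan is to reduce to Mani's theorem for simple $3$-connected planar graphs by replacing $M$ with an iterated barycentric subdivision. This subdivision is rigid enough that Whitney's theorem applies, and it is fine enough that $M$ can be read back from it.

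\textbf{Step 1: pass to a rigid refinement.} The degenerate connected maps (a single vertex with no edges, or very small maps with one edge) are handled directly, since their automorphism groups are visibly realisable by isometries. For the general case I first show that $B(M)$ is loop-free and $2$-connected for every connected map $M$:
\begin{itemize}
\item every edge of $B(M)$ joins two vertices of different types (vertex, edge or face), so there are no loops;
\item every face of $B(M)$ is a triangle $b_vb_eb_f$ on three vertices of distinct types, so no vertex repeats on a facial walk;
\item in a plane map with no repeated vertex on any facial walk there is no cut vertex, which gives $2$-connectivity.
\end{itemize}
Lemma~\ref{lem:barycentric3conn} then shows that $T:=B(B(M))$ is a simple $3$-connected triangulation.

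\textbf{Step 2: embed $Aut(M)$ into $Aut(\gr(T))$.} A map automorphism of $M$ is a homeomorphism of $\Sb^2$ preserving $M$, up to isotopy. It permutes the vertices, edges and faces of $M$ while preserving incidences, so it permutes the flags. It therefore induces a graph automorphism of $\gr(B(M))$ and hence of $\gr(T)$, and this automorphism preserves the labelling of the vertices of $T$ by their type.

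I must check that this gives an injective homomorphism $Aut(M)\to Aut(\gr(T))$. The key fact is that a map automorphism is determined by its action on flags: the flags are the faces of $B(M)$, and a homeomorphism fixing every flag triangle setwise, with its vertex types, is isotopic to the identity. Conversely, any label-preserving automorphism of $\gr(T)$ extends to a map automorphism of $T$ by Whitney's theorem, and hence of $M$. So $Aut(M)$ is identified with the type-preserving subgroup $\Gamma\le Aut(\gr(T))$.

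\textbf{Step 3: apply Mani and project.} By Mani's theorem \cite{Mani1971}, $\gr(T)$ is the skeleton of a convex polytope $P$ on which every graph automorphism, in particular every element of $\Gamma$, acts as an isometry. I translate so that the centroid of the vertices of $P$ is the origin. This point is fixed by all these isometries and lies in the interior of $P$. Radial projection from the origin then gives an embedding of $T$ on $\Sb^2$ on which $\Gamma$ acts by orthogonal maps, hence by isometries of $\Sb^2$.

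Inside this embedding, each edge of $M$ is drawn as the path of $T$ running through its subdivision vertices, and each vertex of $M$ sits at its own vertex of $T$. This is a redrawing of $M$. Because $\Gamma$ preserves types, every isometry in $\Gamma$ maps this drawing of $M$ to itself, and by Step 2 it realises the prescribed map automorphism.

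\textbf{Main obstacle.} I expect the hardest step to be the faithfulness and correctness in Step 2, together with the matching check at the end of Step 3. The isometry produced for an abstract automorphism must induce, as a homeomorphism, exactly the original map automorphism of $M$. In particular this must work for orientation-reversing automorphisms, where a reflection has to appear. My approach is to argue entirely through flags: two homeomorphisms of $\Sb^2$ that induce the same permutation of the faces of $B(M)$, preserving vertex types, are isotopic relative to $M$. This follows by an Alexander-trick argument on each triangular face.
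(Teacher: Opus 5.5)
The paper gives no proof of this lemma. It is quoted from Servatius and Servatius, and the only hint is the remark that it extends Mani's theorem from simple $3$-connected graphs to arbitrary connected maps. Your argument is a correct, self-contained derivation along exactly that line:
\begin{enumerate}
\item Every face of $B(M)$ is a triangle with three distinct vertices and three distinct edges, and $B(M)$ is loop-free. So Lemma~\ref{lem:barycentric3conn} makes $T=B(B(M))$ a simple $3$-connected triangulation.
\item $Aut(M)$ embeds into $Aut(\gr(T))$ as a type-preserving subgroup, since automorphisms are determined by their action on flags, which are the type-$F(B(M))$ vertices of $T$. Mani's polytope realizes this subgroup by symmetries, and these are uniquely determined by the vertex permutation, so you get a homomorphism.
\item Centering at the vertex centroid, which lies in the interior, and projecting radially turns these symmetries into orthogonal maps of $\Sb^2$. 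They preserve the embedded copy of $M$ and induce the prescribed flag permutations, reflections included.
\end{enumerate}

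A few minor remarks.
\begin{itemize}
\item Only the edgeless map is genuinely degenerate. A single edge or a single loop already gives a loop-free $B(M)$ with triangular faces, so your general argument covers those cases.
\item The converse half of Step~2 is not needed. You do not have to show that every type-preserving automorphism of $\gr(T)$ comes from $Aut(M)$; an injective, type-preserving homomorphism $Aut(M)\to Aut(\gr(T))$ suffices.
\item The isotopy statement should read ``isotopic through homeomorphisms preserving each cell of $B(M)$''. Read as pointwise ``relative to $M$'' it is false, because two such homeomorphisms may parametrize the edges of $M$ differently.
\end{itemize}

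Finally, in the paper the lemma is only applied to $M=B(G)$ with $G$ loop-free and $2$-connected. That map is already a simple $3$-connected triangulation by Lemma~\ref{lem:barycentric3conn}, so there Mani's theorem plus your Step~3 suffices directly. Your second subdivision is what buys the general statement for maps with loops, bridges and cut vertices.
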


The idea is to draw $B(G)$ using this lemma and argue that self-duality must correspond to the antipodal isometry. 

\begin{proof}[Proof of Theorem \ref{thm:onlyonestrong}]

Let $G$ be a strongly involutive self-dual map. Let $\tau$ be a strongly involutive self-duality of $G$. That is, $\tau^2=\id$, and $v\notin\tau(v)$ for every $v$. Let $\widehat{\tau}$ be the graph automorphism of $\gr(B(G))$ given by $\tau$. As $\gr(B(G))$ is simple 3-connected (Lemma \ref{lem:barycentric3conn}), any graph automorphism extends to a map automorphism of $B(G)$. Hence $\widehat{\tau}$ extends to a map automorphism $\overline{\tau}$. By Lemma \ref{lem:straight} there is an embedding of $B(G)$ such that $Aut(B(G))$ acts as a group of isometries on $\Sb^2$.

Therefore, there is an isometry $\alpha: \Sb^2\rightarrow \Sb^2$ corresponding to $\overline{\tau}$ in such an embedding of $B(G)$. We show that this isometry is  the antipodal map.  We first observe that since $\tau^2=\id$, $\alpha^2$ is an isometry of the sphere that leaves $B(G)$, a simple 3-connected map, fixed. Therefore $\alpha^2=\id$. Secondly, by Lemma \ref{lem:sisdotherdef}, $\alpha$ is fixed-point-free. The only involutive isometry of the sphere that is fixed-point-free is the antipodal map, hence $\alpha(x)=-x$.

Suppose now that $\tau_1$ and $\tau_2$ are two strongly involutive self-dualities of $G$. Using Lemma \ref{lem:straight}, take an embedding of $B(G)$ such that both $\widehat{\tau}_1$ and $\widehat{\tau}_2$ extend to isometries of the sphere. By the previous argument they both extend to the antipodal map. Therefore $\widehat{\tau}_1=\widehat{\tau}_2$, implying that $\tau_1$ and $\tau_2$  are the same self-duality.

\end{proof}

\subsection{Reduced radial map and the SISD--NBQP correspondence}

As we have seen (Lemmas \ref{lem:diagissisd} and \ref{lem:2connectivity}), we can create a 2-connected SISD map from each NBQP map. Now we argue that this also works the other way. Not too surprisingly, we will use the same ideas in reverse. Just as we have drawn two copies of the diagonal graph in an antipodally symmetric way, we will argue that for any 2-connected SISD map we can embed two copies of it on the sphere such that they are antipodally symmetric and dual to each other. 

Let $G$ be a self-dual map and suppose that $G$ and $G^*$ are embedded in the sphere such that  $G^*$ is a dual embedding of $G$ and the following also holds for any $x \in \mathbb{S}^2$:

\begin{itemize}
    \item if $x \in V(G)$ then $-x \in V (G^*)$ and 
    \item if $x \in e \in E(G)$ then $-x \in e^{*} \in E(G^*)$, that is, $e^*$ is antipodally embedded in $\mathbb{S}^2$ with respect to the embedding of $e$.
\end{itemize}

Then we say that the embedding is \emph{antipodally self-dual}. Montejano, Alfonsín and Rasskin showed that strongly involutive graphs have antipodally self-dual embeddings \cite{montejano2022self}. Note that they work with a slightly different notion of strongly involutive self-duality. Instead of the condition $\tau^2=\id$ they require a weaker condition, namely that $v\in  \tau(w)$ if and only if $w\in\tau(v)$ for every $v,w\in V$. These two conditions are equivalent for simple 3-connected  graphs. Their proof starts with the claim that they are also equivalent for general, not necessarily 3-connected or simple graphs. We could not verify this claim\footnote{Indeed, consider the graph on vertex set $\{a,b,c\}$ and edges $ab,ac,bc,bc$, that is, a triangle with one of the edges doubled. This graph does have a self-duality satisfying $v\in \tau(w)$ if and only if $w\in\tau(v)$ but not satisfying $\tau^2=\id$. So the two conditions are not equivalent in general. On the other hand, the self-duality $\tau$ in this example does have a vertex $v$ such that $v\in\tau(v)$. Hence it could be that their two conditions together are equivalent to our two conditions together.}. We decided to give a proof of the theorem under our definition, mostly following their ideas.

\begin{theorem}[2-connected version of Corollary 2 in \cite{montejano2022self}]\label{thm:strisantipodal}
If $G$ is a 2-connected planar map with a strongly involutive self-duality $\tau$, then there is an antipodally self-dual embedding of $G$ and $G^*$ with respect to $\tau$. 
\end{theorem}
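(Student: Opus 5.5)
The plan is to reuse the argument from the proof of Theorem \ref{thm:onlyonestrong}: realize $\tau$ as the antipodal map on a suitable embedding of the barycentric subdivision, and then read off embeddings of $G$ and $G^*$ from that drawing.

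\emph{Setting up the symmetric drawing.} By Lemma \ref{lem:selfdualloopfree}, $G$ is loop-free. Hence, by Lemma \ref{lem:barycentric3conn}, $B(G)$ is a simple 3-connected triangulation. By Lemma \ref{lem:sisdotherdef}, the graph automorphism $\widehat\tau$ extends to a map automorphism $\overline\tau$ of $B(G)$, and $\overline\tau$ is fixed-point-free because $\tau$ is strongly involutive. Apply Lemma \ref{lem:straight} to redraw $B(G)$ so that $Aut(B(G))$ acts by isometries of $\Sb^2$. Let $\alpha$ be the isometry realizing $\overline\tau$. Since $\tau^2=\id$, the isometry $\alpha^2$ fixes every vertex of a simple 3-connected map, and therefore $\alpha^2=\id$. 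As $\alpha$ is also fixed-point-free, it must be the antipodal map $x\mapsto -x$.

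\emph{Reading off $G$ and $G^*$.} Inside this drawing of $B(G)$ we embed $G$ in the obvious way. Each vertex $v$ sits at $b_v$, and each edge $e=uv$ is the path $b_u b_e b_v$; these paths are disjoint except at endpoints, since they are edges of $B(G)$. We embed $G^*$ in the standard dual position. The dual vertex of a face $f$ sits at $b_f$, and the dual edge $e^*$ between the faces $f,g$ on the two sides of $e$ is the path $b_f b_e b_g$. This is a dual embedding of $G$: each dual vertex lies inside its face, and each $e^*$ crosses only $e$, exactly once, at $b_e$.

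\emph{Checking antipodality.} Since $\overline\tau$ sends $b_x$ to $b_{\tau(x)}$ and acts as $x\mapsto -x$, we get $-b_v=b_{\tau(v)}$, and $\tau(v)$ is a face, i.e.\ a vertex of $G^*$. Similarly, $\alpha$ maps the path $b_u b_e b_v$ onto $b_{\tau(u)} b_{\tau(e)} b_{\tau(v)}$. It remains to see that this image is precisely the drawn dual edge $\tau(e)^*$, i.e.\ that $\tau(u),\tau(v)$ are the two faces on the sides of $\tau(e)$. This holds because $\tau$ is a self-duality, so it preserves incidences between vertices, edges and faces. Here the identification of $\tau(e)$ with a dual edge follows the paper's convention that the self-duality is the composition of $*$ with the isomorphism $G^*\to G$.

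\emph{Main obstacle.} The delicate point is the bookkeeping with non-simple $G$. When there are parallel edges, or when the same pair of faces is adjacent along several edges, the paths $b_f b_e b_g$ must still be distinguished edge by edge. This works because each such path passes through its own vertex $b_e$, and because simplicity of $B(G)$ guarantees that the segments $b_u b_e$ and $b_e b_f$ are uniquely determined. I would also check that the resulting pair is a genuine dual embedding, with each face of the drawn $G$ containing exactly its $b_f$. This follows from the fact that the faces of $B(G)$ around $b_f$ tile the face $f$.
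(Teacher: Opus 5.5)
Your proposal is correct and follows the same route as the paper. The paper applies Lemma \ref{lem:straight} to the simple 3-connected triangulation $B(G)$, identifies the isometry realizing $\overline\tau$ with the antipodal map, and then draws $G$ and $G^*$ along the edges of $B(G)$. Your version spells out the bookkeeping that the paper leaves implicit: the placement at $b_v$ and $b_f$, the edge paths through $b_e$, and the incidence check. Because $\tau^2=\id$, the dual edge $\tau(e)^*$ you obtain is exactly the edge of $G^*$ that the isomorphism $G^*\to G$ matches with $e$, so your reading of ``antipodally self-dual with respect to $\tau$'' agrees with the paper's definition.
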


\begin{proof}[Proof of Theorem \ref{thm:strisantipodal}]    
The proof of Theorem \ref{thm:strisantipodal} is essentially the same idea as the one we used in Theorem \ref{thm:onlyonestrong}. Let $\overline{\tau}$ be the map automorphism of $B(G)$ given by $\tau$. Use Lemma \ref{lem:straight} to embed $B(G)$ in a way that $Aut(B(G))$ acts as a group of isometries of the sphere. Then the isometry corresponding to $\overline{\tau}$ is the antipodal map. Hence we can embed $G$ and $G^*$ in an antipodally symmetric way by following the edges of $B(G)$.   
\end{proof}

As we have seen, if $Q$ is an NBQP map, then $\diag(Q)$ is a 2-connected SISD map.  Theorem \ref{thm:strisantipodal} allows us to define the inverse operation. 
The planar map $R(G)$ will serve as the canonical double cover of the NBQP we are looking for. In the following lemma we  show that $R(G)$ is indeed the canonical double cover of a graph, and we can collapse it to obtain an NBQP.

\begin{lemma}\label{lem:sisdtoquad}
    If $G=(V,E,F)$ is a 2-connected SISD map, then the planar map $R(G)$ can be embedded on $\Sb^2$ as an antipodally symmetric quadrangulation. Furthermore, if we collapse the drawing to the projective plane, then we obtain an NBQP embedding. The resulting NBQP map depends only on the map $G$.
\end{lemma}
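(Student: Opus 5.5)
We take an antipodally self-dual embedding of $G$ and $G^*$ with respect to $\tau$, given by Theorem~\ref{thm:strisantipodal}. By its proof, this embedding comes from an embedding of $B(G)$ on $\Sb^2$ on which $\overline{\tau}$ acts as the antipodal map $\alpha(x)=-x$. Since $R(G)$ is a submap of $B(G)$, this induces an embedding of $R(G)$.

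\textbf{Symmetry.} The edges of $R(G)$ are the edges $b_vb_f$ of $B(G)$. The map $\overline{\tau}$ sends $b_v$ to $b_{\tau(v)}$, a face-vertex, and $b_f$ to a vertex-vertex, so it preserves the set of vertex-face edges. Hence the embedded $R(G)$ is invariant under $\alpha$. Since $\alpha$ swaps $V\cup F$ with itself and is fixed-point-free, no vertex, edge or face of $R(G)$ is mapped to itself pointwise in a fixed way. Concretely, a face of $R(G)$ corresponds to an edge $e$ of $G$ and contains $b_e$, and $\overline{\tau}(b_e)=b_{\tau(e)}\neq b_e$ by Lemma~\ref{lem:sisdotherdef}.

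\textbf{Quadrangulation.} By Lemma~\ref{lem:selfdualloopfree} the map $G$ is loop-free. By Theorem~\ref{thm:radialprops}(1), $R(G)$ is then a simple quadrangulation. So the embedding is an antipodally symmetric simple quadrangulation of $\Sb^2$.

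\textbf{Collapsing.} Let $\pi:\Sb^2\to\Pb^2$ be the quotient map. Since $\alpha$ acts freely on the cells of $R(G)$, the image $\pi(R(G))$ is a cell complex in $\Pb^2$, and each face is the homeomorphic image of a disk face. Each face has size $4$, and its four boundary vertices stay distinct in the quotient. The reason is that two opposite vertices of a face would have to be identified via $\alpha$, and identifying $v$ with $\tau(v)=f$ inside a face incident to both would mean $v\in\tau(v)$, which is excluded. Simplicity of the quotient needs three checks.
\begin{itemize}
\item No loops: a loop would require an edge $b_vb_f$ with $f=\tau(v)$, i.e.\ $v\in\tau(v)$.
\item No parallel edges: two edges $b_vb_f$ and $b_{v'}b_{f'}$ are identified exactly when $\{b_{v'},b_{f'}\}=\{\overline{\tau}(b_f),\overline{\tau}(b_v)\}$. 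Parallel edges downstairs therefore lift to a $4$-cycle $b_v\,b_f\,b_{\tau(v)}\,b_{\tau(f)}$ that is antipodally symmetric. I expect to exclude this using simplicity of $R(G)$ and the symmetry. This is the step to check carefully, since in general it may only hold up to the stated conventions.
\item Non-bipartite: the quotient has $|V|+|F|$ vertices upstairs, halved, and $|E(R(G))|=2|E|$ halved. A cell decomposition of $\Pb^2$ is never bipartite once we lift it. Equivalently, a path in $R(G)$ from $b_v$ to $\alpha(b_v)=b_{\tau(v)}$ goes from a vertex-vertex to a face-vertex, so it has odd length, and it projects to a closed walk of odd length. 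An odd closed walk contains an odd cycle, so the quotient is non-bipartite.
\end{itemize}

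\textbf{Independence of choices.} By Theorem~\ref{thm:onlyonestrong}, $\tau$ is unique. Two isometric-action embeddings of $B(G)$ differ by a homeomorphism of the sphere commuting with the antipodal map. Indeed, both embeddings realize the same map $B(G)$, which is $3$-connected, so by Whitney they are related by a homeomorphism $h$. This $h$ conjugates one antipodal action to the other, and any homeomorphism conjugating $\alpha$ to $\alpha$ descends to $\Pb^2$. Hence the resulting NBQP map depends only on $G$.

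The main obstacle is verifying simplicity of the quotient, in particular excluding parallel edges. I would try to derive it from Lemma~\ref{lem:maximal}-style counting or from faces being quadrangles with distinct vertices.
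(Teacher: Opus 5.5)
Your route matches the paper's. You use the embedding of $B(G)$ in which $\overline{\tau}$ acts as $\alpha$ and keep its vertex--face edges; these are exactly the kite diagonals the paper draws. You then get a simple quadrangulation from Lemma~\ref{lem:selfdualloopfree} and Theorem~\ref{thm:radialprops}, and check the quotient.

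\textbf{The gap.} As you flag yourself, you never exclude parallel edges in $\overline R(G)$, and the reduction you sketch does not work.
\begin{itemize}
\item Your ``$4$-cycle'' $b_v\,b_f\,b_{\tau(v)}\,b_{\tau(f)}$ is not a cycle of $R(G)$. Both $b_f$ and $b_{\tau(v)}$ are face-vertices, and $R(G)$ has no edges inside a colour class.
\item Your condition $\{b_{v'},b_{f'}\}=\{\overline{\tau}(b_f),\overline{\tau}(b_v)\}$ describes the two lifts of a \emph{single} quotient edge, not two parallel quotient edges.
\item Edge counting in the style of Lemma~\ref{lem:maximal} cannot help. The quotient has $|V|$ vertices and $|E|=2|V|-2$ edges whether or not parallel edges occur.
\end{itemize}

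\textbf{The missing idea.} The antipodal map $\alpha$ swaps the two colour classes $V$ and $F$ of $R(G)$. Suppose two distinct quotient edges join $\{x,\alpha(x)\}$ and $\{y,\alpha(y)\}$, and lift both to edges incident to $x$. There are two cases:
\begin{itemize}
\item Both lifts join $x$ to $y$. Then $R(G)$ has parallel edges, contradicting its simplicity.
\item One lift joins $x$ to $y$ and the other joins $x$ to $\alpha(y)$. This is impossible, because $y$ and $\alpha(y)$ have different colours, so one of them has the colour of $x$.
\end{itemize}
This is precisely the paper's argument.

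\textbf{Two smaller slips.}
\begin{itemize}
\item In the check that each face keeps four distinct vertices, the pairs that could be identified are adjacent ones, which have different colours. Opposite vertices have the same colour and are never identified by $\alpha$. Your justification via $v\in\tau(v)$ is nevertheless the right one for the adjacent pairs.
\item Your remark that a cell decomposition of $\Pb^2$ is ``never bipartite'' is false as stated; bipartite quadrangulations of $\Pb^2$ exist. Keep only the odd-closed-walk argument, which is correct and matches the paper's.
\end{itemize}
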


\begin{proof}
 As $G$ is a 2-connected SISD map, by Theorem \ref{thm:onlyonestrong} there is a unique self-duality $\tau_G$ of $G$ that is strongly involutive.  By Theorem \ref{thm:strisantipodal} there is an embedding of $G$ and $G^*$ that is antipodally self-dual with respect to $\tau_G$. Then $G$ and $G^*$ together divide the sphere into \emph{kites}, i.e. quadrangles such that each has one vertex from $V$, one from $V^*$ and two corresponding to the intersections of dual edge pairs, i.e. an intersection of the form $e\cap e^*$, see Figure \ref{fig:sisdtonbqpa}. 
 
 We create a new drawing with vertex set $V\cup V^*$. In each kite we connect the vertex from $V$ to the vertex from $V^*$ by an edge, and we do it symmetrically in opposite kites. This way we get an antipodally symmetric embedding of $R(G)$, see Figure \ref{fig:sisdtonbqpb}. The faces of this new drawing correspond to dual edge pairs. As $G$ is 2-connected and self-dual, it is loop-free (Lemma \ref{lem:selfdualloopfree}). By Theorem \ref{thm:radialprops}, $R(G)$ is a simple quadrangulation. In particular, it is a 2-cell embedding.

\begin{figure}[!htbp]
  \centering

  \begin{subfigure}[c]{0.3\textwidth}
    \centering
    \includegraphics[width=0.75\linewidth]{figures/self_dual.pdf}
    \caption{Self-dual embedding on the plane, dividing the plane into kites.}
    \label{fig:sisdtonbqpa}
  \end{subfigure}
  \begin{subfigure}[c]{0.35\textwidth}
    \centering
    \includegraphics[width=0.75\linewidth]{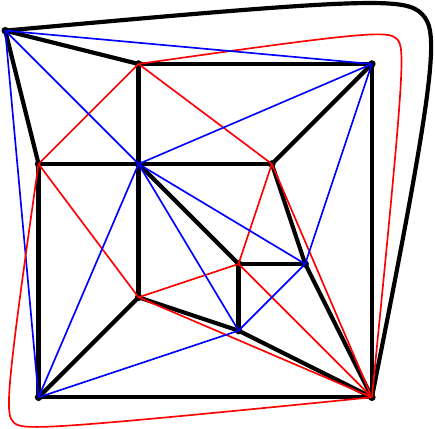}
    \caption{The drawing of $R(G)$.}
    \label{fig:sisdtonbqpb}
  \end{subfigure}
  \begin{subfigure}[c]{0.3\textwidth}
    \centering
    \includegraphics[width=0.75\linewidth]{figures/q6_nocap.pdf}
    \caption{The collapsed quadrangulation $\overline{R}(G)$ in $\Pb^2$.}
    \label{fig:sisdtonbqpc}
  \end{subfigure}

  \caption{Steps of creating an NBQP map from a SISD map.}
  \label{fig:sisdtonbqp}
\end{figure}

 Collapsing $R(G)$ to $\Pb^2$ results in a 2-cell embedding  $\overline R(G)$. Our goal is to show that this is an NBQP embedding. That is, we need to show that the collapsed drawing is indeed a quadrangulation, that it is simple, and also that it is not bipartite. 
 
 Consider a quadrangle $abcd$ in the drawing of $R(G)$, where $a,c\in V$ and $b,d\in V^*$. As we have started with an antipodally self-dual embedding of $G$ and $G^*$, $a$ and $c$ cannot be antipodally symmetric, and similarly $b$ and $d$ cannot be antipodally symmetric.   
  Also, as $v\notin \tau_G(v)$ for any $v\in V $, none of the $(a,b), (a,d),(b,c),(d,c)$ pairs can be antipodally symmetric. Hence, collapsing $abcd$ results in four distinct vertices, i.e. $\overline R(G)$ is a quadrangulation.  
  
  It is also not hard to see that $\overline R(G)$ is simple. A loop edge at $v$ would mean that $v\in \tau(v)$ which is forbidden. As $R(G)$ is simple, parallel edges in $\overline R(G)$ would mean that a vertex in $R(G)$ is connected to antipodally symmetric points of the sphere. This is not possible, as antipodally symmetric points cannot be both in $G$ or both in $G^*$.
 
 Clearly, opposite vertices have different colors in the 2-coloring of $R(G)$. Hence, if we collapse the quadrangulation to the projective plane, the result cannot be bipartite. Therefore, we have an NBQP embedding of $\overline R(G)$. Since $\tau_G$ is the unique strongly involutive self-duality of $G$, the map $\overline R(G)$ is determined by the map of $G$.
    
\end{proof}

For a 2-connected SISD map $G$, let $\tau_G$ denote its unique strongly involutive self-duality. Then $\tau_G$ extends to a fixed-point-free map automorphism $\overline{\tau_G}$ of $R(G)$.  We define\[\overline R(G):=R(G)/\overline{\tau_G}.\] Equivalently, $\overline R(G)$ is the NBQP map obtained by collapsing the antipodally symmetric drawing of $R(G)$ from the previous lemma. We call $\overline R(G)$ the \emph{reduced radial map} of $G$, and the lemma shows that this is well-defined\footnote{There is a similar notion in incidence geometry. Given a set of points and abstract lines, the Levi graph is the bipartite graph that has a vertex for each point and each line, and edges correspond to incidences. If the configuration is self-dual and fixed-point-free, then the quotient of the Levi graph by the self-duality is called the reduced Levi graph \cite{artzy1956}.}.

Now we are ready to prove Theorem \ref{thm:commute}. That is, we need to show the following. The two operations, $\diag()$ and $\overline R()$, are inverse bijections between the family of NBQP maps and the family of 2-connected SISD maps and satisfy the following properties.  

\begin{enumerate}
    \item[a)] If $G$ is a 2-connected SISD map, then $\diag(\overline R(G))$ is the same planar map as $G$.
    \item[b)] If $Q$ is an NBQP map, then $\overline R(\diag(Q))$ is the same NBQP map as $Q$.
    \item[c)] If $Q$ is an NBQP map, then $\widetilde Q=R(\diag(Q))$. 
\end{enumerate}

\begin{proof}
    We quickly recall the constructions. We create $\overline R(G)$ in three steps; see the first row of Figure \ref{fig:bigpic}:
    \begin{enumerate}
    \setlength\itemsep{0pt}
    \setlength\topsep{0pt}
    \setlength\parsep{0pt}
    \setlength\partopsep{0pt}
        \item[$i_1)$] Take an antipodally symmetric embedding of $G$ and $G^*$.
        \item[$i_2)$] Draw the appropriate diagonal in each kite to form $R(G)$.
        \item[$i_3)$] Collapse the embedding to $\Pb^2$.
    \end{enumerate}

    We construct $\diag(Q)$ in three steps, see the second row of Figure \ref{fig:bigpic}:
    \begin{enumerate}
    \setlength\itemsep{0pt}
    \setlength\topsep{0pt}
    \setlength\parsep{0pt}
    \setlength\partopsep{0pt}
        \item[$d_1)$] Lift $Q$ to $\Sb^2$ to obtain $\widetilde Q$.
        \item[$d_2)$] Take a 2-coloring of $\widetilde Q$.
        \item[$d_3)$] Draw the diagonal in each quadrangle corresponding to one of the color classes.
    \end{enumerate}

\begin{figure}[!ht]
    \centering
      \begin{subfigure}[b]{0.2\textwidth}
    \centering
    \includegraphics[width=1.\linewidth]{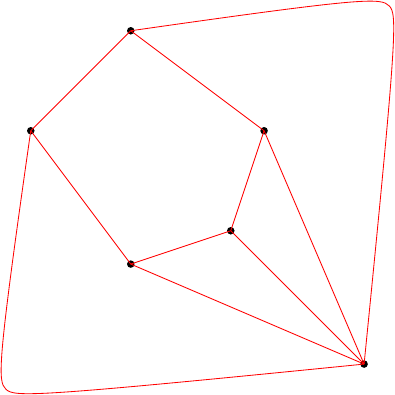}
    \caption{$G$}
    \label{fig:tba}
      \end{subfigure}
        \begin{subfigure}[b]{0.2\textwidth}
    \centering
    \includegraphics[width=1.\linewidth]{figures/self_dual.pdf}
    \caption{$G\cup G^*$}
    \label{fig:tbb}
      \end{subfigure}
   \begin{subfigure}[b]{0.2\textwidth}
    \centering
    \includegraphics[width=1.\linewidth]{figures/defIg.pdf}
    \caption{$R(G)$}
    \label{fig:tbc}
      \end{subfigure}
  \begin{subfigure}[b]{0.2\textwidth}
    \centering
    \includegraphics[width=1.\linewidth]{figures/q6_nocap.pdf}
    \caption{$\overline R(G)$}
    \label{fig:tbd}
      \end{subfigure}

      \begin{subfigure}[b]{0.2\textwidth}
    \centering
    \includegraphics[width=1.\linewidth]{figures/q6_nocap.pdf}
    \caption{$Q$}
    \label{fig:tbe}
      \end{subfigure}
        \begin{subfigure}[b]{0.2\textwidth}
    \centering
    \includegraphics[width=1.\linewidth]{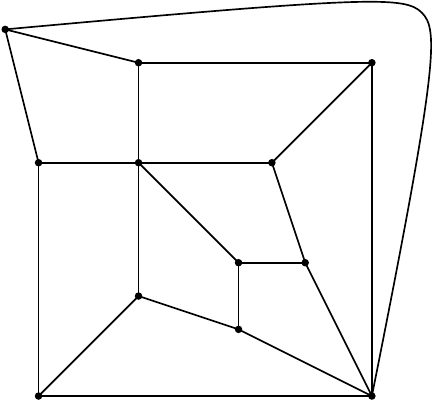}
    \caption{$\widetilde Q$}
    \label{fig:tbf}
      \end{subfigure}
   \begin{subfigure}[b]{0.2\textwidth}
    \centering
    \includegraphics[width=1.\linewidth]{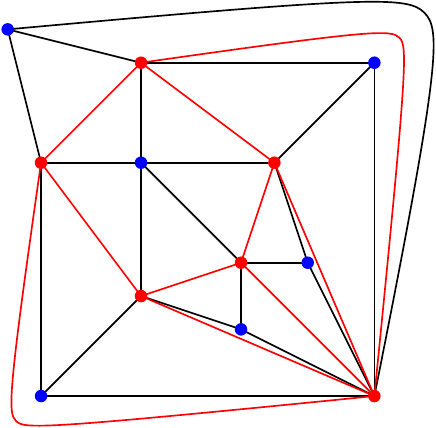}
    \caption{2-coloring}
    \label{fig:tbg}
      \end{subfigure}
  \begin{subfigure}[b]{0.2\textwidth}
    \centering
    \includegraphics[width=1.\linewidth]{figures/diagonly.pdf}
    \caption{$\diag(Q)$}
    \label{fig:tbh}
      \end{subfigure}

    \caption{From SISD maps to NBQP maps, and back.}
    \label{fig:bigpic}
\end{figure}

    First, we start with a $2$-connected SISD map $G$, and we take an antipodally self-dual embedding using Theorem \ref{thm:strisantipodal}. By definition, $\overline R(G)$ is obtained by quotienting the antipodally symmetric radial map $R(G)$ by the antipodal involution. 
    The two bipartition classes of $R(G)$ are the vertices and the faces of $G$. In every quadrangular face of $R(G)$, one diagonal is an edge of $G$ and the other is the corresponding dual edge of $G^*$. Thus drawing one diagonal class in the lift recovers $G$, and drawing the other recovers $G^*$. Since the SISD map identifies $G$ and $G^*$, we obtain $\diag(\overline R(G))=G$ as planar maps, hence part $a)$ holds.

Conversely, start with an NBQP map $Q$. Lifting $Q$ to the sphere gives $\widetilde Q$. The two diagonal classes of $\widetilde Q$ are dual planar maps; choosing one of them is precisely the construction of $\diag(Q)$. The radial map of $\diag(Q)$ is naturally $\widetilde Q$: vertices of $\diag(Q)$ correspond to one color class of $\widetilde Q$, faces of $\diag(Q)$ correspond to the other color class, and vertex-face incidences correspond exactly to edges of $\widetilde Q$. This shows part $c)$. Quotienting the radial map by the antipodal involution therefore recovers the original projective map $Q$.  By Theorem \ref{thm:onlyonestrong}  the antipodal map is the only strongly involutive self-duality. Hence $\overline R(\diag(Q))= Q$, and part $b)$ holds.

 Finally, to show that $\diag()$ and $\overline{R}()$ are bijections between the NBQP maps and 2-connected SISD maps and that they are inverses, it is enough to show that they map from one family to the other and that their composition is the identity. The mapping part is exactly  Lemma \ref{lem:diagissisd}, Corollary \ref{cor:diagis2connsisd}, and Lemma \ref{lem:sisdtoquad}. The composition part is $a)$ and $b)$ of this theorem.

\end{proof}

\section{Further properties}\label{sec:applications}
As mentioned before, SISD maps were mostly studied under the assumption that the graph is 3-connected. We start this section by discussing how 3-connectivity can be recognized on the NBQP side of the correspondence. Again, the results we obtain are analogous to the planar-radial setting. Consider, for example, the following theorem. Recall that $abcd$ is a \emph{separating 4-cycle} of $Q$ if it is a 4-cycle that does not bound a face of $Q$. 

\begin{theorem}[Theorem 2.8.2 in \cite{mohar2001graphs}]\label{thm:radial3con}
    Let $G$ be a simple 2-connected planar embedding on at least 4 vertices. Then the following are equivalent:
    \begin{enumerate}
        \item $G$ is 3-connected. 
        \item $R(G)$ has no separating 4-cycles.
        \item For every proper subset $S\subset V(R(G))$ that contains at least 5 vertices we have $|E(R(G)[S])|<2|S|- 4$.
    \end{enumerate}
\end{theorem}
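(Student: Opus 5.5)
The plan is to prove the cycle $1\Rightarrow 2\Rightarrow 3\Rightarrow 1$, working directly with the radial quadrangulation $R=R(G)$. The two bipartition classes of $R$ are $V(G)$ and $F(G)$, and every face of $R$ is a quadrangle $v f w g$ corresponding to an edge $vw$ of $G$ separating the faces $f,g$. Since $G$ is simple and 2-connected, Theorem~\ref{thm:radialprops} gives that $R$ is a simple quadrangulation. By Euler's formula $R$ has $|V(R)|=|V|+|F|$ vertices and $2|V(R)|-4$ edges. The same count applies to every 2-cell quadrangulation of the sphere.

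For $1\Rightarrow 2$, suppose $C=vfwg$ is a separating 4-cycle of $R$ with $v,w\in V(G)$ and $f,g\in F(G)$. In the embedding, $C$ is a closed curve meeting $G$ only at $v$ and $w$. It passes through the faces $f$ and $g$ (for instance, $v$ to $w$ inside $f$, then back inside $g$). Both sides of $C$ contain vertices of $R$, since $C$ is not facial.
\begin{itemize}
\item If some side contains a vertex of $G$, then $\{v,w\}$ separates $G$, unless all vertices of $G$ other than $v,w$ lie on one side.
\item In that remaining case, the other side contains only face-vertices of $R$. Such a face is bounded by edges of $G$ between $v$ and $w$ only, so it is a digon, which is impossible since $G$ is simple. (One can also use $|V|\ge 4$ here.)
\end{itemize}
So if $G$ is 3-connected, $R$ has no separating 4-cycle. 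Conversely, a 2-cut $\{v,w\}$ of $G$ yields two faces $f,g$, each incident to both $v$ and $w$, for which $vfwg$ separates. I will keep this converse, as it is the route for $3\Rightarrow 1$.

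For $2\Rightarrow 3$, take a proper subset $S$ with $|S|\ge 5$, and suppose $|E(R[S])|\ge 2|S|-4$. Since $R[S]$ is bipartite, all its cells have size at least 4. The planar analogue of Lemma~\ref{lem:2nminus2isquad} (Euler's formula on $\Sb^2$, giving $|E|\le 2|V|-4$ with equality iff quadrangulation) then shows that $R[S]$ with the inherited embedding is a quadrangulation of the sphere. Because $S\neq V(R)$, some vertex $x\notin S$ lies inside a face $D$ of $R[S]$. The boundary of $D$ is a 4-cycle of $R$ that is not a face of $R$, so it is a separating 4-cycle, contradicting 2. One detail must be handled: if $R[S]$ is disconnected or not 2-connected, I apply the bound blockwise as in Lemma~\ref{lem:2nminus2isquad}. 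The bound is then strict, since each extra block loses a $4$.

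For $3\Rightarrow 1$, assume $G$ is not 3-connected, so it has a 2-cut $\{v,w\}$. I will produce a separating 4-cycle $C=vfwg$ as above, and take $S$ to be $C$ together with everything strictly on one side of it. The vertices and edges of $R$ on that side, together with $C$, form a quadrangulation of the closed disk bounded by $C$. Capping it with the face bounded by $C$ gives a quadrangulation of the sphere on vertex set $S$. Hence $|E(R[S])|=2|S|-4$, after checking that no chord of $C$ lies outside the disk; such chords would only increase the count. I will choose the side so that $S$ is proper and $|S|\ge 5$. This works because a non-facial side contains at least one vertex besides $C$, and the complement of $S$ is nonempty, since the other side also contains a vertex.

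The main obstacle is $1\Rightarrow 2$ and its converse. The difficulty is correctly identifying the faces $f,g$ at a 2-cut and showing that the resulting curve really separates vertices of $R$, not just edges, when $G$ has parallel-looking structure. Simplicity of $G$ and the hypothesis $|V|\ge4$ are what exclude the degenerate configurations.
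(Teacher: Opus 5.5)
The paper gives no proof of this statement; it is quoted from Mohar--Thomassen. Your plan nevertheless follows the template the paper uses for its projective analogue, Theorem~\ref{thm:3connected}. Your $2\Rightarrow3$ is the maximal-subquadrangulation argument, the counting half of $3\Rightarrow1$ is the ``cycle plus one side'' argument, and your Jordan-curve proof of $1\Rightarrow2$ is correct.

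The gap is that $3\Rightarrow1$ rests entirely on one claim: a $2$-cut $\{v,w\}$ yields faces $f\neq g$, both incident with $v$ and $w$, such that $vfwg$ is not facial. You only assert this, and flag it yourself as the main obstacle. The claim is true; the missing piece is a rotation argument at $v$:
\begin{enumerate}
\item Let $K_1,K_2$ be two components of $G-\{v,w\}$. By $2$-connectivity $v$ has a neighbour in each, since otherwise $w$ would be a cut vertex.
\item Label every edge at $v$ by the component containing its other end, or by $w$. Fix edges $e_1,e_2$ at $v$ going into $K_1$ and $K_2$.
\item Each of the two arcs of the rotation at $v$ between $e_1$ and $e_2$ contains an angle whose two bounding edges carry different labels. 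Let $f$ and $g$ be the faces at these two angles. Since $G$ is $2$-connected, every face is bounded by a cycle and occupies at most one angle at $v$, so $f\neq g$.
\item The boundary cycle of $f$ contains a path $xvy$ with $x,y$ carrying different labels. The rest of that cycle is an $x$--$y$ path avoiding $v$, so it passes through $w$ (trivially if $x=w$ or $y=w$). The same holds for $g$.
\item Thus $C=vfwg$ is a $4$-cycle of $R$ that meets $G$ only in $v$ and $w$, and it separates $e_1$ from $e_2$ locally at $v$. Hence the far endpoints of $e_1$ and $e_2$, which are not on $C$, lie on opposite sides of $C$, so $C$ is not facial.
\end{enumerate}
In the proof of $a)\Rightarrow b)$ of Theorem~\ref{thm:3connected}, the paper handles the analogous step differently. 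It takes the union of the faces corresponding to edges with an endpoint in one component of the cut, and shows that its boundary is a single non-facial $4$-cycle. Either route closes your gap.

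Two smaller corrections. First, the hypothesis $|V|\ge4$ plays no role in $1\Rightarrow2$. It is needed exactly in $3\Rightarrow1$, to extract a $2$-cut from the failure of $3$-connectivity; $K_3$ shows it cannot be dropped there. Second, in $2\Rightarrow3$, ``each extra block loses a $4$'' is false for bridge blocks: a $K_2$ block has $1>2\cdot2-4$ edges. The correct count is $|E(R[S])|\le 2|S|-2c-b_1-2b_2$, where $c$ is the number of components, $b_1$ the number of bridges and $b_2$ the number of $2$-connected blocks. For $|S|\ge5$, equality forces $R[S]$ to be a single $2$-connected block, so your conclusion survives.
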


We wish to show a similar result for strongly involutive self-dual graphs. Note that we do not assume that our graphs are simple. We start on the NBQP side and then we use Theorem \ref{thm:commute} to go back to the SISD world. Suppose $Q$ is an NBQP map. As we have seen $\diag(Q)$ is always 2-connected. Even if we assume that $Q$ is 3-connected $\diag(Q)$ might not be, see Figure \ref{fig:daignotconnected}. On the other hand, it is possible to describe the cases when $\diag(Q)$ is 3-connected using separating cycles.

\begin{figure}[!h]
    \centering

  \begin{subfigure}[c]{0.3\textwidth}
    \centering
    \includegraphics[width=0.75\linewidth]{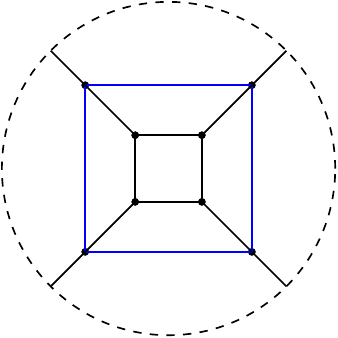}
    \caption{A 3-connected NBQP $Q$ with a separating 4-cycle.}
    \label{fig:daignotconnecteda}
  \end{subfigure}
  \begin{subfigure}[c]{0.3\textwidth}
    \centering
    \includegraphics[width=0.75\linewidth]{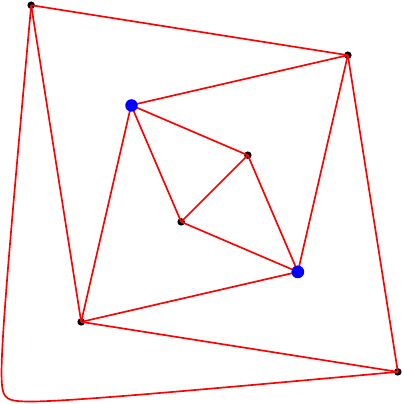}
    \caption{The diagonal graph of $Q$ is not 3-connected.}
    \label{fig:daignotconnectedb}
  \end{subfigure}
  \begin{subfigure}[c]{0.3\textwidth}
    \centering
    \includegraphics[width=0.75\linewidth]{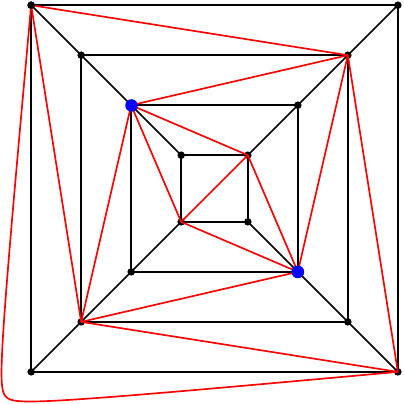}
    \caption{The diagonal graph on the double cover.}
    \label{fig:daignotconnectedc}
  \end{subfigure}

\caption{A 3-connected quadrangulation with a non-3-connected diagonal graph.}
    \label{fig:daignotconnected}
\end{figure}

Now we show Theorem \ref{thm:3connected}. That is, we show that for an NBQP map $Q$ the following are equivalent:
     \begin{enumerate}
         \item[a)] $Q$ has no separating 4-cycle,
         \item[b)] $\diag(Q)$ is 3-connected,
         \item[c)] for every proper subset $S\subset V$ that contains at least 4 vertices, we have $|E(Q[S])|< 2|S|-2$.
     \end{enumerate}

\begin{proof}[Proof of Theorem \ref{thm:3connected}]
   $b)\Rightarrow a)$  By definition $\diag(Q)$ is one of the diagonal graphs of the cover $\widetilde Q$. Suppose $\diag(Q)$ is 3-connected. As it is self-dual and 3-connected, it is simple (Lemma \ref{lem:selfdualsimple}). Hence, by Theorem \ref{thm:radialprops}, $\widetilde{Q}$ is separating 4-cycle free. This implies that $Q$ is separating 4-cycle free.

    $a)\Rightarrow b)$ In the other direction, let $G_1$ and $G_2$ be the two lifted copies of $\diag(Q)$ on vertex sets $V_1=\{i_1\mid i\in V(Q)\}$ and $V_2=\{i_2\mid i\in V(Q)\}$, respectively. Suppose there is a 2-cut of $G_1$ given by vertices $i_1,j_1$. Let $V'\subset V_1$ denote the vertices in one of the connected components after deleting $i_1,j_1$ from $G_1$. Consider the set $\mathcal A$ of quadrangles that have a diagonal spanned by $V'\cup\{i_1,j_1\}$, but the diagonal is not $i_1j_1$. The set $\mathcal{A}$ is non-empty as it contains all quadrangles incident to any vertex $v\in V'$. As $\widetilde Q$ is bipartite, with color classes $V_1,V_2$, on the boundary of $\cup \mathcal A$ the vertices alternate between $V_1$ and $V_2$. On the other hand, $i_1,j_1$ are the only possible vertices on the boundary from $V_1$, otherwise we would have a diagonal in $G_1$ leaving the connected component $V'$. As $V'$ does not contain all the vertices of $V_1$, the boundary of $\cup\mathcal A$ is not empty. Hence, it contains $i_1$ or $j_1$ and these are the only vertices of $V_1$ that are on the boundary. As $\widetilde Q$ is simple, any vertex of the boundary that is from $V_2$ must be connected to two different vertices from $V_1$. These can only be $i_1$ and $j_1$. Hence, $\cup \mathcal A$ can be decomposed into simply connected parts $P_1,\dots, P_k$ such that each $P_i$ is the union of some quadrangles of $\widetilde Q$, each is bounded by a 4-cycle of the form $a_2i_1b_2j_1$, and the interiors of the sets $P_i$ are disjoint. None of these parts can be a single quadrangle of $\widetilde Q$ as we excluded quadrangles from $\mathcal A$ that have $i_1j_1$ as a diagonal. Hence, each $P_i$ contains at least one vertex of $V'$ in its interior. As each $P_i$ is the union of quadrangles, no diagonal runs between vertices in the interior of different $P_i$'s. As $V'$ spans a connected subgraph of $\diag(Q)$, this implies that we cannot have more than one part. Hence, $\cup\mathcal A=P_1$, i.e. the boundary is a quadrangle, which corresponds to a non-facial 4-cycle in $\widetilde Q$ that contains exactly the vertices in $V'$. Projecting back to $\Pb^2$ gives a separating cycle of $Q$.

    $c)\Rightarrow a)$ Assume there is a separating 4-cycle $C$. By Lemma \ref{lem:cyclepar}, $C$ must be contractible. Hence it cuts $\Pb^2$ into a disk and a cross-cap. We can take $S$ to be the vertices of $C$ together with all the vertices in the cross-cap part. Let $Q_2$ be the quadrangulation of $\Pb^2$ given by the vertices $S$ and the edges of $Q$ that run in the cross-cap part. By Lemma \ref{lem:2nminus2isquad}  $Q_2$  has $2|S|-2$ edges, and hence $Q[S]$ has at least $2|S|-2$ edges.

    $a)\Rightarrow c)$  By Lemma \ref{lem:2nminus2isquad} we have $|E(Q[S])|\le 2|S|-2$. Hence, if the inequality fails for an $S$, then we must have $|E(Q[S])|= 2|S|-2$. Lemma \ref{lem:maximal} implies that $Q[S]$, with the inherited embedding, is an NBQP. Hence, the face of $Q[S]$ containing some vertex of $V(Q)\setminus S$ gives a separating 4-cycle. 
    
\end{proof}

Combining Theorem \ref{thm:3connected} and Theorem \ref{thm:commute} gives Corollary \ref{col:3conequiv}. That is, if $G$ is a 2-connected SISD map, then the following are equivalent:
    \begin{enumerate}
        \item $G$ is 3-connected.
        \item $\overline R(G)$ has no separating 4-cycle.
        \item For every proper subset $S\subset V(\overline R(G))$ that contains at least 4 vertices, we have $|E(\overline R(G)[S])|< 2|S|-2$.
    \end{enumerate}

This is exactly the desired analog of Theorem \ref{thm:radial3con}. Next we discuss when an NBQP graph has a unique NBQP embedding. By Whitney’s theorem, every simple 3-connected planar graph has a unique planar map, i.e. all embeddings are topologically equivalent \cite{whitney19332}. A nice analog of this result is due to Negami \cite{NE92}.

\begin{theorem}[Theorem 2 in \cite{NE92}]
Let $G$ be a 3-connected graph which is projective-planar but is not planar. If $G$ can be embedded in the projective plane so that it has only even faces, then such an embedding is unique. 
\end{theorem}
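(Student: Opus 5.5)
The plan is to pass from the projective embedding to the spherical cover, where Whitney's theorem is available. Let $\varphi_1,\varphi_2$ be two embeddings of $G$ in $\Pb^2$ with all faces even. Since $G$ is not planar, neither embedding can have all cycles contractible; otherwise it would be a planar embedding with a cross-cap glued into one cell. So both embeddings contain an essential cycle.

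The first step is to show that both embeddings are quadrangulation-like enough that Lemma \ref{lem:cyclepar} applies. Every face is even, so by that lemma the contractible cycles are exactly the even cycles. An essential cycle $C$ is therefore not homotopic to a facial cycle. I would argue that $C$ is odd as follows. If $C$ were even, then every cycle would be even, so $G$ would be bipartite. In a bipartite graph embedded in $\Pb^2$ with a 2-cell embedding, the lift is two disjoint copies, i.e.\ the double cover is disconnected. But the lift of an essential cycle connects the two preimages of a vertex, a contradiction. Hence $G$ is non-bipartite, and in each embedding a cycle is essential if and only if it is odd. The key consequence is that the notion of \emph{essential} is determined by the abstract graph, independently of the embedding.

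Next I lift. By the argument of Lemma \ref{lem:bqlift}, which uses only even faces, non-bipartiteness and Lemma \ref{lem:cyclepar}, each lift $\widetilde{\varphi}_i$ is a planar embedding of the same graph $G\times K_2$, and the deck involution $\iota$ acts as the antipodal map in both. I then need $G\times K_2$ to be 3-connected. For this I would use that $G$ is 3-connected and that covers of 3-connected non-bipartite graphs remain 3-connected. Concretely, suppose $\{x,y\}$ separates $G\times K_2$. Project to $G$: the images of $x,y$ form a set of at most two vertices, and its removal leaves $G$ connected and still containing an odd cycle. If removing the images leaves a connected non-bipartite graph, the preimage is connected, and $x,y$ remove even fewer vertices. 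The delicate case is when $G$ minus these vertices becomes bipartite, and I expect this to be the main obstacle. I would try to handle it via the odd essential cycles in the embedding: by face-width considerations, two vertices cannot meet all essential cycles when $G$ is 3-connected and non-planar. If this fails, I would fall back on a direct topological argument instead.

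Once $G\times K_2$ is simple and 3-connected, Whitney's theorem \cite{whitney19332} gives a homeomorphism $h$ of $\Sb^2$ carrying $\widetilde{\varphi}_1$ to $\widetilde{\varphi}_2$, realizing the identity graph isomorphism. Since $\iota$ is a graph automorphism, the two maps $h\alpha$ and $\alpha h$ realize the same combinatorial map, where $\alpha$ is the antipodal map. Using Lemma \ref{lem:straight}, in a form adapted to both embeddings, I would replace $h$ by an equivariant homeomorphism: redraw both lifts so that $\iota$ acts isometrically as $-\id$, then construct $h$ cell by cell on one representative of each antipodal pair of cells and extend it by symmetry. This equivariant $h$ descends to a homeomorphism of $\Pb^2$ carrying $\varphi_1$ to $\varphi_2$, which proves uniqueness.
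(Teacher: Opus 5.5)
Your overall strategy is the natural one: show that both lifts are the same cover, apply Whitney on the sphere, make the homeomorphism equivariant, and descend. The final step is fine. You do not even need Lemma~\ref{lem:straight}: define $h$ on the graph, note that no face is invariant under the fixed-point-free antipodal map, and extend over one face of each antipodal pair. There are, however, two genuine gaps.

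\textbf{The bipartite case.} Your derivation that $G$ is non-bipartite is wrong. The sentence ``a bipartite graph 2-cell embedded in $\Pb^2$ lifts to two disjoint copies'' confuses the topological lift with $G\times K_2$. The lift of a connected graph with an essential cycle is always connected, and it equals $G\times K_2$ only when ``essential'' coincides with ``odd'', which is exactly what you wanted to show.

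Two examples show this. Take $K_{3,3}$ drawn as a M\"obius ladder: hexagonal face $123456$, with rungs $14,25,36$ through the cross-cap. Take also $K_{3,4}$, obtained by adding a vertex joined to $1,3,5$ inside that hexagon, which gives a quadrangulation. Both are 3-connected, non-planar, projective-planar and bipartite. Every face is even, and $1456$ is an even essential cycle.

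For bipartite $G$, parity says nothing about which cycles are essential. Moreover, the strict uniqueness your argument delivers (a homeomorphism inducing the identity on $G$) actually fails there. We have $|\mathrm{Aut}(K_{3,3})|=72$, while automorphisms induced by homeomorphisms act freely on the $4|E|=36$ flags. So for some $\sigma$ the embeddings $\varphi$ and $\varphi\circ\sigma$ are not identity-equivalent. You must therefore either assume $G$ is non-bipartite (the case relevant to NBQP maps), or treat bipartite graphs separately with a weaker notion of uniqueness.

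\textbf{3-connectivity of $G\times K_2$.} This is the heart of the proof, and you leave it open. Worse, the claim you propose to use is false even in the non-bipartite, even-faced setting. That claim is that two vertices cannot meet all essential cycles of a 3-connected non-planar graph.

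Here is a counterexample. Take vertices $c_0,\dots,c_5,u,v$, the cycle $c_0c_1\cdots c_5$, the chord $c_2c_5$, and the edges $uv,uc_0,uc_1,uc_3,vc_0,vc_1,vc_4$. Use as faces the 4-cycles $c_2c_3c_4c_5$, $c_5c_0c_1c_2$, $c_0c_1vu$, $uvc_4c_3$, $c_1c_2c_3u$, $vc_1uc_0$ and $vc_0c_5c_4$.
\begin{itemize}
\item Each edge lies on exactly two of these 4-cycles, and the faces around every vertex form a single cycle. Since $8-14+7=1$ and $uc_0c_1$ is a triangle, this is an NBQP.
\item The graph is 3-connected.
\item It is non-planar: the bridge $\{c_2,\dots,c_5\}$ attaches to all four vertices of the $K_4$ on $u,v,c_0,c_1$.
\item Yet $G-\{u,v\}$ is bipartite, so $\{u,v\}$ meets every essential cycle.
\end{itemize}

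The lift is nonetheless 3-connected, but for a different reason, and this is what you need to prove. Suppose $\{x,y\}$ separates the lift, and let $u,v$ be their projections. If $u=v$, then $G-u$ has no essential cycle. If $u\neq v$, the lift of $G-\{u,v\}$ must split into two sheets that $\iota x$ and $\iota y$ fail to join, even though each has at least two neighbours there. Chasing the cases shows that $G-uv$ then has no essential cycle. In either case that subgraph lies in a disk. The star at $u$, respectively the edge $uv$, can then be redrawn inside the face containing the cross-cap, which would make $G$ planar.
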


We show a variant of this for NBQP maps.

\begin{lemma}    \label{lem:uniquestrduality}
     If an NBQP map $Q$ has no separating 4-cycle, then all NBQP embeddings of the graph of $Q$ are topologically equivalent.
\end{lemma}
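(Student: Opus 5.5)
The plan is to show that the set of faces of an NBQP embedding of $\gr(Q)$ is determined by the abstract graph alone, namely as the set of all $4$-cycles of $\gr(Q)$. Since a $2$-cell embedding is determined up to homeomorphism by its facial walks, this gives the claim.

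Let $H=\gr(Q)$ and let $Q'$ be any NBQP embedding of $H$.

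\textbf{Step 1 (transfer the hypothesis to $Q'$).} Condition c) of Theorem~\ref{thm:3connected} says that for every proper subset $S\subset V$ with $|S|\ge 4$ we have $|E(H[S])|<2|S|-2$. This is a statement about the graph $H$ only. By a)$\Rightarrow$c) applied to $Q$, it holds for $H$. By c)$\Rightarrow$a) applied to the NBQP map $Q'$, the embedding $Q'$ also has no separating $4$-cycle.

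\textbf{Step 2 (faces are all the $4$-cycles).} By definition, a separating $4$-cycle is a $4$-cycle that does not bound a face. Hence in both $Q$ and $Q'$ every $4$-cycle of $H$ bounds a face. Conversely, every face of a (simple) quadrangulation is bounded by a $4$-cycle, since each face has four distinct vertices and four edges. Moreover, a $4$-cycle bounds at most one face unless the whole surface consists of two faces glued along it, which is impossible in $\Pb^2$. So in both embeddings the faces are in natural bijection with the $4$-cycles of $H$, and the facial boundary of each face is the corresponding cycle.

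\textbf{Step 3 (reconstruct the surface).} Any $2$-cell embedding is homeomorphic to the CW complex obtained from the graph by attaching one closed disk along each facial walk. Here each facial walk is a simple $4$-cycle, so the attaching map is a homeomorphism from the boundary of the disk onto the cycle. Any two such attaching maps onto the same cycle differ by a homeomorphism of the circle, and this extends to the disk. Therefore the identity map of $H$ extends cell by cell to a homeomorphism from the surface of $Q$ to that of $Q'$. This is precisely a topological isomorphism $h$ with $h\circ\varphi=\varphi'$.

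The main point requiring care is Step 1: it relies on condition c) being a purely graph-theoretic characterization of separating-$4$-cycle-freeness. Theorem~\ref{thm:3connected} provides exactly this. A secondary point needing care is Step 3, where one must make sure that facial walks really are simple cycles, so that the attaching maps are unambiguous. This holds because all faces of a simple quadrangulation have four distinct vertices.
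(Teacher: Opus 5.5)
Your proof is correct, but it finishes along a different route from the paper.

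\textbf{Step 1.} The paper transfers separating-$4$-cycle-freeness to $Q'$ by counting. Every NBQP embedding of $\gr(Q)$ has exactly $|V|-1$ faces, each bounded by a distinct $4$-cycle. Since $Q$ has no separating $4$-cycle, $\gr(Q)$ has exactly $|V|-1$ four-cycles, so the faces of $Q'$ use up all of them. You instead use the graph-theoretic criterion c) of Theorem~\ref{thm:3connected}, in both directions. That is valid, though a bit heavier than the count, which needs only Euler's formula.

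\textbf{Final step.} From there the paper takes a detour through the correspondence:
\begin{enumerate}
\item $\gr(\diag(Q))\cong\gr(\diag(Q'))$, since both are obtained by taking both diagonals of every $4$-cycle.
\item This graph is $3$-connected by Theorem~\ref{thm:3connected} and simple by Lemma~\ref{lem:selfdualsimple}.
\item Whitney's theorem then makes $\diag(Q)$ and $\diag(Q')$ the same planar map.
\item Theorem~\ref{thm:commute} transports this back to $Q\cong Q'$.
\end{enumerate}
Your Step~3 bypasses all of this. Once the facial boundaries are known to be exactly the $4$-cycles of $\gr(Q)$ in both embeddings, and each is a simple cycle, the surface is recovered up to homeomorphism by gluing disks cell by cell.

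\textbf{Comparison.} Your argument is more elementary and self-contained. It needs neither the SISD--NBQP correspondence, nor Whitney's theorem, nor the b)$\Leftrightarrow$a) part of Theorem~\ref{thm:3connected}. It also gives the slightly sharper conclusion that the homeomorphism can be chosen to induce the identity on $\gr(Q)$. The paper's route is chosen to present the lemma as the projective analogue of Whitney's theorem, transported through the dictionary of Table~\ref{tab:nbqp-sisd-dictionary}.
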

\begin{proof}
    Suppose $Q$ is an NBQP embedding without separating 4-cycles and $Q'$ is another NBQP embedding of $\gr(Q)$, the graph of $Q$. As $Q$ has no separating 4-cycles, the set of 4-cycles in $\gr(Q)$ is in bijection with the $|V|-1$ faces of $Q$. In $Q'$ we also have exactly $|V|-1$ faces, hence the 4-cycles of $\gr(Q)$ must also be in bijection with the faces of $Q'$. Hence $Q'$ is also separating 4-cycle free.  Furthermore,  $\gr(\diag(Q))$ and $\gr(\diag(Q'))$ are isomorphic graphs, both can be constructed by taking diagonals in each 4-cycle of $\gr(Q)$. 
    
     By Theorem \ref{thm:3connected} we know that $\gr(\diag(Q))$ and  $\gr(\diag(Q'))$ are 3-connected. By Lemma \ref{lem:selfdualsimple} they are also simple. By Whitney's theorem they have a unique planar embedding, hence $\diag(Q)$ and $\diag(Q')$ are the same planar map. By Theorem \ref{thm:commute} this implies that $Q$ and $Q'$ are topologically equivalent.

\end{proof}

This also implies that if we have an NBQP graph, then either all NBQP maps of it have separating 4-cycles, or none of them have.

\subsection{A connection to parity embeddings}

 A simple graph embedded in the projective plane is a \emph{parity embedding} when a simple cycle is essential if and only if it is of odd length. By Lemma \ref{lem:cyclepar}  submaps of NBQP maps are parity embeddings. We show that the reverse also holds. We consider this a folklore result. We will later need the following form, and the author could not find  this exact form in the literature, hence we include a proof. The idea is obvious: if we have a parity embedding, we keep adding edges (and vertices) until it becomes a NBQP map.

\begin{lemma}[Folklore]\label{lem:eqmaxcases}
    A simple graph $Q$ has a parity embedding in the projective plane if and only if it is a subgraph of an NBQP graph.
\end{lemma}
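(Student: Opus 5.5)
The easy direction ($\Leftarrow$) comes first. Suppose $Q$ is a subgraph of an NBQP graph $Q^+$. Fix an NBQP embedding of $Q^+$ and restrict it to $Q$. Every simple cycle of $Q$ is a cycle of $Q^+$ with the same curve in $\Pb^2$. By Corollary~\ref{cor:nbqp-parity} such a cycle is essential exactly when it has odd length. So the inherited embedding is a parity embedding.

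For the converse ($\Rightarrow$), I would take a parity embedding of $Q$ and enlarge it step by step. Each step adds vertices and edges drawn inside faces, and each step must keep the drawing simple and keep the parity property, until the result is a quadrangulation. The parity property is the invariant I need. First, it forbids cells of size $1,2,3$ bounded by simple cycles, since a facial cycle is contractible and therefore has even length. Second, the final quadrangulation must be non-bipartite: a 2-cell embedding in $\Pb^2$ contains an essential cycle, and by parity that cycle is odd. Lemma~\ref{lem:2nminus2isquad} then identifies the stopping point. With all cells of size at least $4$ we have $|E|\le 2|V|-2$, and equality holds exactly for quadrangulations, so I keep adding until equality is reached.

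The plan has three stages.
\begin{enumerate}
\item If $Q$ has no essential cycle, add a new odd essential cycle, say a triangle drawn through the cross-cap inside one face and attached by a path to $Q$. This preserves parity: any new cycle meeting the triangle is checked by its homotopy class. If $Q$ is disconnected, join its components by paths inside faces.
\item Now the graph is connected and contains an essential cycle, so every face is a disk. Make every face boundary a simple cycle by adding pendant structures or extra paths inside faces. Then every face has even length at least $4$.
\item Split each face of length $2k\ge 6$ into two faces by adding a path of length $1$ or $2$ between suitable boundary vertices, chosen so that both new faces are even. Adding a chord $uv$ or a path $u\,w\,v$ inside a disk face preserves parity, because any new cycle through the added path is homotopic to one formed with a boundary arc of the face, and the evenness of the boundary lets the lengths match. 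When a chord would create a parallel edge, use a path through a new vertex $w$ instead.
\end{enumerate}

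The main obstacle is that face boundaries need not be simple cycles. Cut vertices, and disk faces whose boundary walk repeats vertices, make it harder both to control parity and to keep the graph simple. To deal with this, I would first reduce to the 2-connected case. I would add paths between boundary occurrences of a repeated vertex, or between different blocks, choosing a length of $1$ or $2$ so that parity is respected. Only after that would I run the face-splitting induction, measuring progress by the quantity $2|V|-2-|E|$, which strictly decreases at each splitting step.
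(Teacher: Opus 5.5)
The direction ($\Leftarrow$) is fine and matches the paper. Stage 1 and the chord case of Stage 3 are also fine. The converse, however, has a genuine gap at Stage 2. That stage carries the whole difficulty of the lemma, and you only assert it.

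Reducing to the 2-connected case does not give simple facial walks in $\Pb^2$. A triangle embedded as an essential curve is 2-connected and has one block, yet its only face is a disk whose boundary walk $c_1c_2c_3c_1c_2c_3$ visits every vertex twice. The tools you name do not help either:
\begin{itemize}
\item A pendant structure is traversed twice by the facial walk, so it makes the walk less simple, not more.
\item A path of length $1$ or $2$ between two occurrences of the same vertex $v$ is a loop or a double edge $vw,wv$.
\item You give no rule for choosing endpoints between blocks that avoids existing edges.
\end{itemize}
There is also no progress measure for this stage. A path of length $2$ through a new vertex leaves $2|V|-2-|E|$ unchanged, so your claim that this quantity strictly decreases at every step is false. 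Nothing else in the proposal measures how far the facial walks are from being simple.

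Your fallback in Stage 3 has a parity slip as well. Suppose a chord $uv$ would split a $2k$-face into even faces of lengths $\ell_1+1$ and $\ell_2+1$. Then the path $u\,w\,v$ gives faces of lengths $\ell_1+2$ and $\ell_2+2$, both odd, which breaks the invariant you are maintaining. Stage 3 itself is easy to repair. For a disk face bounded by a simple cycle $v_1\dots v_{2k}$ with $k\ge 3$, some pair at odd distance at least $3$ along the cycle is non-adjacent. Otherwise these chords plus the cycle would form $K_{k,k}$, all of whose cycles are even and hence contractible, which would make $K_{k,k}$ planar. So chords alone suffice, and each one decreases $2|V|-2-|E|$.

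The missing idea is the paper's ring construction, which removes repeated vertices, repeated edges and parallel-edge problems in one stroke. Take a cell $f$ with boundary walk $v_1\dots v_{2k}$, where $2k\ge 6$ or $f$ is a cross-cap cell with $2k=4$.
\begin{itemize}
\item Place new vertices $w_1,\dots,w_{2k}$ inside $f$, with $w_i$ next to the $i$-th occurrence $v_i$, and add the edges $v_iw_i$ and $w_iw_{i+1}$.
\item This produces quadrangles $v_iv_{i+1}w_{i+1}w_i$ with four distinct vertices, however often the walk repeats. It also leaves a new cell $f'$ bounded by the simple cycle $w_1\dots w_{2k}$, whose vertices are not adjacent to each other outside $f'$.
\item If $f'$ is a disk, the chords $w_1w_4,w_1w_6,\dots,w_1w_{2k-2}$ quadrangulate it without creating parallel edges.
\item If $f'$ is a cross-cap, first add $w_1w_3$ so that $w_1w_2w_3$ is essential. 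This turns $f'$ into a disk and supplies an odd cycle; then repeat the construction on it.
\end{itemize}
You also do not need to maintain the parity property step by step. Parity is used only for two facts: every facial walk is contractible and so has even length, and any essential cycle of the original graph is odd. Together these make the final simple quadrangulation non-bipartite.
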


\begin{proof}
    Suppose $Q$ is a subgraph of an NBQP map. It is enough to consider the case where $Q$ itself is an NBQP embedding. The boundary of a quadrangle is contractible, and hence all contractible cycles must be even cycles by Lemma \ref{lem:cyclepar}. As an NBQP graph is not bipartite, we must have an odd cycle, and it must be essential. Using Lemma \ref{lem:cyclepar} again, all essential cycles must have an odd length.   

    In the other direction, assume that $Q$ is a parity embedding in the projective plane. If $Q$ is an NBQP embedding, we are done. Otherwise, we will add edges until we reach an NBQP embedding. First, if there are multiple components of $Q$, we draw edges between them to make  $Q$ connected. As no new cycles are introduced, this does not change that we have a parity embedding and the graph remains simple. 

    Hence, we might assume that $Q$ is connected, which implies that the interior of each cell is homeomorphic to an open disk or an open cross-cap.  In particular, each cell has a single boundary walk and that walk is a contractible cycle of $\Pb^2$. As we have a parity embedding, this implies that the boundary walk has even length. 

    Next, we add edges until each cell has size 4 and each becomes homeomorphic to a disk. The plan is simple; if we have a cell $f$ of size at least 6, we add diagonals to it to break it into quadrangles. Since we want simple quadrangulations, we have to be a bit more careful than this. Note, for example, that we did not assume anything about the embedding of $Q$. It is possible that a vertex or even an edge appears more than once on the boundary of $f$. We will avoid these problems by adding some extra vertices and edges. 
    
    Let $f$ be a cell of length $2k\ge 6$, or a cross-cap cell of length 4. Let $v_1,v_2,\dots, v_{2k}$ denote the boundary walk of $f$. We introduce new vertices $w_1,\dots, w_{2k}$ and place $w_i$ close to $v_i$ within $f$. Then we connect $v_i$ to $w_i$ by a short curve, and we also connect $w_i$ to $w_{i+1}$ by drawing an edge close to the boundary of $f$, see Figure \ref{fig:toquadrangulation}. These new edges cannot be loop edges, and they cannot create parallel edges (unless $Q$ is a single edge). 

    We have formed a sequence of quadrangles around $f$ and the cycle $w_1\dots w_{2k}$ bounds a new cell $f'$ of size $2k$. This cell is nice in the sense that every vertex appears at most once on the boundary of $f'$ and no two vertices of the boundary are connected outside $f'$. This also implies that $f'$ is homeomorphic to either a disk or a cross-cap. 

    If $f'$ is a disk, then we can add the edges $w_1w_4$, $w_1w_6,\dots, w_1w_{2k-2}$ to break $f'$ into quadrangles while preserving that the graph is simple, see Figure \ref{fig:toquadrangulationa}. If $f'$ is a cross-cap, we can add the edge $w_1w_3$ such that $w_1w_2w_3$ forms an essential cycle, see Figure \ref{fig:toquadrangulationb}. This will not divide the face into two faces, but it changes $f'$ so that it will be homeomorphic to a disk. Applying the whole procedure again, we can divide the face into quadrangles. Repeating this for each large face results in an embedding where each cell is of size four and each is homeomorphic to a disk. Hence, we have extended the embedding to a simple quadrangulation of the projective plane.

    If $Q$ was originally bipartite, then one of the faces was homeomorphic to a cross-cap, and we made the graph non-bipartite by creating the $w_1w_2w_3$ cycle. Hence, the resulting quadrangulation is an NBQP.

    \begin{figure}[!ht]
        \centering

    \begin{subfigure}[t]{0.48\textwidth}
    \centering
    \includegraphics[width=0.99\linewidth]{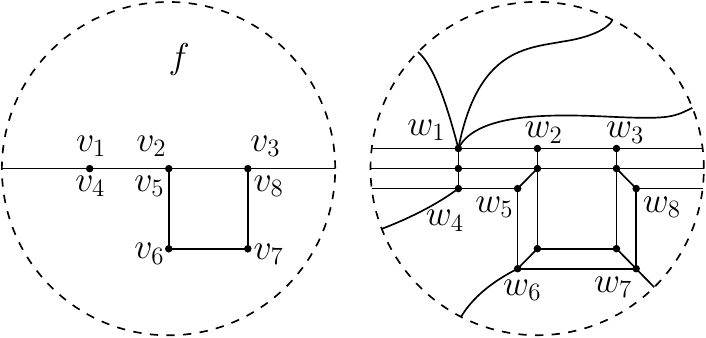}
    \caption{Extending $f'$ when it is a disk.}
    \label{fig:toquadrangulationa}
  \end{subfigure}
    \begin{subfigure}[t]{0.48\textwidth}
    \centering
    \includegraphics[width=0.99\linewidth]{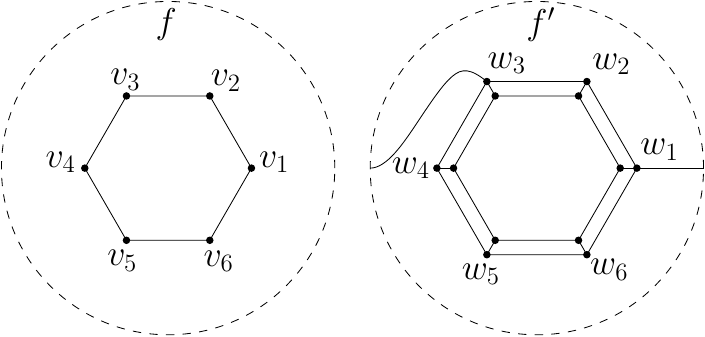}
    \caption{Turning $f'$ into a disk when it is a cross-cap.}
    \label{fig:toquadrangulationb}
  \end{subfigure}
        \caption{Quadrangulating a face of a parity embedding.}
        \label{fig:toquadrangulation}
    \end{figure}

\end{proof}

\section*{Declaration of AI usage}

\begin{center}
\begin{tabular}{l c}
\hline
\textbf{Activity} & \textbf{AI used} \\
\hline
Asking the research question     & No \\
Coming up with the approach      & No \\
Proof development                & No \\
Writing and exposition           & Yes \\
Checking for mistakes                & Yes \\
Other supporting tasks           & Yes \\
\hline
\end{tabular}
\end{center}

\paragraph{Details.}

The mathematical results are due to the author. That is, every new theorem was originally proved by the author, and a proof for each was originally written by the author. Then, AI was extensively used to clarify the ideas in the paper and to improve the presentation. This includes simplifying some of the original proofs and catching numerous inaccuracies. In some cases, the AI also suggested better definitions and clearer arguments. The AI also helped the author realize that some of the ideas have already appeared in the literature.  AI help was also used for creating some of the figures, especially the ones in the Appendix.

\section*{Acknowledgements}
I thank Andreas Holmsen, Konrad Swanepoel, László Lovász, Tibor Jordán, Márton Naszódi and Dömötör Pálvölgyi for helpful discussions and valuable comments. I am forever grateful to my wife Anna for her continuous support. 

\bibliographystyle{plain} 
\bibliography{sample}

\appendix
\section{Examples}\label{app:examples}

In this appendix, we collect several examples illustrating the NBQP-SISD correspondence. 

\begin{figure}[htbp]
    \centering

    \begin{minipage}[t]{0.48\textwidth}
        \centering
        \includegraphics[width=0.7\linewidth]{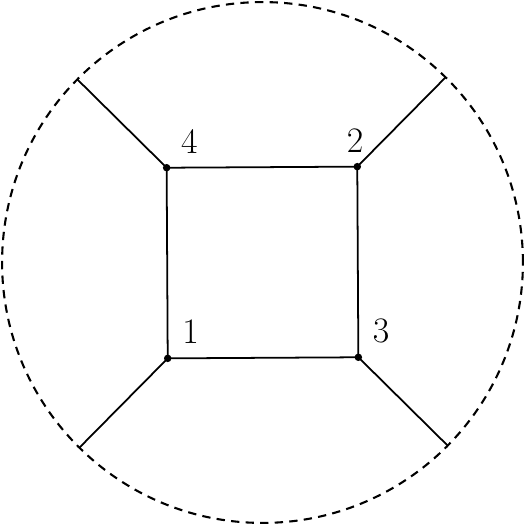}
        \par\smallskip
        (a) NBQP map
    \end{minipage}
    \hfill
    \begin{minipage}[t]{0.48\textwidth}
        \centering
        \includegraphics[width=0.7\linewidth]{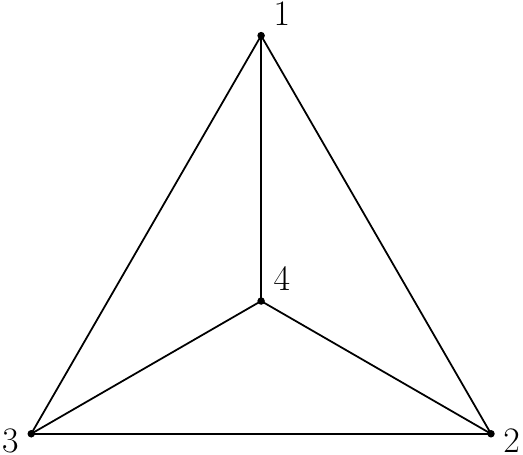}
        \par\smallskip
        (b) SISD map
    \end{minipage}

    \caption*{Example A. The unique NBQP and SISD map on 4 vertices. The two graphs are isomorphic as abstract graphs, but this is just happenstance. }
    \label{fig:example-A}
\end{figure}

\begin{figure}[htbp]
    \centering

    \begin{minipage}[t]{0.48\textwidth}
        \centering
        \includegraphics[width=0.7\linewidth]{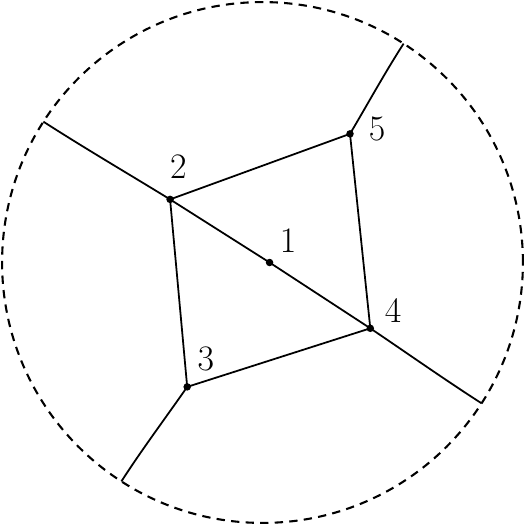}
        \par\smallskip
        (a) NBQP map
    \end{minipage}
    \hfill
    \begin{minipage}[t]{0.48\textwidth}
        \centering
        \includegraphics[width=0.7\linewidth]{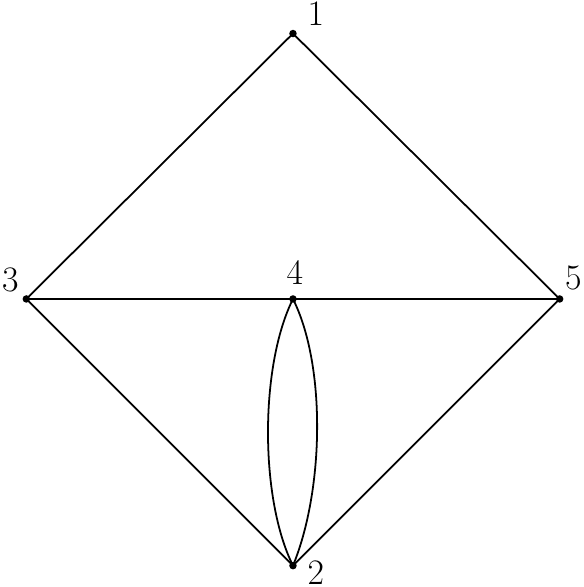}
        \par\smallskip
        (b) SISD map
    \end{minipage}

    \caption*{Example B. The unique NBQP and SISD map on 5 vertices.}
    \label{fig:example-B}
\end{figure}

\begin{figure}[htbp]
    \centering

    \begin{minipage}[t]{0.48\textwidth}
        \centering
        \includegraphics[width=0.7\linewidth]{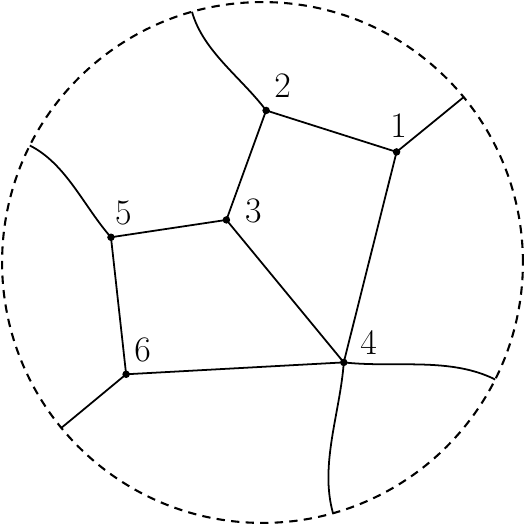}
        \par\smallskip
        (a) NBQP map
    \end{minipage}
    \hfill
    \begin{minipage}[t]{0.48\textwidth}
        \centering
        \includegraphics[width=0.7\linewidth]{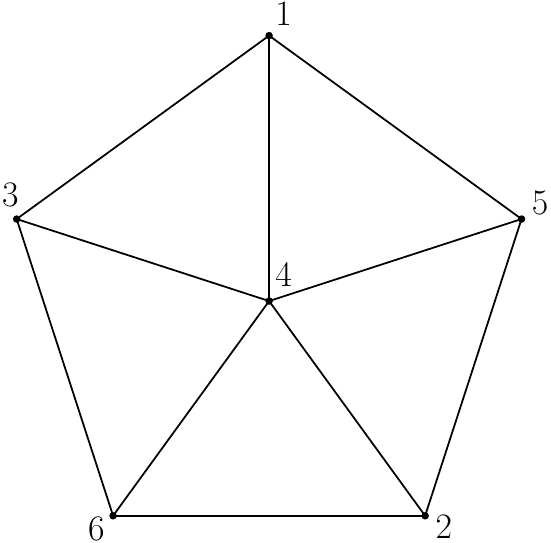}
        \par\smallskip
        (b) SISD map
    \end{minipage}

    \caption*{Example C. The unique separating 4-cycle free NBQP on 6 vertices and the corresponding 3-connected SISD map. The two graphs are isomorphic as abstract graphs, but this is just happenstance.}
    \label{fig:example-C}
\end{figure}

\begin{figure}[htbp]
    \centering

    \begin{minipage}[t]{0.48\textwidth}
        \centering
        \includegraphics[width=0.7\linewidth]{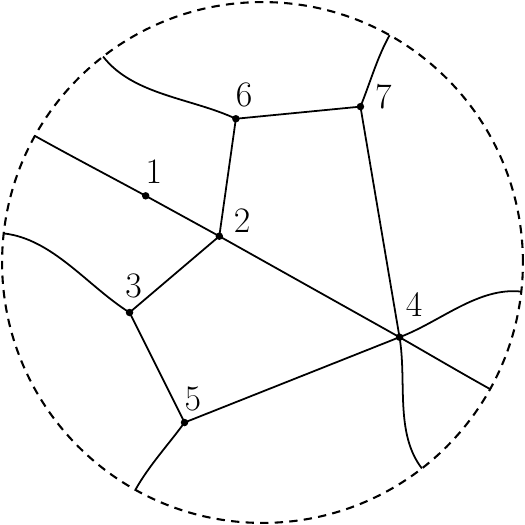}
        \par\smallskip
        (a) NBQP map
    \end{minipage}
    \hfill
    \begin{minipage}[t]{0.48\textwidth}
        \centering
        \includegraphics[width=0.7\linewidth]{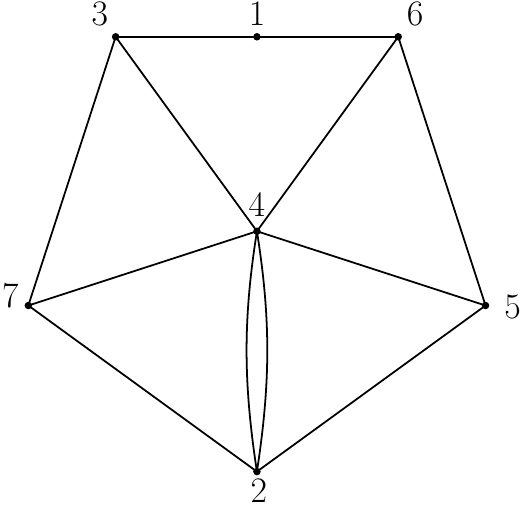}
        \par\smallskip
        (b) SISD map
    \end{minipage}

    \caption*{Example D. An NBQP on 7 vertices. The cycle 3-2-6-4 is a separating 4-cycle. The corresponding cut in the SISD map is $\{3,6\}$.}
    \label{fig:example-D}
\end{figure}

\begin{figure}[htbp]
    \centering

    \begin{minipage}[t]{0.48\textwidth}
        \centering
        \includegraphics[width=0.7\linewidth]{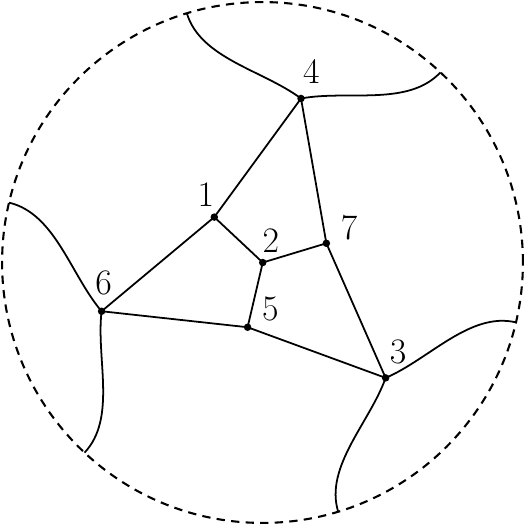}
        \par\smallskip
        (a) NBQP map
    \end{minipage}
    \hfill
    \begin{minipage}[t]{0.48\textwidth}
        \centering
        \includegraphics[width=0.7\linewidth]{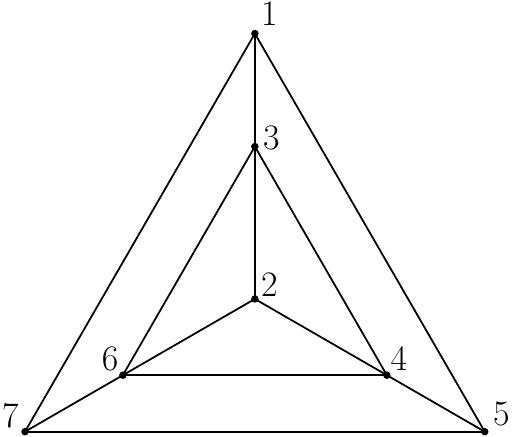}
        \par\smallskip
        (b) SISD map
    \end{minipage}

    \caption*{Example E. The unique separating 4-cycle free NBQP on 7 vertices and the corresponding 3-connected SISD map.}
    \label{fig:example-E}
\end{figure}

\begin{figure}[htbp]
    \centering

    \begin{minipage}[t]{0.48\textwidth}
        \centering
        \includegraphics[width=0.7\linewidth]{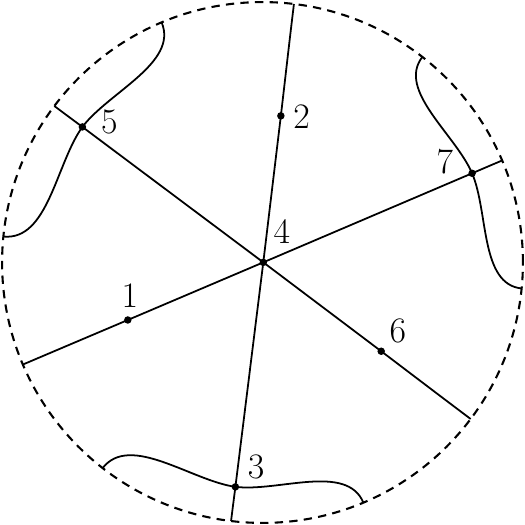}
        \par\smallskip
        (a) NBQP map
    \end{minipage}
    \hfill
    \begin{minipage}[t]{0.48\textwidth}
        \centering
        \includegraphics[width=0.7\linewidth]{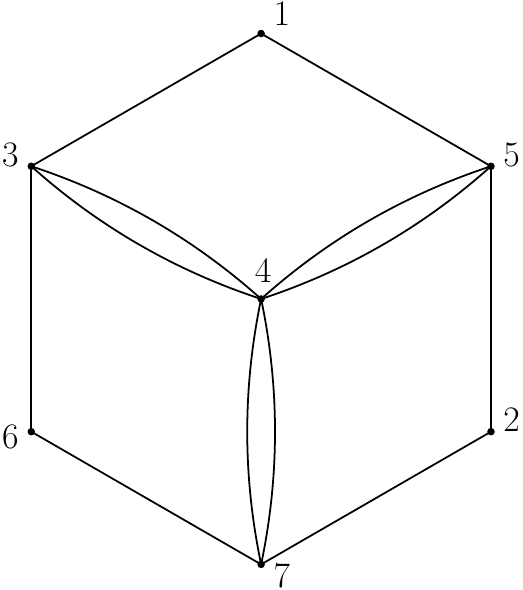}
        \par\smallskip
        (b) SISD map
    \end{minipage}

    \caption*{Example F. Another example on 7 vertices.}
    \label{fig:example-F}
\end{figure}

\begin{figure}[htbp]
    \centering

    \begin{minipage}[t]{0.48\textwidth}
        \centering
        \includegraphics[width=0.7\linewidth]{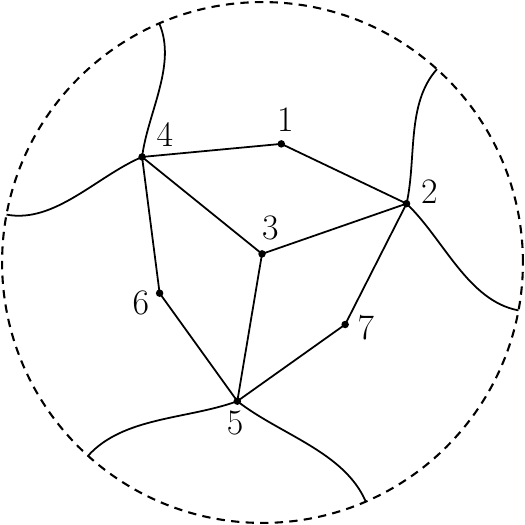}
        \par\smallskip
        (a) NBQP map
    \end{minipage}
    \hfill
    \begin{minipage}[t]{0.48\textwidth}
        \centering
        \includegraphics[width=0.7\linewidth]{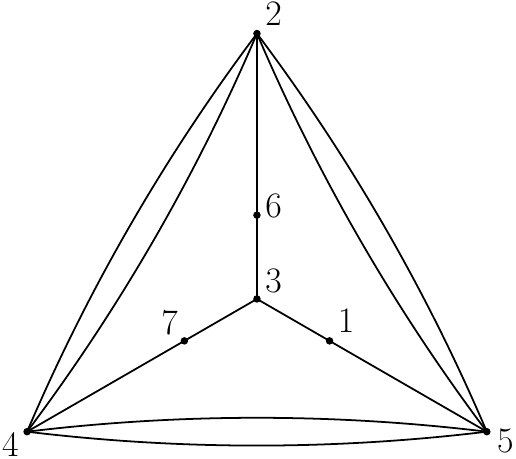}
        \par\smallskip
        (b) SISD map
    \end{minipage}

    \caption*{Example G. Another example on 7 vertices.}
    \label{fig:example-G}
\end{figure}

\begin{figure}[htbp]
    \centering

    \begin{minipage}[t]{0.48\textwidth}
        \centering
        \includegraphics[width=0.7\linewidth]{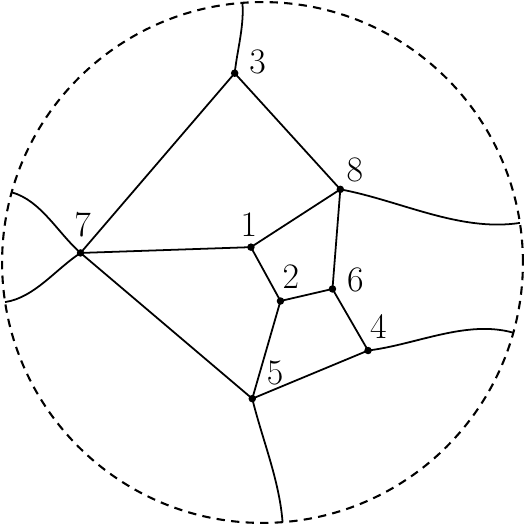}
        \par\smallskip
        (a) NBQP map
    \end{minipage}
    \hfill
    \begin{minipage}[t]{0.48\textwidth}
        \centering
        \includegraphics[width=0.7\linewidth]{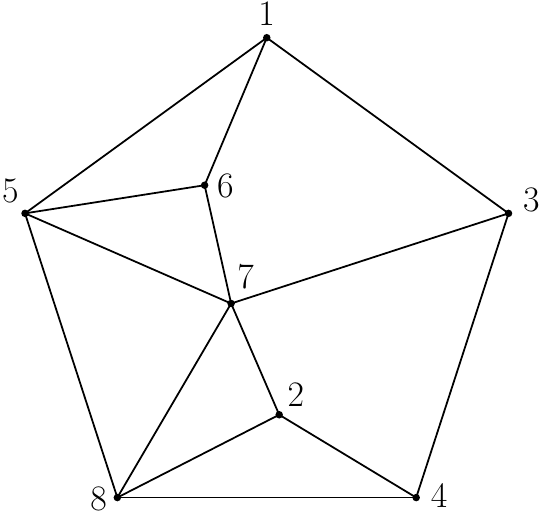}
        \par\smallskip
        (b) SISD map
    \end{minipage}

    \caption*{Example H. One of the two separating 4-cycle free NBQP maps on 8 vertices. The other one is a wheel graph.}
    \label{fig:example-H}
\end{figure}

\begin{figure}[htbp]
    \centering

    \begin{minipage}[t]{0.48\textwidth}
        \centering
        \includegraphics[width=0.7\linewidth]{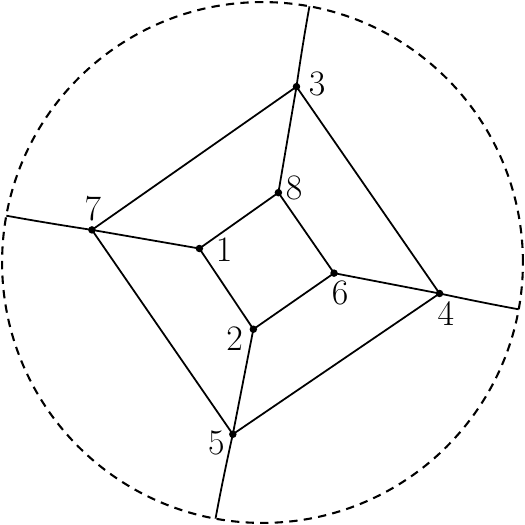}
        \par\smallskip
        (a) NBQP map
    \end{minipage}
    \hfill
    \begin{minipage}[t]{0.48\textwidth}
        \centering
        \includegraphics[width=0.7\linewidth]{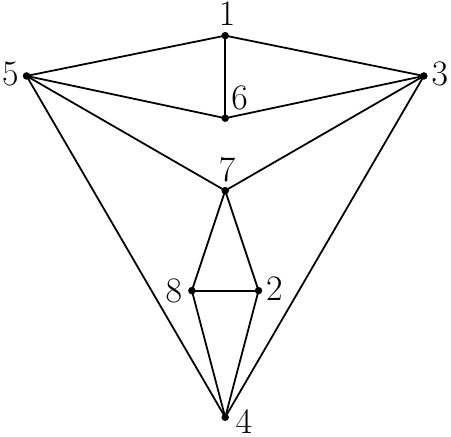}
        \par\smallskip
        (b) SISD map
    \end{minipage}

    \caption*{Example I. The smallest NBQP for which the minimum degree is at least 3, but we do have a separating 4-cycle. The corresponding SISD map does not have parallel edges, but it is not 3-connected.}
    \label{fig:example-I}
\end{figure}

\begin{figure}[htbp]
    \centering

    \begin{minipage}[t]{0.48\textwidth}
        \centering
        \includegraphics[width=0.7\linewidth]{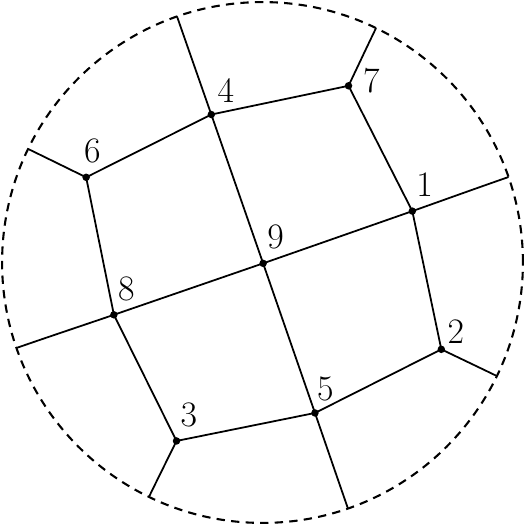}
        \par\smallskip
        (a) NBQP map
    \end{minipage}
    \hfill
    \begin{minipage}[t]{0.48\textwidth}
        \centering
        \includegraphics[width=0.7\linewidth]{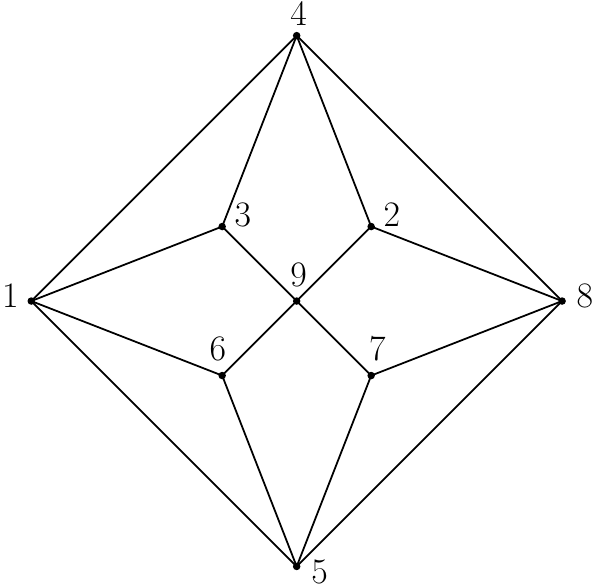}
        \par\smallskip
        (b) SISD map
    \end{minipage}

    \caption*{Example J. A highly symmetric example on 9 vertices.}
    \label{fig:example-J}
\end{figure}

\begin{figure}[htbp]
    \centering

    \begin{minipage}[t]{0.48\textwidth}
        \centering
        \includegraphics[width=0.7\linewidth]{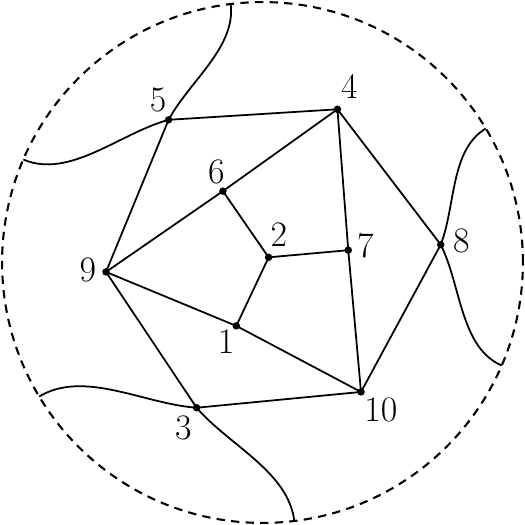}
        \par\smallskip
        (a) NBQP map
    \end{minipage}
    \hfill
    \begin{minipage}[t]{0.48\textwidth}
        \centering
        \includegraphics[width=0.7\linewidth]{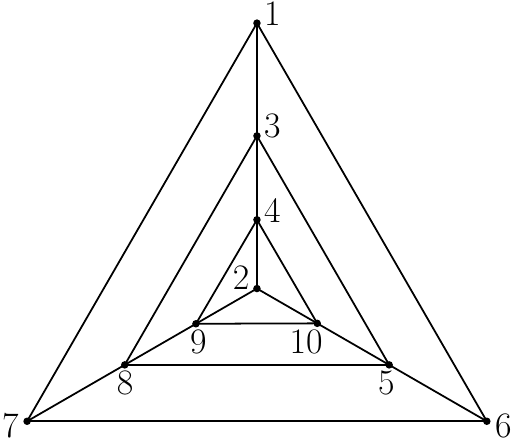}
        \par\smallskip
        (b) SISD map
    \end{minipage}

    \caption*{Example K. A highly symmetric example on 10 vertices.}
    \label{fig:example-K}
\end{figure}

\begin{figure}[htbp]
    \centering

    \begin{minipage}[t]{0.48\textwidth}
        \centering
        \includegraphics[width=0.7\linewidth]{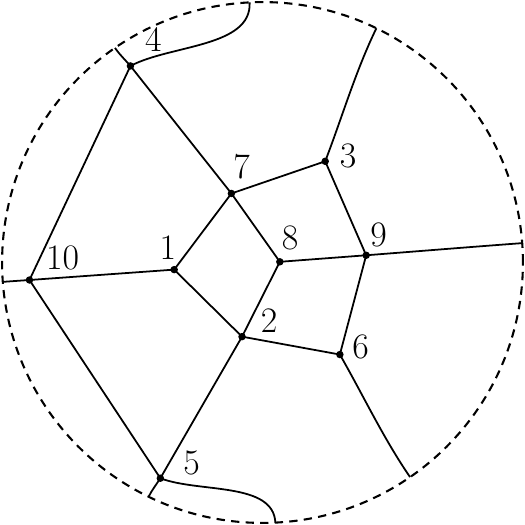}
        \par\smallskip
        (a) NBQP map
    \end{minipage}
    \hfill
    \begin{minipage}[t]{0.48\textwidth}
        \centering
        \includegraphics[width=0.7\linewidth]{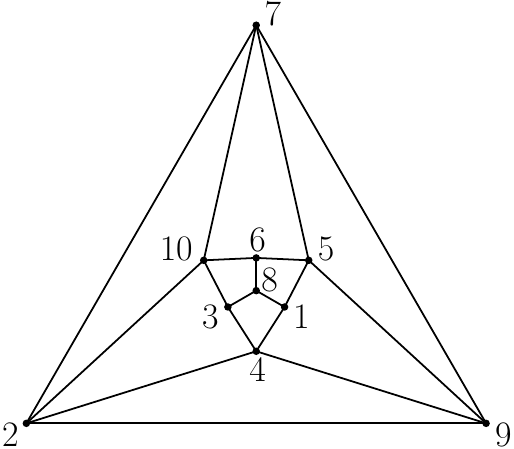}
        \par\smallskip
        (b) SISD map
    \end{minipage}

    \caption*{Example L. Another highly symmetric example on 10 vertices.}
    \label{fig:example-L}
\end{figure}

\end{document}